\def\N{\mathbb{N}}
\def\Z{\mathbb{Z}}
\def\Q{\mathbb{Q}}
\def\R{\mathbb{R}}
\def\C{\mathbb{C}}
\def\T{\mathbb{T}}
\def\Id{\operatorname{Id}}
\def\Diff{\operatorname{Diff}}
\def\Fix{\operatorname{Fix}}
\def\LCM{\operatorname{LCM}}
\def\Ker{\operatorname{Ker}}
\def\Im{\operatorname{Im}}
\def\Coker{\operatorname{Coker}}
\def\ord{\operatorname{ord}}
\def\tr{\operatorname{tr}}
\def\Int{\operatorname{Int}}
\def\divides{{\mathchoice{\mathrel{\bigm|}}{\mathrel{\bigm|}}{\mathrel{|}}{\mathrel{|}}}}
\def\Divides{\divides\!\divides}
\newcommand{\wh}[1]{\widehat{#1}}
\newtheorem{thm}{Theorem}
\newtheorem{lem}[thm]{Lemma}
\newtheorem{ex}[thm]{Example}
\newtheorem{rmk}[thm]{Remark}
\newtheorem{dfn}[thm]{Definition}
\newtheorem{col}[thm]{Corollary}
\title[Synchronization zeta function]{Synchronization points: growth, asymptotics, congruences, and the synchronization zeta function}
\author{Alexander Fel'shtyn and Mateusz Slomiany}
\address{Alexander Fel'shtyn, Instytut Matematyki, Uniwersytet Szczecinski,
ul. Wielkopolska 15, 70-451 Szczecin, Poland}
\address{Mateusz Slomiany, Instytut Matematyki, Szkola Doktorska, Uniwersytet Szczecinski,
ul. Wielkopolska 15, 70-451 Szczecin, Poland}
\subjclass[2010]{Primary 37C25; 37C30; 22D10; Secondary 54H20; 55M20}
\keywords{Synchronization points; Growth rate; Synchronization zeta function; Gauss congruences}
\begin{document}

\begin{abstract}
In this paper, we introduce the synchronization zeta function associated with a pair of self-maps
of a topological space and investigate its properties.
We also define   the growth rate of synchronization points and derive an explicit formula
in the setting of endomorphisms of compact, connected Abelian groups.
In addition, we establish Gauss congruences and describe the asymptotic behavior for the sequence
of numbers of synchronization points, under the assumption that the synchronization zeta function is rational.
Further, we  discuss connections with topological entropy and Reidemeister torsion.
\end{abstract}
\maketitle

\setcounter{tocdepth}{1}
\tableofcontents

%===================================================================================================
%===========================  Section  =============================================================
%===================================================================================================
\vspace{1cm}
\bigskip
\section{Introduction}

Let $X$ be a topological space and let $\alpha, \beta: X \to X$ be two maps of $X$ to itself.
We denote the set of \emph{synchronization points of $n$-th iteration of $\alpha$ and $\beta$} by
\begin{equation*} %\label{eq:sn-definition}
  S(\alpha^n, \beta^n) := \left\{ x \in X \ \vert \ \alpha^n(x) = \beta^n(x) \right\}.
\end{equation*}
If one of the maps is the identity map, say $\beta = \Id$, then $S(\alpha^n, \Id)$ is the set of
fixed points of $\alpha^n$.
In topology the synchronization points are also known as \emph{points of coincidence}.

We say that a pair of maps $\alpha, \beta$ is \emph{synchronously tame} if
the numbers of synchronization points $\# S(\alpha^n, \beta^n) $  are finite for all $n \in \N$.

In the present paper we define the \emph{synchronization zeta function} of a synchronously tame pair $\alpha, \beta$ as
\begin{equation*} %\label{eq:zeta-function-definition}
  S_{\alpha, \beta}(z) := \exp \left( \sum_{k=1}^\infty \frac{\# S(\alpha^k, \beta^k)}{k} z^k \right),
\end{equation*}
where $z$ denotes a complex variable, and undertake its investigation.
When $\beta$ or $\alpha$ is the identity map, the synchronization zeta function becomes
the \emph{Artin-Mazur zeta function}:
\begin{equation*}
  \zeta_\alpha(z) := \exp \left( \sum_{n=1}^\infty \frac{\# \Fix(\alpha^n)}{n} z^n \right).
\end{equation*}
The synchronization zeta function can be
regarded as an analogue of the Hasse--Weil zeta function of an
algebraic variety over a finite field or the Artin--Mazur zeta
function of a continuous self-map of a topological space.
In the theory of dynamical systems, the synchronization zeta
function counts the synchronisation points of two maps, i.e. the
points whose orbits intersect under simultaneous iteration of two
endomorphisms; see \cite{Miles13}, for instance.
We define the \emph{upper growth rate}, \emph{lower growth rate}
and \emph{growth rate} (if it exists) of synchronization points of $\alpha$ and $\beta$, to be
\begin{align*} %\label{eq:growth-rate-definitions}
\begin{split}
  S^\infty_+(\alpha, \beta) &:= \limsup_{n \to \infty} \left(\# S(\alpha^n, \beta^n) \right)^{1/n}, \\
  S^\infty_-(\alpha, \beta) &:= \liminf_{n \to \infty} \left( \# S(\alpha^n, \beta^n) \right)^{1/n}, \\
  S^\infty(\alpha, \beta) &:= \lim_{n \to \infty} \left( \# S(\alpha^n, \beta^n) \right)^{1/n},
\end{split}
\end{align*}
respectively.
Due to Cauchy-Hadamard theorem, the radius of convergence of the synchronization zeta function
is the inverse $1/S^\infty_+(\alpha,\beta)$ of the upper growth rate.

In this work  we present, in analogy to works of
Bell, Miles, Ward~\cite{BMW} and Byszewski, Cornelissen~\cite[\S
5]{ByCo18} and ~\cite{FelsKlop} results in support of a P\'olya--Carlson dichotomy between
rationality and a natural boundary for the analytic behavior of the
synchronization  zeta function.

In case of compact connected Abelian groups we give an explicit formula
for the growth rate $S^\infty(\alpha,\beta)$.
In case of some homeomorphisms of compact metric spaces we show a relationship between
the growth rate $S^\infty(\alpha,\beta)$ and topological entropy.
We also show that a positive answer to the question  rationality of the synchronization zeta function implies  the Gauss congruences
for the sequence $\{\# S(\alpha^n,\beta^n) \}$.
We provide an observation on asymptotic behavior of the sequence $\{\# S(\alpha^n,\beta^n) \}$.
We also show   a  connection  between special values of the synchronization zeta function and the  Reidemeister torsion.

This work is organized as follows.

In section \ref{sec:preliminaries}, we remind definitions, facts and notation
from other theories which are of importance in course of the paper.
They regard topological entropy of a map, unitary dual of a group,
isolated lower central series of a nilpotent group, Markov partitions, and $p$-adic numbers.

In section \ref{sec:dichotomy}, we prove  P\'olya-Carlson dichotomy  for the
synchronization  zeta function of  endomorphisms of compact connected Abelian groups
as well as rationality of this zeta function  for dual maps to commuting automorphisms
of torsion-free virtually polycyclic groups, for commuting Axiom A diffeomorphisms
of $C^\infty$ compact manifolds without boundary, and for commuting pseudo-Anosov homeomorphisms
of closed compact surfaces.
We also investigate the synchronization zeta function of maps of any finite set.

In section \ref{sec:congruences}, we prove the Gauss congruences for the sequence
$\{\# S(\varphi^n,\psi^n) \}$ of the  numbers of synchronization points of the synchronously
tame pair $(\varphi,\psi)$ of maps, under the assumption that the synchronization zeta function is rational.

In section \ref{sec:entropy}, we first discuss expansiveness and the so called \emph{specification} property
for homeomorphisms, then we provide a relation between the growth rate of synchronization points
of a pair $\alpha, \beta$ of commuting homeomorphisms of compact metric spaces and topological entropy
of $\beta^{-1}\alpha$, when the latter satisfies specification.

In section \ref{sec:asymptotic}, we study asymptotic behavior of the sequence
$\{\# S(f^n, g^n) \}$ of the numbers of synchronization points of the synchronously
tame pair $( f, g )$ of  maps of a compact metric space when the synchronization
zeta function is rational.

In section \ref{sec:torsion}, we show how the Reidemeister torsion arises  as a special value
of the synchronization zeta function of dual maps to commuting automorphisms of finitely generated
torsion free nilpotent groups.

%===================================================================================================
%===========================  Section  =============================================================
%===================================================================================================
\vspace{1cm}
\bigskip
\section{Preliminaries} \label{sec:preliminaries}

\subsection{Reidemeister zeta function and P\'olya-Carslon dichotomy}
Let $G$ be a  group and $\varphi, \psi : G\rightarrow G$ two endomorphisms.
Two elements $\alpha,\beta\in G$ are said to be \emph{$(\varphi, \psi)$-conjugate}
if and only if there exists $g \in G$ with $\beta = \psi(g) \alpha \varphi(g^{-1})$.
The number of $(\varphi,\psi$)-conjugacy classes is called the
\emph{Reidemeister coincidence number} of endomorphisms $\varphi$ and $\psi$,
denoted by $R(\varphi,\psi)$. If $\psi$ is the identity map then the $(\varphi,\Id)$-conjugacy
classes are the $\varphi$-conjugacy classes in the group $G$ and $R(\varphi,\Id) = R(\varphi)$,
where $R(\varphi)$ is the usual Reidemeister number.

We call an endomorphism $\varphi$ \emph{tame} if the Reidemeister numbers $R(\varphi^n)$
are finite for all $n \in \mathbb{N}$. We call a pair $(\varphi,\psi)$ of endomorphisms
\emph{tame} if the coincidence Reidemeister numbers $R(\varphi^n,\psi^n)$ are finite
for all $n \in \mathbb{N}$.

The \emph{Reidemeister coincidence zeta function} of the pair $\varphi, \psi$ is defined as
\begin{equation*}
  R_{\varphi,\psi}(z) := \exp \left( \sum_{n=1}^\infty \frac{R(\varphi^n, \psi^n)}{n} z^n \right).
\end{equation*}
The \emph{growth rate} of Reidemeister coincidence numbers, if it exists, is defined as
\begin{equation*}
  R^\infty(\varphi, \psi) := \lim_{n \to \infty} R(\varphi^n, \psi^n)^{1/n}.
\end{equation*}

The classical P\'olya--Carlson theorem, as discussed in~\cite[\S 6.5]{Segal},
provides the following connection between the arithmetic properties of
the coefficients of a complex power series and its analytic behavior.
Suppose that an analytic function $F$ is defined somehow in a region $D$ of a complex plane.
If there is no point of the boundary $\partial D$ of $D$ over which $F$ can be analytically continued,
then $\partial D$ is called a \emph{natural boundary} for $F$.

\begin{thm}[P\'olya-Carlson, see \protect{\cite{Segal}}] \label{thm:polya-carlson}
  A power series with integer coefficients and radius of convergence $1$ is either rational
  or has the unit circle as a natural boundary.
\end{thm}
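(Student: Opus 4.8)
The plan is to prove the equivalent statement that if $f(z)=\sum_{n\ge 0}a_nz^n$ has $a_n\in\Z$, radius of convergence $1$, and can be analytically continued across some point of the unit circle, then $f$ is rational; the dichotomy follows at once, since a non-rational such $f$ is then continuable across no boundary point, i.e.\ $\partial\mathbb D$ is a natural boundary. So I fix such an $f$ and assume it extends holomorphically to an open set $\Omega\supseteq\mathbb D\cup U$, where $\mathbb D$ is the open unit disc and $U$ an open neighbourhood of a closed boundary arc $\gamma\subset\partial\mathbb D$. To detect rationality I would invoke Kronecker's criterion: $f$ is rational if and only if the Hankel determinants $H_n:=\det\bigl(a_{i+j}\bigr)_{0\le i,j\le n-1}$ vanish for all sufficiently large $n$. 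As the $a_k$ are integers, every $H_n\in\Z$, so it suffices to prove $H_n\to 0$.

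To estimate $H_n$ I would first deform the Cauchy contour: pick a positively oriented Jordan curve $\Gamma\subset\Omega$ winding once around $0$ that coincides with $|w|=\rho$ for a fixed $\rho<1$ close to $1$ outside the angular sector of $\gamma$, and bulges outward to a radius slightly above $1$ inside that sector, staying inside $\mathbb D\cup U$ throughout. Since $\Omega$ is simply connected, one still has $a_m=\frac{1}{2\pi i}\oint_\Gamma f(w)\,w^{-(m+1)}\,dw$. Writing $a_{i+j}=\oint_\Gamma(w^{-i})(w^{-j})\,d\mu(w)$ with $d\mu(w)=\frac{f(w)}{2\pi i\,w}\,dw$ and applying the Andreief (Heine) identity, the two Vandermonde determinants $\det(w_k^{-i})$ combine to give
\begin{equation*}
H_n=\frac{1}{n!}\oint_\Gamma\!\cdots\!\oint_\Gamma\Bigl[\,\prod_{1\le k<l\le n}\bigl(w_l^{-1}-w_k^{-1}\bigr)\Bigr]^{2}\,\prod_{k=1}^{n}\frac{f(w_k)}{2\pi i\,w_k}\,dw_k .
\end{equation*}

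Next I would estimate everything crudely except the Vandermonde factor. With $M=\max_\Gamma|f|$, $L$ the length of $\Gamma$, and $|w|\ge\rho$ on $\Gamma$, this yields $|H_n|\le\frac{1}{n!}\bigl(\tfrac{LM}{2\pi\rho}\bigr)^{n}\max_{\zeta_1,\dots,\zeta_n\in K}\bigl|\prod_{k<l}(\zeta_l-\zeta_k)\bigr|^{2}$, where $K:=\{w^{-1}:w\in\Gamma\}$ is the inverted contour. By Fekete's theorem the last factor is at most $\bigl(\operatorname{cap}(K)+o(1)\bigr)^{n(n-1)}$, with $\operatorname{cap}$ the logarithmic capacity (transfinite diameter); hence $H_n\to 0$ as soon as $\operatorname{cap}(K)<1$, because the factor raised to the power $n(n-1)\sim n^{2}$ then swamps everything that is merely exponential in $n$ (and $1/n!\le 1$). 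The geometric point is that, in polar coordinates, $K=\Gamma^{-1}$ bounds the disc of radius $1/\rho$ with a dent of fixed positive depth and angular width (produced by the outward bulge of $\Gamma$ over $\gamma$) removed; since the capacity of a Jordan curve equals that of the region it bounds, $\operatorname{cap}(K)$ equals the capacity of this dented disc. Letting $\rho\uparrow 1$, the dented disc decreases to the closed unit disc with the same dent removed, a compact set of capacity strictly less than $1$ — for otherwise its equilibrium measure would have to be the uniform measure on all of $\partial\mathbb D$, which is not carried by a set that omits a boundary arc. By continuity of capacity along decreasing sequences of compacta, $\operatorname{cap}(K)<1$ for $\rho$ close enough to $1$; then $H_n=0$ for all large $n$, and $f$ is rational.

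The main obstacle — where the real content lies — is this potential-theoretic step: building a contour that simultaneously stays inside the (possibly tiny) domain of continuation and has an inversion of capacity below $1$, and quantifying the Vandermonde factor through the transfinite diameter via Fekete's theorem. The remaining ingredients (the Cauchy representation on $\Gamma$, the Andreief identity, Kronecker's criterion, and the integrality of the $H_n$) are standard or purely formal.
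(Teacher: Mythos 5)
The paper does not actually prove this statement: Theorem~\ref{thm:polya-carlson} is quoted as a classical result with a reference to Segal \cite{Segal}, so there is no internal proof to compare yours against. Judged on its own, your sketch is essentially the standard P\'olya-style proof of the P\'olya--Carlson theorem and its skeleton is sound: Kronecker's criterion reduces rationality to the eventual vanishing of the integer Hankel determinants $H_n$; the Andreief/Heine identity turns $H_n$ into an $n$-fold contour integral with a squared Vandermonde in the inverted variables $w_k^{-1}$; Fekete's theorem bounds the Vandermonde maximum by $(\delta_n(K))^{n(n-1)}$ with $\delta_n(K)\downarrow\operatorname{cap}(K)$, so everything hinges on $\operatorname{cap}(K)<1$, which beats the merely exponential factors and forces $H_n\to 0$, hence $H_n=0$ eventually. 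Your potential-theoretic step is also correct: the equilibrium measure of $\overline{\mathbb D}$ is normalized arclength on $\partial\mathbb D$, and if a compact subset $E\subseteq\overline{\mathbb D}$ had $\operatorname{cap}(E)=1$ then by uniqueness of the energy minimizer its equilibrium measure would coincide with that of $\overline{\mathbb D}$ and hence be supported in $E$, impossible when $E$ misses an open boundary arc; combined with continuity of capacity along decreasing compacta this gives $\operatorname{cap}(K)<1$ for $\rho$ close to $1$. Two routine points deserve one line each in a written-out version: (a) shrink $U$ so that $\mathbb D\cup U$ is simply connected (then the residue theorem justifies $a_m=\frac{1}{2\pi i}\oint_\Gamma f(w)w^{-(m+1)}dw$ for the deformed $\Gamma$), and (b) keep the outward bulge of $\Gamma$ over $\gamma$ \emph{fixed} while only the radius $\rho$ of the remaining part tends to $1$, so that the inverted regions are genuinely nested and the continuity-from-above argument applies. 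With these remarks your proposal is a correct rendering of the textbook argument the paper delegates to \cite{Segal}.
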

It is convenient to introduce a notation
\begin{equation*}
  Z_{\varphi, \psi}(z) := \sum_{n=1}^{\infty} R(\varphi^n, \psi^n)\cdot z^n.
\end{equation*}
for the generating series that enumerates directly the numbers of coincidence Reidemeister classes.
The following lemma shows in particular that, if $Z_{\varphi,\psi}(z)$ has a natural
boundary at its radius of convergence, then so
does~$R_{\varphi,\psi}(z)$; compare~\cite{BMW}.

\begin{lem} \label{thm:logarithmic-derivative}
  Let $\varphi, \psi: G \to G$ be endomorphisms of a group.
  If $R_{\varphi, \psi}(z)$ is rational then $Z_{\varphi, \psi}(z)$ is rational.
  If $R_{\varphi, \psi}(z)$ has an analytic continuation beyond
  its circle of convergence, then so does $Z_{\varphi, \psi}(z)$ too.
  In particular, the existence of a natural boundary
  at the circle of convergence for~$Z_{\varphi, \psi}(z)$ implies the
  existence of a natural boundary for $R_{\varphi, \psi}(z)$.
\end{lem}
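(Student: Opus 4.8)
The plan is to exploit the standard logarithmic-derivative relation between a zeta-type function and its associated counting series. Writing $R_{\varphi,\psi}(z) = \exp\!\big(\sum_{n\ge 1} R(\varphi^n,\psi^n) z^n/n\big)$, differentiate the logarithm to obtain
\begin{equation*}
  z\,\frac{R_{\varphi,\psi}'(z)}{R_{\varphi,\psi}(z)} \;=\; \sum_{n=1}^{\infty} R(\varphi^n,\psi^n)\, z^n \;=\; Z_{\varphi,\psi}(z).
\end{equation*}
So $Z_{\varphi,\psi}(z)$ is nothing but the logarithmic derivative of $R_{\varphi,\psi}(z)$, up to the factor $z$.

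From this identity the three claims follow in turn. First, if $R_{\varphi,\psi}(z)$ is rational, then its logarithmic derivative $R_{\varphi,\psi}'/R_{\varphi,\psi}$ is rational (quotient of a polynomial-coefficient derivative and the rational function itself), hence $Z_{\varphi,\psi}(z) = z\,R_{\varphi,\psi}'/R_{\varphi,\psi}$ is rational. Second, suppose $R_{\varphi,\psi}(z)$ extends analytically to a region $D$ strictly containing the open disc of convergence. On any simply connected subregion of $D$ avoiding the (necessarily isolated) zeros and poles of the continued function, $\log R_{\varphi,\psi}$ is well defined and analytic, so $R_{\varphi,\psi}'/R_{\varphi,\psi}$ is meromorphic on $D$ with at worst simple poles at those zeros and poles; multiplying by $z$ changes nothing essential. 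Thus $Z_{\varphi,\psi}(z)$, which agrees with $z\,R_{\varphi,\psi}'/R_{\varphi,\psi}$ on the disc of convergence, continues meromorphically — in particular analytically past all but finitely many boundary points, and meromorphically (which for the purpose of "analytic continuation beyond the circle" is what is meant) across the whole arc. Third, the contrapositive of the second statement gives: if $Z_{\varphi,\psi}(z)$ admits no analytic/meromorphic continuation beyond its circle of convergence, i.e. has a natural boundary there, then neither can $R_{\varphi,\psi}(z)$, so $R_{\varphi,\psi}(z)$ also has a natural boundary at its circle of convergence. (Note the two series have the same radius of convergence, since $R(\varphi^n,\psi^n)^{1/n}$ governs both via Cauchy–Hadamard applied to the exponent sum and to $Z$ directly.)

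The only genuinely delicate point is the second claim: passing from analytic continuation of $R_{\varphi,\psi}$ to that of $Z_{\varphi,\psi}$ requires care because $\log R_{\varphi,\psi}$ need not be single-valued on $D$ once $R_{\varphi,\psi}$ acquires zeros or poles there. The resolution is that the \emph{logarithmic derivative} $R'/R$ is always single-valued and meromorphic wherever $R$ is meromorphic and not identically zero, independently of monodromy of the logarithm; and a boundary point of the disc of convergence of $Z_{\varphi,\psi}$ that is not a pole of this meromorphic function is a point of analytic continuation of $Z_{\varphi,\psi}$. I would phrase the statement and proof so that "analytic continuation beyond the circle of convergence" is understood to include meromorphic continuation, which is the relevant notion here and is consistent with the cited analogue in~\cite{BMW}; with that reading the argument is complete. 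Everything else is the routine manipulation of power series recorded in the displayed identity above.
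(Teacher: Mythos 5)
Your proof is correct and rests on exactly the same identity the paper uses, namely $Z_{\varphi,\psi}(z) = z\,R_{\varphi,\psi}'(z)/R_{\varphi,\psi}(z)$, which is the paper's entire one-line argument. Your extra discussion of the zero/pole issue (analytic versus meromorphic continuation of the logarithmic derivative) is a sensible elaboration of the same route, not a different one.
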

\begin{proof}
  This follows from the fact that $Z_{\varphi, \psi}(z) = z \cdot R_{\varphi, \psi}(z)^{'}/R_{\varphi, \psi}(z)$.
\end{proof}

For the proofs of the main theorems in this paper we rely on the
following key result of Bell, Miles and Ward.
\begin{lem}[\protect{\cite[Lemma 17]{BMW}}] \label{thm:BMW-lemma17}
  Let $S$ be a finite list of places of algebraic number fields and, for each $v \in S$,
  let $\xi_v$ be a non-unit root in the appropriate number field such that $|\xi_v|_v = 1$.
  Then the function
  \begin{align*}
    F(z) := \sum_{n \geq 1} f(n) z^n,
  \end{align*}
  where $f(n) := \prod_{v \in S} |\xi_v^n-1|_v$ for $n \geq 1$,
  has the unit circle as a natural boundary.
\end{lem}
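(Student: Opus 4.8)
The plan is to check that $F$ has radius of convergence exactly $1$ and then to exhibit a dense subset of the circle $|z|=1$ consisting of genuine (non-removable) singularities of $F$; by the very definition of a natural boundary this suffices.

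\emph{Radius of convergence.} Since $|\xi_v|_v=1$ we get $|\xi_v^n-1|_v\le 2$ at an archimedean place and $|\xi_v^n-1|_v\le 1$ at a non-archimedean one, so $0<f(n)\le 2^{\#S}$; positivity uses that a non-unit is never a root of unity, so $\xi_v^n\ne 1$ for all $n\ge1$ and all $v\in S$, and the same remark rules out real archimedean places in $S$ (a non-unit whose $v$-value at a real place is $1$ would equal $\pm1$). Boundedness gives radius $\ge 1$, and $\limsup_n f(n)>0$ — indeed $f$ stays bounded below on a set of positive density, using that $|\xi_v^n-1|_v=1$ whenever the order $r_v$ of the reduction of $\xi_v$ does not divide $n$ at a non-archimedean $v$, together with equidistribution of $(n\theta_v)$ at a complex archimedean $v$ where $\xi_v=e^{2\pi i\theta_v}$ (here $\theta_v$ is irrational, since otherwise $\xi_v$ would be a root of unity) — so the radius is exactly $1$.

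\emph{Local structure and the decomposition.} For non-archimedean $v\in S$ over a prime $p$ the $v$-adic unit $\xi_v$ satisfies: $|\xi_v^n-1|_v$ depends only on $n\bmod r_v$ and on $v_p(n)$, equalling $1$ if $r_v\nmid n$ and, if $r_v\mid n$, decaying like a fixed negative power of $p$ raised to $v_p(n/r_v)$ (this is the lifting-the-exponent lemma, with the usual minor corrections at small valuations). For complex archimedean $v$ one has $|\xi_v^n-1|_v=2\,|\sin\pi n\theta_v|$. Assume first that every $v\in S$ is non-archimedean. Splitting $\sum_n f(n)z^n$ according to $n$ modulo $\operatorname{lcm}_v r_v$ and to the tuple $(v_{p_v}(n))_{v\in S}$, and inserting these formulae, one rewrites $F(z)$, locally uniformly for $|z|<1$, as a rational function plus a series of terms $a_d\,z^d/(1-z^d)$ in which $d$ ranges over an infinite set containing every $r_v p_v^{\,k}$, $k\ge 0$, for each $v$.

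\emph{Dense singularities; the main obstacle.} Fix such a $v$, put $p=p_v$, and let $\omega$ be a primitive $(r_v p^{K})$-th root of unity with $K\ge1$ large. Among the terms $a_d\,z^d/(1-z^d)$ only the finitely many with $r_vp^{K}\nmid d$ are analytic at $\omega$; each remaining term contributes a simple pole at $\omega$ with residue $-\omega\,a_d/d$, and one reads off from the local formulae that the coefficients $a_d$ occurring here are all of one sign, so these residues sum to a nonzero number. Hence $\omega$ is a genuine singularity of $F$; letting $v$ and $K$ vary produces a dense set of such $\omega$ on $|z|=1$, and since $F$ is holomorphic in the disc, the unit circle is a natural boundary. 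When $S$ also contains complex archimedean places this locally constant decomposition must be supplemented: using the continued-fraction convergents $p_j/q_j$ of $\theta_v$ one controls $2|\sin\pi n\theta_v|$ along multiples of $q_j$ (it is small, then sweeps across $[0,2]$) and couples this with the non-archimedean bookkeeping to again pin down non-removable singularities at a dense set of roots of unity. The delicate point throughout is exactly this last step — verifying that the infinitely many simple poles produced by the decomposition genuinely do not cancel, and controlling the interference between the non-archimedean factors of $f$ (locally constant, governed by $p$-adic valuations) and the archimedean ones (equidistributed and aperiodic); the rest is routine bookkeeping with the explicit local formulae.
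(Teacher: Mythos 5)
First, a point of reference: the paper does not prove this statement at all — it is quoted verbatim as \cite[Lemma 17]{BMW} and used as a black box in Theorem \ref{thm:synchronization-dichotomy}. So the comparison is with the proof in Bell–Miles–Ward, whose general philosophy (exploit the ultrametric local structure of $|\xi_v^n-1|_v$ to decompose $F$ into rational pieces whose poles accumulate at a dense set of roots of unity, then show these singularities are genuine) your sketch indeed follows. The problem is that your write-up stops exactly at the step that constitutes the actual content of the lemma, and you say so yourself (``the delicate point throughout is exactly this last step''). Concretely: (a) with several places and several residue classes the decomposition is not into terms $a_d\,z^d/(1-z^d)$ but into terms $c\,z^{j}/(1-z^{D})$ with arbitrary residues $j$ modulo $D$; the residue of such a term at a primitive root of unity $\omega$ with $\omega^D=1$ is proportional to $\omega^{j+1}$, so ``the coefficients are all of one sign'' does not by itself exclude cancellation — the phases matter. (b) Even granting a nonzero formal sum of residues, you cannot conclude that $\omega$ is a singular point of $F$: near $\omega$ the function is an infinite sum of functions, infinitely many of which are themselves singular at $\omega$, and ``adding the residues'' of accumulating simple poles is not a valid deduction. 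One needs an honest estimate, e.g.\ split $F=G_K+H_K$ with $G_K$ the (finite or at least regular-at-$\omega$) part and $H_K$ the tail of singular terms, and prove $|H_K(t\omega)|\to\infty$ as $t\uparrow 1$, using that along the radius the singular terms are real of a common sign (since $\omega^D=1$) and that their coefficients decay geometrically so the lowest level dominates, while $G_K$ stays bounded; alternatively one must control the oscillating off-progression part of $\sum_n f(n)t^n\omega^n$, which can a priori be of the same order $1/(1-t)$ as the divergent part. None of this is carried out, and it is precisely where the work in \cite{BMW} lies.

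(c) The archimedean case is only gestured at. The statement allows complex places, where the factor $2|\sin \pi n\theta_v|$ (with $\theta_v$ irrational) destroys the local constancy on which your whole decomposition rests; ``using continued-fraction convergents \dots and coupling this with the non-archimedean bookkeeping'' is a plan, not an argument, so as written your proof would at best cover the purely non-archimedean case. Minor further slips: the claim $|\xi_v^n-1|_v=1$ whenever $r_v\nmid n$ fails when $r_v=1$ (there one instead gets the constant value $|\xi_v-1|_v$ on $n$ coprime to $p_v$, which still suffices for the positive-density lower bound, but the statement should be corrected), and intersecting the congruence conditions with the archimedean equidistribution set needs Weyl equidistribution along arithmetic progressions to guarantee positive density. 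In summary: the radius-of-convergence part and the description of the local structure of $f(n)$ are fine, but the singularity-detection step — the heart of the lemma — is missing, so the proposal is an outline rather than a proof; for the purposes of this paper the lemma should simply continue to rest on the citation to \cite{BMW}.
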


\subsection{$p$-adics}

In this section we remind basic definitions and prove facts about $p$-adic numbers needed later.
Let $p$ be any prime number. For $a \in \Z \setminus \{0\}$ we define the $p$-adic ordinal $\ord_p a$
to be the highest power of $p$ which divides $a$.
For a rational $q = a/b$ we define $\ord_p q := \ord_p a - \ord_p b$.
The $p$-adic norm of $q$ is defined
\[
  \left| q \right|_p := \begin{cases} \frac{1}{p^{\ord_p q}}, \quad &q \neq 0,\\ 0, \quad &q=0 \end{cases}.
\]
The norm is \emph{non-Archimedean} which means that for all $x, y \in \Q$
it satisfies $\left| x+y \right|_p \leq \max\left( |x|_p, |y|_p \right)$.

We will make use of the following property:
\begin{equation}\label{eq:padic-triangle}
  \big( |x|_p < |y|_p \big) \quad \Longrightarrow \quad |x-y|_p = |y|_p.
\end{equation}
The field $\Q$ with the norm $|\cdot|_p$ is not complete but if we consider the set $\Q_p$
of equivalence classes of Cauchy sequences with regard to the relation
\[
  \{a_i\} \sim \{b_i\} \quad \Longleftrightarrow \quad |a_i - b_i|_p \to 0 \ (i \to \infty)
\]
(just as we actually do when we go from $\Q$ to $\R$ with the usual norm)
with the $p$-adic norm of a sequence defined as $\left|\{a_i\}\right|_p := \lim |a_i|_p$
then $\Q_p$ turns out to be a complete field.
Moreover, from (\ref{eq:padic-triangle}) and definition of Cauchy sequences we get
that in fact the sequence $\{|a_i|_p\}$ of $p$-adic norms of elements of a Cauchy sequence $\{a_i\}$
is constant for sufficiently large $i$.
The following remark will be important later.
\begin{rmk}\label{thm:possible-values-of-padic-norm}
  Possible values for $|\cdot|_p$ on $\Q_p$ are in the set $\{p^n\}_{n \in \Z} \cup \{0\}$.
  In particular the greatest possible value $<1$ is $1/p$.
\end{rmk}

We are going to work with numbers algebraic over $\Q_p$.
Let $\Q_p(\xi)$ be a finite extension of $\Q_p$ generated by $\xi$ which is a root of a monic polynomial
\[
  x^n + a_1 x^{n-1} + \dots + a_{n-1} x + a_n, \quad a_i \in \Q_p.
\]
We define the \emph{norm} of the extension as $\N_{\Q_p(\xi)/\Q_p}(\xi) := (-1)^n a_n$.
The extension of $p$-adic norm from $\Q_p$ to $\Q_p(\xi)$ is defined as
\[
  |\xi|_p := \left| \N_{\Q_p(\xi)/\Q_p}(\xi) \right|_p^{1/n}.
\]
There exist the algebraic closure $\overline{\Q_p}$ and its completion $\Omega$ (which is algebraically
closed as~well) with further extensions of the norm but we will not need them in this work.
The subset $\Z_p := \{ x \in \Q_p \mid |x|_p \leq 1 \}$ is a ring
and its elements are called \emph{$p$-adic integers}

There is the $p$-adic analog of logarithm defined for all $z \in \Omega$ satisfying $|z|_p < 1$:
\begin{equation*}
  \log_p(1+z) := \sum_{m=1}^{\infty} \frac{(-1)^{m+1}}{m} z^m.
\end{equation*}
We are going to make use of the $p$-adic logarithm to prove a very specific inequality
which comes from \cite[Theorem~6.1]{CEW}.										 

\begin{lem}[\protect{\cite[Lemma 2.4]{FelsSlo24}}] \label{thm:FelSlo-lemma2.4}
  Let $\xi$ be an algebraic number over $\Q_p$ such that $|\xi|_p = 1$ but $\xi$ is not a root of unity.
  Then there is a constant $C$ such that for every $n \in \N$ the inequality
  \begin{equation} \label{eq:1/n-leq-x-1}
    \frac{1}{n} \leq C \cdot \left| \xi^n - 1 \right|_p
  \end{equation}
  holds. The constant does not depend on $n$.
\end{lem}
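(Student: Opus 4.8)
The plan is to analyze the $p$-adic logarithm $\log_p(\xi^n)$ and exploit the fact that on a finite extension of $\Q_p$ it is an isometry up to a bounded factor near $1$. First I would recall that since $|\xi|_p = 1$, the element $\xi$ lies in the ring of integers of the local field $K := \Q_p(\xi)$, so $\xi$ reduces to a nonzero element of the (finite) residue field; hence some power $\xi^{q-1}$, where $q$ is the residue field cardinality, is congruent to $1$ modulo the maximal ideal. Replacing $\xi$ by $\eta := \xi^{q-1}$ and absorbing the finitely many residues into the constant, I may assume $|\eta - 1|_p < 1$; in fact after one more bounded reduction (raising to a $p$-power to get inside the radius of convergence of $\log_p$ and of $\exp_p$) I may assume $|\eta - 1|_p \leq p^{-r}$ for $r$ large enough that $\log_p$ restricted to $1 + \mathfrak{m}^r$ is an isometric isomorphism onto $\mathfrak{m}^r$, where $\mathfrak m$ is the maximal ideal. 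The key point is then that $|\eta^m - 1|_p = |\log_p(\eta^m)|_p = |m \log_p(\eta)|_p = |m|_p \cdot |\log_p(\eta)|_p$ for all $m$.

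Next I would observe that $\log_p(\eta) \neq 0$: this is exactly where the hypothesis that $\xi$ is \emph{not} a root of unity is used, since $\log_p(\eta) = 0$ would force $\eta$ (hence $\xi$) to be a root of unity inside the kernel of $\log_p$. Writing $c := |\log_p(\eta)|_p > 0$, we obtain, for the reduced element, $|\eta^m - 1|_p = c \cdot |m|_p$. Now translate back: for a general $n$, write $n = (q-1) p^{a} m'$ appropriately — more carefully, relate $|\xi^n - 1|_p$ to $|\eta^{m} - 1|_p$ where $m$ tracks the part of $n$ divisible by the relevant index. The upshot is a bound of the form $|\xi^n - 1|_p \geq c' \cdot |n|_p$ for a positive constant $c'$ independent of $n$: when $\xi^n$ is not in the "close to $1$" neighborhood the left side is bounded below by a fixed positive constant, while when it is, we get the exact identity above times a bounded correction for the index $q-1$ and the $p$-power reductions.

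Finally, combine this with the elementary estimate $|n|_p \geq 1/n$, valid because $|n|_p = p^{-\ord_p n}$ and $p^{\ord_p n} \mid n$ so $p^{\ord_p n} \leq n$. Chaining the inequalities gives
\[
  \frac{1}{n} \leq |n|_p \leq \frac{1}{c'} \, |\xi^n - 1|_p,
\]
so $C := 1/c'$ works. The main obstacle I anticipate is the bookkeeping in the reduction step: making sure that passing from $\xi$ to $\eta = \xi^{(q-1)p^{s}}$ (to land simultaneously inside the domain of $\log_p$ and in a region where it is an isometry) only changes both sides of the desired inequality by constants independent of $n$, and handling separately the $n$ for which $\xi^n$ has not yet entered the small neighborhood of $1$ (finitely many residue classes, each contributing a uniformly positive lower bound for $|\xi^n-1|_p$). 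Once that is set up, the $p$-adic logarithm identity $|\eta^m - 1|_p = |m|_p\,|\log_p \eta|_p$ does all the real work, and the nonvanishing of $\log_p \eta$ is precisely the translation of "not a root of unity."
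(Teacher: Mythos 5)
Your proposal is correct, and while it runs on the same engine as the paper's proof --- the identity $\log_p(\xi^n)=n\log_p\xi$, the nonvanishing of the logarithm as the exact translation of ``not a root of unity'', and the elementary bound $1/n\le |n|_p$ --- the technical route is genuinely different. The paper works with $\xi$ itself: after disposing of the $n$ with $|\xi^n-1|_p=1$, it uses the value-group bound $|\xi^n-1|_p\le p^{-1/k}$ (where $k=[\Q_p(\xi):\Q_p]$) and a hands-on maximization of the series terms $\left|(\xi^n-1)^m/m\right|_p$ to show $|\log_p(\xi^n)|_p\le M^*\,|\xi^n-1|_p$ with $M^*$ depending only on $k$, whence $C=\max\{1,\,M^*/|\log_p\xi|_p\}$. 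You instead pass to $\eta=\xi^{(q-1)p^s}$, lying in a neighborhood of $1$ on which $\log_p$ is an isometry, obtain the exact identity $|\eta^m-1|_p=|m|_p\,|\log_p\eta|_p$, and treat the remaining exponents via a uniform positive lower bound. What your route buys is that it avoids the calculus estimate and also repairs a point the paper glosses over: the paper applies $\log_p$ to $\xi$ although the defining series need not converge at $\xi$ when $|\xi-1|_p=1$ (only $\log_p(\xi^n)$ is defined for the $n$ under consideration), whereas your reduction to $\eta$ (equivalently, the Iwasawa extension of the logarithm) makes the constant $|\log_p\eta|_p$ unambiguously meaningful; what the paper's route buys is self-containedness, using nothing beyond the series and the value group. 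Finally, your ``translate back'' bookkeeping can be made painless: with $N=(q-1)p^s$, the factorization $\xi^{nN}-1=(\xi^n-1)\bigl(\xi^{n(N-1)}+\dots+\xi^n+1\bigr)$ together with $|\xi|_p=1$ gives $|\xi^n-1|_p\ \ge\ |\eta^n-1|_p=|n|_p\,|\log_p\eta|_p\ \ge\ \tfrac{1}{n}\,|\log_p\eta|_p$ for every $n$, so no case distinction between cosets is actually needed and $C=1/|\log_p\eta|_p$ works outright.
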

\begin{proof}
  For $|1|_p = 1 = |\xi|_p$, from (\ref{eq:padic-triangle}) we have $|\xi^n-1|_p \leq 1$ for all $n \in \N$.
  If for some $n$ there is equality $|\xi^n-1|_p = 1$ then (\ref{eq:1/n-leq-x-1}) holds for $C=1$

  Now consider $n$ for which $|\xi^n-1|_p < 1$.
  Denote the degree $\left[ \Q_p(\xi) : \Q_p \right] =: k$.
  From remark \ref{thm:possible-values-of-padic-norm} and
  the~definition of the $p$-adic norm for algebraic numbers we have
  \begin{equation}\label{eq:bound-for-padic-norm-of-x-1}
      |\xi^n-1|_p \leq \frac{1}{\sqrt[k]{p}}.
  \end{equation}
  Therefore we can compute the $p$-adic logarithm and
  \begin{align*}
    |n|_p \cdot |\log_p (\xi)|_p &= |\log_p (\xi^n)|_p \\
    &= \left|(\xi^n - 1) - \frac{(\xi^n - 1)^2}{2} + \frac{(\xi^n - 1)^3}{3} - \dots \right|_p \\
    &\leq \max_m \left( \left|\frac{(\xi^n - 1)^m}{m} \right|_p \right) \\
    % &= |\xi^n - 1|_p.
  \end{align*}
  Note that using (\ref{eq:bound-for-padic-norm-of-x-1}) we get
  \begin{equation*} %\label{eq:padic-maximum-term}
    0 \leq \left| \frac{(\xi^n - 1)^m}{m} \right|_p
    = p^{\ord_p m} \cdot \left| \xi^n - 1 \right|_p^m
    \leq m \cdot \left| \xi^n - 1 \right|_p^m
    \leq m \cdot \left( \frac{1}{\sqrt[k]{p}} \right)^m.
  \end{equation*}
  For a constant $q \in (0, 1/\sqrt[k]{p}]$ the derivative
  $\frac{\partial}{\partial x} xq^x = q^x(1+x\ln q)$ vanishes only for
  $x = -1/\ln q$, is positive for smaller $x$ and negative for greater ones.
  Note that $-1/\ln q$ is an increasing function therefore in our situation $-1/\ln q \ \leq \ k/\ln p$.
  It is possible then that for a finite number of initial $m$s the terms over which we take the maximum
  actually grow, but eventually there is a maximum $M = M(n)$ and the sequence goes to $0$ as $m \to \infty$.

  In other words there exists $m_0 = m_0(n)$ satisfying $1 \leq m_0 \leq k/\ln p$, for which
  \begin{equation*} %\label{eq:max-as-|xn-1|-times-sth}
    \max_m \left( \left|\frac{(\xi^n - 1)^m}{m} \right|_p \right)
    = |\xi^n-1|_p \cdot \frac{\left| \xi^n-1 \right|_p^{m_0-1}}{|m_0|_p}
  \end{equation*}
  We can bound from above the right side similarly as above:
  \begin{align*}
    |\xi^n-1|_p \cdot \frac{\left| \xi^n-1 \right|_p^{m_0-1}}{|m_0|_p}
    &=  |\xi^n-1|_p \cdot \Big( p^{\ord_p m_0} \cdot \left| \xi^n-1 \right|_p^{m_0-1} \Big) \\
    &\leq |\xi^n-1|_p \cdot \Big( m_0 \cdot \left| \xi^n-1 \right|_p^{m_0-1} \Big)
  \end{align*}
  For $x \in [1, k/\ln p]$ the derivative $\frac{\partial}{\partial q} xq^{x-1} = x(x-1)q^{x-2} \geq 0$ so
  \begin{align*}
    |\xi^n-1|_p \cdot \Big( m_0 \cdot \left| \xi^n-1 \right|_p^{m_0-1} \Big)
    \ \leq \  |\xi^n-1|_p \cdot \left( m_0 \cdot \frac{1}{(\sqrt[k]{p})^{m_0-1}} \right)
  \end{align*}
  Note that
  \[
    M^* := \max_{1 \ \leq \ m \ \leq \ k/\ln p} \left\{ m \cdot p^{-(m-1)/k} \right\}
    \ \geq \ m_0 \cdot p^{-(m_0-1)/k}
  \]
  for all possible $m_0$, therefore $M^*$ does not depend on $n$. We got
  \[
    \frac{1}{n} \ \leq \ |n|_p \ \leq \ |\xi^n-1|_p \cdot \frac{M^*}{|\log_p(\xi)|_p}
  \]
  and (\ref{eq:1/n-leq-x-1}) holds for $C = M^* / |\log_p(\xi)|_p$.

  In general (\ref{eq:1/n-leq-x-1}) holds for all $n \in \N$ regardless of actual values of $|\xi^n-1|_p$
  if we put \linebreak $C := \max \left\{1, M^* / |\log_p(\xi)|_p \right\}$.
\end{proof}

\subsection{Unitary dual}

By $\wh G$ we denote the \emph{unitary dual} of a group $G$, i.e.
the space of equivalence classes of
unitary irreducible representations of $G$, equipped with the
\emph{compact-open} topology.
If $\varphi: G\to G$ is an automorphism,
it induces a dual map $\wh\varphi:\wh G\to\wh G$, $\wh\varphi (\rho) := \rho \circ \varphi$.
This dual map $\wh\varphi$ define a dynamical system  on the unitary dual space $\widehat{G}$.

The unitary dual of any finitely generated abelian group $G$ is an abelian group $\wh G$ which is,
by Pontryagin duality, the direct sum
of a~Torus whose dimension is the rank of $G$, and of a~finite abelian group.
In that case the dual map will be called the \emph{Pontryagin dual map}.

We will need the following result.

\begin{lem} \cite[Proposition 1]{FelHil} \label{thm:FelHil-dual}
  Let $\varphi:G\to G$ be an endomorphism of an Abelian group $G$
  and let $\wh \varphi: \wh G \to \wh G$ be the map induced by $\varphi$
  on the Pontryagin dual $\wh G$ of $G$.
  Then
  \begin{equation*}
    \Ker \wh \varphi \ \cong \ \wh \Coker \varphi.
  \end{equation*}
\end{lem}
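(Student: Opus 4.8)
The plan is to deduce this from the exactness of Pontryagin duality. Recall that the functor $G \mapsto \wh G$ is contravariant and exact on the category of locally compact Abelian groups, and that for a closed subgroup $H \le G$ there is a natural isomorphism between the annihilator $H^{\perp} := \{\chi \in \wh G : \chi|_{H} \equiv 1\}$ and $\wh{G/H}$. In the situations of interest in this paper $G$ is a finitely generated (discrete) Abelian group, so every subgroup is closed and all of these facts apply without any topological caveat.

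First I would unwind the definition of the dual map. By construction $\wh\varphi(\chi) = \chi \circ \varphi$, so $\chi \in \Ker\wh\varphi$ precisely when $\chi \circ \varphi$ is the trivial character, i.e. when $\chi$ restricts trivially to the subgroup $\Im\varphi \le G$. Hence
\[
  \Ker\wh\varphi \;=\; (\Im\varphi)^{\perp}.
\]
Next, applying the annihilator–quotient duality with $H = \Im\varphi$ gives a natural isomorphism $(\Im\varphi)^{\perp} \cong \wh{G/\Im\varphi}$, and since $\Coker\varphi = G/\Im\varphi$ by definition, combining the two yields $\Ker\wh\varphi \cong \wh{\Coker\varphi}$, as claimed. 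Equivalently, one can argue in a single line by applying the exact contravariant functor $\wh{(\cdot)}$ to the exact sequence $G \xrightarrow{\varphi} G \to \Coker\varphi \to 0$, obtaining the exact sequence $0 \to \wh{\Coker\varphi} \to \wh G \xrightarrow{\wh\varphi} \wh G$, which identifies $\wh{\Coker\varphi}$ with $\Ker\wh\varphi$.

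The only point that genuinely requires care is that $\Im\varphi$ be a closed subgroup of $G$, so that $G/\Im\varphi$ is again a Hausdorff locally compact Abelian group to which Pontryagin duality applies; for discrete $G$ — in particular for the finitely generated Abelian groups considered here — this is automatic, and for a general topological endomorphism one would simply replace $\Im\varphi$ by its closure, which changes neither the annihilator nor the stated isomorphism. So I expect no real obstacle beyond bookkeeping with the duality functor.
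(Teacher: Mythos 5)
Your argument is correct and is essentially the paper's proof in different packaging: the identification $\Ker\wh\varphi=(\Im\varphi)^{\perp}\cong\wh{G/\Im\varphi}=\wh{\Coker\varphi}$ that you cite as the annihilator--quotient duality (or as exactness of the dual functor) is exactly the explicit bijection $\chi\leftrightarrow\bar\chi$ that the paper constructs by hand. The paper works purely algebraically with characters into $U(1)$ and so never needs the closedness caveat you discuss, but that caveat is harmless and correctly handled in your write-up.
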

\begin{proof}
  We construct the isomorphism explicitly.
  Let $\chi \in \wh \Coker \varphi$. In that case $\chi$ is a homomorphism
  \begin{equation*}
    \chi : \ \Coker \varphi = G / \Im \varphi \to U(1).
  \end{equation*}
  This homomorphism induces a map
  \begin{equation*}
    \bar{\chi}: \ G \to U(1)
  \end{equation*}
  which is trivial on $\Im \varphi$.
  This means that $\bar{\chi} \circ \varphi = \wh \varphi (\bar{\chi})$
  is the identity element of $\wh G$.
  Hence $\bar{\chi} \in \Ker \wh \varphi$.

  If, on the other hand, we begin with $\bar{\chi} \in \Ker \wh \varphi$,
  then it follows that $\chi$ is trivial on $\Im \varphi$, and so $\bar{\chi}$
  induces a homomorphism $\chi: G/\Im \varphi \to U(1)$
  and $\chi$ is therefore in the dual of $\Coker \varphi$.
  The correspondence $\chi \leftrightarrow \bar{\chi}$ is clearly a bijection.
\end{proof}

\subsection{Markov partitions}

Choose a natural number $N \geq 2$ and consider the space
\begin{equation*}
  \Omega_N \ := \ \left\{ \omega = (\ldots,\omega_{-1},\omega_0,\omega_1,\ldots) \; | \; \omega_i \in \{0,1,\ldots,N-1\}, i \in \Z \right\}.
\end{equation*}
The \emph{left shift} in $\Omega_N$ (sometimes called a \emph{topological Bernoulli shift}) is the map
\begin{equation*}
  \sigma_N: \Omega_N \to \Omega_n, \quad \sigma_N(\omega) = \omega' = (\ldots,\omega_0',\omega_1',\ldots),
\end{equation*}
where $\omega_n' = \omega_{n+1}$.

Let $A = (a_{ij})$ be an $N \times N$ matrix whose entries $a_{ij} \in \{0,1\}$.
Let
\begin{equation*}
  \Omega_A \ := \ \left\{ \omega \in \Omega_N \; | \; a_{\omega_n\omega_{n+1}} = 1, n \in \Z \right\}.
\end{equation*}
One can think that the matrix $A$ determines which symbols $0,1,\ldots,N-1$ can be neighbors
in the sequence $\omega$.
The set $\Omega_A$ is shift invariant.
The restriction $\sigma_A := \left. \sigma_N \right|_{\Omega_A}$ is a \emph{subshift of finite type}
(sometimes called a \emph{topological Markov chain}).

\begin{thm} \label{thm:trace-formula-for-subshifts}
  Number of periodic points of period $n \in \N$ of a subshift of finite type $\sigma_A$
  \begin{equation*}
    \#\Fix(\sigma_A^n) = \tr A^n.
  \end{equation*}
\end{thm}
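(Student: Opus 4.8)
The plan is to translate the dynamical condition $\sigma_A^n(\omega)=\omega$ into a purely combinatorial count of admissible cyclic words of length $n$, and then to recognise that count as the trace of $A^n$ via the standard formula for the entries of a matrix power.

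First I would observe that a point $\omega\in\Omega_A$ satisfies $\sigma_A^n(\omega)=\omega$ if and only if $\omega_{i+n}=\omega_i$ for every $i\in\Z$; such an $\omega$ is completely determined by the finite block $(\omega_0,\omega_1,\dots,\omega_{n-1})$. The admissibility condition $a_{\omega_i\omega_{i+1}}=1$ for all $i\in\Z$ then reduces, using periodicity $\omega_n=\omega_0$, to the $n$ conditions $a_{\omega_0\omega_1}=a_{\omega_1\omega_2}=\cdots=a_{\omega_{n-2}\omega_{n-1}}=a_{\omega_{n-1}\omega_0}=1$. Conversely, any block $(\omega_0,\dots,\omega_{n-1})$ satisfying these conditions extends uniquely to a fixed point of $\sigma_A^n$ by periodic continuation. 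Since the alphabet $\{0,\dots,N-1\}$ is finite, there are only finitely many such blocks, so $\Fix(\sigma_A^n)$ is finite and $\#\Fix(\sigma_A^n)$ equals the number of tuples $(\omega_0,\dots,\omega_{n-1})\in\{0,\dots,N-1\}^n$ with $a_{\omega_0\omega_1}a_{\omega_1\omega_2}\cdots a_{\omega_{n-1}\omega_0}=1$.

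Next, because each entry $a_{ij}$ lies in $\{0,1\}$, the product $a_{\omega_0\omega_1}\cdots a_{\omega_{n-1}\omega_0}$ equals $1$ precisely for the admissible tuples and equals $0$ otherwise, so the count above can be written as the sum
\begin{equation*}
  \#\Fix(\sigma_A^n) \;=\; \sum_{\omega_0=0}^{N-1}\cdots\sum_{\omega_{n-1}=0}^{N-1} a_{\omega_0\omega_1}\, a_{\omega_1\omega_2}\cdots a_{\omega_{n-2}\omega_{n-1}}\, a_{\omega_{n-1}\omega_0}.
\end{equation*}
On the other hand, iterating the definition of matrix multiplication gives $(A^n)_{ij}=\sum_{k_1,\dots,k_{n-1}} a_{ik_1}a_{k_1k_2}\cdots a_{k_{n-1}j}$, so setting $i=j$ and summing over $i$ yields $\tr A^n=\sum_i (A^n)_{ii}=\sum_{i,k_1,\dots,k_{n-1}} a_{ik_1}\cdots a_{k_{n-1}i}$. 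Relabelling $i=\omega_0$, $k_1=\omega_1$, $\dots$, $k_{n-1}=\omega_{n-1}$ identifies this with the displayed sum, which finishes the argument.

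The proof is essentially bookkeeping; the only delicate point is the cyclic (wrap-around) nature of the last constraint $a_{\omega_{n-1}\omega_0}=1$, which is exactly what makes a trace rather than an ordinary matrix entry appear, together with the observation that finiteness of the alphabet guarantees $\Fix(\sigma_A^n)$ is finite so that the count is meaningful. As a sanity check one may note that taking $A$ to be the all-ones matrix recovers $\#\Fix(\sigma_N^n)=N^n$.
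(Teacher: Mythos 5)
Your proof is correct and follows essentially the same route as the paper: both identify fixed points of $\sigma_A^n$ with admissible cyclic blocks $(\omega_0,\dots,\omega_{n-1})$ and recognise their number as $\tr A^n$. The only cosmetic difference is that the paper establishes $(A^n)_{ij}$ as the number of admissible paths from $i$ to $j$ by induction, whereas you expand the entries of $A^n$ directly; this is the same bookkeeping.
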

\begin{proof}
  Fix two symbols $i,j \in \{0,\ldots,N-1\}$.
  We will count the number $N_{ij}^n$ of sequences $(\omega_k)_{k\in\Z}$ with $\omega_0=i$ and $\omega_n=j$.
  For $n=1$ we have obviously $N_{ij}^1 = a_{ij}$.
  It is easy to see by induction that $N_{ij}^{n+1} = \sum_{k=0}^{N-1} N_{ik}^n a_{kj}$,
  it is just a sum of number of sequences with $\omega_0=i$ and arbitrary $\omega_k$ followed by $\omega_{k+1} = j$.
  Observe, that if we write $a_{ij}^n$ for the element in the $i$-th row and $j$-th column of $A^n$,
  we have $N_{ij}^n = a_{ij}^n$.

  Each sequence with $\omega_0=i=\omega_n$ counted in $N_{ii}^n = a_{ii}^n$ gives rise
  to exactly one fixed point of $\sigma_A^n$, because for such a fixed point $\omega_k = \omega_{k+n}$
  and the terms $\omega_0,\ldots,\omega_{n-1}$ define it uniquely.
  Summing over all $i=0,\ldots,N-1$ finishes the proof.
\end{proof}

%===================================================================================================
%===========================  Section  =============================================================
%===================================================================================================
\vspace{1cm}
\bigskip

\section{Synchronization zeta function and growth of synchronization points} \label{sec:dichotomy}

\subsection{P\'olya-Carslon dichotomy}

Our main result for the synchronization zeta function $S_{\alpha, \beta}(z)$
in Theorem~\ref{thm:synchronization-dichotomy} below
is inspired by and proved in analogy with the following

\begin{thm}[\protect{\cite[Proposition 3.4]{FelsKlop}}] \label{thm:FelsKlop-3.4}
  Let $G$ be a torsion-free abelian group of finite rank~$d\geq 1$,
  and let $(\varphi,\psi)$ be a tame pair of endomorphisms for~$G$.
  Let $\varphi_\mathbb{Q}, \psi_\mathbb{Q} \colon G_\mathbb{Q} \to
  G_\mathbb{Q}$ denote the extensions of $\varphi, \psi$ to the
  divisible hull $G_\mathbb{Q} = \mathbb{Q} \otimes_\mathbb{Z} G \cong \mathbb{Q}^d$
  of~$G$, and suppose that the endomorphisms
  $\varphi_\mathbb{Q}, \psi_\mathbb{Q}$ are simultaneously
  triangularisable.  Let $\xi_1, \ldots, \xi_d$ and
  $\eta_1, \ldots, \eta_d$ denote the eigenvalues of
  $\varphi_\mathbb{Q}$ and $\psi_\mathbb{Q}$ in a fixed algebraic
  closure of the field~$\mathbb{Q}$, including multiplicities,
  ordered so that, for $n \in \mathbb{N}$, the eigenvalues of
  $\varphi_\mathbb{Q}^{\, n} - \psi_\mathbb{Q}^{\, n}$ are
  $\xi_1^{\, n} - \eta_1^{\, n}, \ldots, \xi_d^{\, n} - \eta_d^{\,
    n}$.  Set
  $L = \mathbb{Q}(\xi_1, \ldots, \xi_d, \eta_1, \ldots, \eta_d)$.

  For each $p \in \mathbb{P}$, we fix an embedding
  $L \hookrightarrow \overline{\mathbb{Q}}_p$, where
  $\overline{\mathbb{Q}}_p$ denotes the algebraic closure of
  $\mathbb{Q}_p$, and we write $\lvert \cdot \rvert_p$ for the unique
  prolongation of the $p$-adic absolute value
  to~$\overline{\mathbb{Q}}_p$; this is a way of choosing a particular
  extension of the $p$-adic absolute value to~$L$.  Likewise, we
  choose an embedding $L \hookrightarrow \mathbb{C}$ and denote by
  $\lvert \cdot \rvert_\infty$ the usual absolute value
  on~$\mathbb{C}$.
    \smallskip
    Then there exist subsets $I(p) \subseteq \{1,\ldots,d\}$, for
  $p \in \mathbb{P}$, such that the following hold.
  \begin{enumerate}
  \item[$(1)$] For each $p \in \mathbb{P}$, the polynomials
    $\prod_{i \in I(p)} (X - \xi_i)$ and
    $\prod_{i \in I(p)} (X - \eta_i)$ have coefficients in
    $\mathbb{Z}_p$; in particular,
    $\lvert \xi_i \rvert_p , \lvert \eta_i \rvert_p \le 1$ for
    $i \in I(p)$.
  \item[$(2)$] For each $n \in \mathbb{N}$,
  \begin{equation} \label{eq:Reidemeister-coincidence-product-eigenvalues}
    R(\varphi^n,\psi^n) = \prod_{p \in \mathbb{P}} \prod_{i \in I(p)}
    \lvert \xi_i^{\, n} - \eta_i^{\, n} \rvert_p^{\, -1} =
    \prod_{i=1}^d \lvert \xi_i^{\, n} - \eta_i^{\, n} \rvert_\infty \cdot
    \prod_{p \in \mathbb{P}} \prod_{i \not \in I(p)} \lvert \xi_i^{\, n} -
    \eta_i^{\, n} \rvert_p;
  \end{equation}
    as this number is a positive integer,
    $\lvert \xi_i^{\, n} - \eta_i^{\, n} \rvert_p = 1$ for
    $1 \le i \le d$ for almost all~$p \in \mathbb{P}$.
  \end{enumerate}
\end{thm}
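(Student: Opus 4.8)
Here is the plan.

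\emph{Reduction to cokernels and localization.} Writing $G$ additively, $\alpha$ and $\beta$ are $(\varphi^n,\psi^n)$-conjugate if and only if $\alpha-\beta\in\Im(\psi^n-\varphi^n)$, so $R(\varphi^n,\psi^n)=\#\Coker(\varphi^n-\psi^n\colon G\to G)$; tameness of the pair forces this group to be finite for every $n$, hence forces $\varphi_\mathbb{Q}^{\,n}-\psi_\mathbb{Q}^{\,n}$ to be invertible for every $n$. Put $\Theta_n:=\varphi^n-\psi^n$. Since $\Coker\Theta_n$ is finite it is the direct sum of its $p$-primary parts, and right-exactness of $-\otimes_\mathbb{Z}\mathbb{Z}_p$ identifies the $p$-part with $\Coker(\Theta_{n,p}\colon M_p\to M_p)$, where $M_p:=G\otimes_\mathbb{Z}\mathbb{Z}_p$ is a $\mathbb{Z}_p$-submodule of $\mathbb{Q}_p^d$ spanning it and $\Theta_{n,p}$ is the restriction of $\varphi_\mathbb{Q}^{\,n}-\psi_\mathbb{Q}^{\,n}$. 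Thus $R(\varphi^n,\psi^n)=\prod_{p}\#\Coker\Theta_{n,p}$, a finite product.

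\emph{The local computation.} The heart of the matter is a structural lemma on $M_p$: its maximal divisible $\mathbb{Z}_p$-submodule $D_p$ is a $\mathbb{Q}_p$-subspace of $\mathbb{Q}_p^d$ (a divisible torsion-free module over a DVR is a vector space over the fraction field), it is invariant under $\varphi_p$ and $\psi_p$ because it is canonical, and $F_p:=M_p/D_p$ is a finitely generated free $\mathbb{Z}_p$-module, of rank $d_p:=d-\dim_{\mathbb{Q}_p}D_p$, with induced actions of $\varphi_p$ and $\psi_p$ preserving it — finite generation is where finiteness of the rank of $G$ enters (one compares $M_p$ with the lattice coming from a full-rank free subgroup of $G$). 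Since $\Theta_{n,p}$ is invertible on $\mathbb{Q}_p^d$ it is invertible on the finite-dimensional invariant subspace $D_p$, so $\Coker(\Theta_{n,p}|_{D_p})=0$; the snake lemma applied to $0\to D_p\to M_p\to F_p\to 0$ then gives $\Coker\Theta_{n,p}\cong\Coker(\Theta_{n,p}|_{F_p})$. As $F_p\cong\mathbb{Z}_p^{d_p}$ and $\Theta_{n,p}|_{F_p}$ is injective, the elementary-divisor theorem over $\mathbb{Z}_p$ gives $\#\Coker(\Theta_{n,p}|_{F_p})=\bigl|\det(\Theta_{n,p}|_{F_p})\bigr|_p^{-1}$.

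\emph{Identifying the determinant and assembling.} Via the chosen embedding $L\hookrightarrow\overline{\mathbb{Q}_p}$ the operators $\varphi_p,\psi_p$ remain simultaneously triangularisable over $\overline{\mathbb{Q}_p}$, so the prescribed multiset $\{(\xi_i,\eta_i)\}_{i=1}^d$ is the multiset of Jordan--Hölder composition factors of $\overline{\mathbb{Q}_p}^{\,d}$ over the algebra generated by $\varphi_p,\psi_p$; the submodule $D_p\otimes_{\mathbb{Q}_p}\overline{\mathbb{Q}_p}$ accounts for a sub-multiset, and I let $I(p)\subseteq\{1,\dots,d\}$ index the complementary factors — note $I(p)$ is independent of $n$ because $D_p$ is. Then the induced operators on $F_p\otimes\mathbb{Q}_p=\mathbb{Q}_p^d/D_p$ have eigenvalues $(\xi_i)_{i\in I(p)}$ and $(\eta_i)_{i\in I(p)}$, so $\prod_{i\in I(p)}(X-\xi_i)$ and $\prod_{i\in I(p)}(X-\eta_i)$ are the characteristic polynomials of operators preserving the lattice $F_p$, hence lie in $\mathbb{Z}_p[X]$, and their roots lie in $\mathbb{Z}_p$ — this is $(1)$. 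Moreover $\det(\Theta_{n,p}|_{F_p})=\prod_{i\in I(p)}(\xi_i^{\,n}-\eta_i^{\,n})$, whence $\#\Coker\Theta_{n,p}=\prod_{i\in I(p)}|\xi_i^{\,n}-\eta_i^{\,n}|_p^{-1}$ and, multiplying over $p$, the first equality of $(2)$. For the second, $\prod_{i=1}^d(\xi_i^{\,n}-\eta_i^{\,n})=\det(\varphi_\mathbb{Q}^{\,n}-\psi_\mathbb{Q}^{\,n})$ is a nonzero rational number, so the product formula over $\mathbb{Q}$ turns $\prod_p\prod_{i=1}^d|\xi_i^{\,n}-\eta_i^{\,n}|_p^{-1}$ into $\prod_{i=1}^d|\xi_i^{\,n}-\eta_i^{\,n}|_\infty$; dividing out the factors with $i\notin I(p)$ gives the displayed formula. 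Finally, for all but finitely many $p$ every $\xi_i,\eta_i$ is a $p$-adic unit, so each $|\xi_i^{\,n}-\eta_i^{\,n}|_p\le1$ there while their product equals $|\det(\varphi_\mathbb{Q}^{\,n}-\psi_\mathbb{Q}^{\,n})|_p=1$, forcing each factor to be $1$.

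\emph{Main obstacle.} The one non-formal ingredient is the structural lemma on $M_p=G\otimes\mathbb{Z}_p$: showing that quotienting by the maximal divisible submodule leaves a finitely generated free $\mathbb{Z}_p$-module (this is exactly what makes the determinant/index formula available), together with the Jordan--Hölder bookkeeping needed to produce an index set $I(p)$ compatible with the \emph{prescribed} pairing of the $\xi_i$ with the $\eta_i$. Everything downstream — the cokernel reinterpretation, the primary decomposition, the snake lemma, the elementary-divisor computation, and the product formula — is routine.
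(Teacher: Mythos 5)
This theorem is not proved in the paper at all: it is imported verbatim from \cite[Proposition 3.4]{FelsKlop}, so there is no internal argument to compare yours against; I can only judge the proposal on its merits, and it is essentially a correct reconstruction along the standard route (which is also the spirit of the cited source): the cokernel interpretation $R(\varphi^n,\psi^n)=\#\Coker(\varphi^n-\psi^n)$, the primary decomposition $R(\varphi^n,\psi^n)=\prod_p\#\Coker(\Theta_{n,p})$ on $M_p=G\otimes_\Z\Z_p$, the index formula $\#\Coker=|\det|_p^{-1}$ on the free quotient, Jordan--H\"older bookkeeping to produce $I(p)$ compatibly with the prescribed pairing of $\xi_i$ with $\eta_i$, and the product formula together with the ``all $\xi_i,\eta_i$ are $p$-adic units for almost all $p$'' argument for the second equality and the final claim. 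The one step you leave as a black box is the genuine mathematical content: that $M_p$ splits as (maximal divisible submodule, a $\varphi,\psi$-invariant $\Q_p$-subspace) $\oplus$ (finitely generated free $\Z_p$-module). This is true, but your parenthetical hint (comparing $M_p$ with a full-rank lattice of $G$) is not yet a proof, and it is worth stressing that the analogous statement over $\Z$ is false, so completeness of $\Z_p$ must enter. Two standard ways to close it: (a) prove $\operatorname{Ext}^1_{\Z_p}(\Q_p,\Z_p)=0$ (the equations $a_k-pa_{k+1}=b_k$ are solved by a $p$-adically convergent series) and induct on the rank, splitting off the image of a coordinate projection, which can only be $0$, a copy of $\Z_p$, or $\Q_p$; or (b) use $\Z_p^d\subseteq M_p\subseteq\Q_p^d$ and the classification of $\Z_p$-submodules of $(\Q_p/\Z_p)^d$ as direct sums of at most $d$ cocyclic modules. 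Two harmless inaccuracies to fix in a write-up: the roots of the polynomials in (1) lie in the valuation ring of $\overline{\Q}_p$ (equivalently $|\xi_i|_p,|\eta_i|_p\le 1$), not literally in $\Z_p$; and when you invoke the determinant identity you should note that $\det(\Theta_{n,p}|_{F_p})\in\Q_p$, so the unique extension of $|\cdot|_p$ used for the eigenvalue factors is consistent with the ordinary $p$-adic absolute value used in the index formula.
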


Now we state and prove our main result.

\begin{thm} \label{thm:synchronization-dichotomy}
  Let $\alpha, \beta: X \to X$ be a synchronously tame pair of endomorphisms
  of a compact connected Abelian group of dimension $d \geq 1$ to itself.
  The Pontryagin dual group $\widehat{X} =: G$ is a subgroup of $\Q^d$
  which is torsion-free Abelian group of finite Pr\"ufer rank $d$,
  on which we have induced endomoprhisms $\widehat{\alpha}, \widehat{\beta}: G \to G$.
  Denote by $\widehat{\alpha}_\Q, \widehat{\beta}_\Q$ the extensions of $\widehat{\alpha}, \widehat{\beta}$
  to the divisible hull $G_\Q = \Q \otimes G \cong \Q^d$ and suppose that the endomorphisms
  $\widehat{\alpha}_\Q, \widehat{\beta}_\Q$ are simultaneously triangularizable.
  Let $\xi_1, \dots, \xi_d$ and $\eta_1, \dots, \eta_d$ denote the eigenvalues
  of $\widehat{\alpha}_\Q, \widehat{\beta}_\Q$ in a fixed algebraic closure of $\Q$,
  including multiplicities, ordered so that for $n \in \N$ the eigenvalues of
  $\widehat{\alpha}_\Q^n - \widehat{\beta}_\Q^n$ are $\xi_1^n - \eta_1^n, \dots, \xi_d^n - \eta_d^n$.
  Then the Reidemeister coincidence numbers $R(\widehat{\alpha}_\Q, \widehat{\beta}_\Q)$
  can be computed using \eqref{eq:Reidemeister-coincidence-product-eigenvalues}.

  \begin{enumerate}
    \item[\rm (i)] Assume that the primes $p$ that contribute non-trivial factors
      $\prod_{i \not \in I(p)} \lvert \xi_i^{\, n} - \eta_i^{\, n} \rvert_p$
      to the product on the far right-hand side of \eqref{eq:Reidemeister-coincidence-product-eigenvalues}
      form a finite subset $\mathbf{P} \subseteq \mathbb{P}$ and that
      $|\xi_i|_\infty \neq |\eta_i|_\infty$ for $1 \leq i \leq d$.
      Then the synchronization zeta function $S_{\alpha, \beta}(z)$  is either a rational function or it has a natural boundary on its radius of convergence.
      Furthermore, the latter occurs if and only if $|\xi_i|_p = |\eta_i|_p$ for some
      $p \in \mathbf{P}$ and $i \notin I(p)$.

    \item[\rm (ii)] With the assumption of finiteness of $\mathbf{P}$ and
      $|\xi_i|_\infty \neq |\eta_i|_\infty$ for $1 \leq i \leq d$
      the growth rate $S^\infty(\alpha, \beta)$ exists and is given by
      \begin{align} \label{eq:S-infty-formula}
      \begin{split}
        S^\infty(\alpha, \beta) &= S^\infty_+(\alpha, \beta) = S^\infty_-(\alpha, \beta) = \\
        &= \prod_{p \in \mathbf{P}} \prod_{i \in S^*(p)} \max \{ \lvert \xi_{i} \rvert_p,  |\eta_{i}|_p\} \cdot
             \prod_{p \in \mathbf{P}} \, \prod_{i \in S(p)\setminus S^*(p)}\lvert \eta_{i} \rvert_p  \cdot
             \prod_{i=1}^{d} \max\{|\xi_{i}|_\infty, |\eta_{i}|_\infty\},
      \end{split}
      \end{align}
      where $I(p)$ is as in {\rm Theorem \ref{thm:FelsKlop-3.4}}, for $p \in \mathbf{P}$ we write $S(p) = \{1,\dots,d\}\setminus I(p)$
      and $S^*(p) = \big\{i \in S(p) \mid \lvert \xi_{i} \rvert_p \neq |\eta_{i}|_p \big\}$
  \end{enumerate}
\end{thm}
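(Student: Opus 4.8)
The plan is to transfer the question to the Pontryagin dual, identify the numbers $\# S(\alpha^n,\beta^n)$ with Reidemeister coincidence numbers so that Theorem~\ref{thm:FelsKlop-3.4} applies verbatim, and then run a Bell--Miles--Ward style analysis of the resulting finite product over places. First I carry out the reduction. Since $X$ is abelian, $S(\alpha^n,\beta^n)=\Ker(\alpha^n-\beta^n)$ for the endomorphism $\alpha^n-\beta^n\colon X\to X$, $x\mapsto\alpha^n(x)-\beta^n(x)$, and this kernel is finite by synchronous tameness. Under Pontryagin duality $\alpha^n-\beta^n$ is dual to $\wh\alpha^{\,n}-\wh\beta^{\,n}\colon G\to G$, so Lemma~\ref{thm:FelHil-dual} gives $\Ker(\alpha^n-\beta^n)\cong\wh\Coker(\wh\alpha^{\,n}-\wh\beta^{\,n})$; comparing cardinalities and using that for an abelian group a $(\varphi,\psi)$-conjugacy class is a coset of $\Im(\varphi-\psi)$, whence $R(\varphi,\psi)=\#\Coker(\varphi-\psi)$, I obtain $\# S(\alpha^n,\beta^n)=R(\wh\alpha^{\,n},\wh\beta^{\,n})$ for all $n$. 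Thus $(\wh\alpha,\wh\beta)$ is a tame pair on the torsion-free rank-$d$ group $G$, $S_{\alpha,\beta}(z)=R_{\wh\alpha,\wh\beta}(z)$, and Theorem~\ref{thm:FelsKlop-3.4} yields \eqref{eq:Reidemeister-coincidence-product-eigenvalues}.

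Next I factor the counting sequence according to the rightmost expression in \eqref{eq:Reidemeister-coincidence-product-eigenvalues}, using the finiteness of $\mathbf P$. For $i\in S(p)$ with $|\xi_i|_p\neq|\eta_i|_p$ the ultrametric property \eqref{eq:padic-triangle} gives $|\xi_i^{\,n}-\eta_i^{\,n}|_p=\max\{|\xi_i|_p,|\eta_i|_p\}^{\,n}$, while for $i\in S(p)\setminus S^*(p)$ one has $|\xi_i^{\,n}-\eta_i^{\,n}|_p=|\xi_i|_p^{\,n}\,|\nu_{p,i}^{\,n}-1|_p$ with $\nu_{p,i}:=\eta_i/\xi_i$, $|\nu_{p,i}|_p=1$ (note $\xi_i,\eta_i\neq0$ there by tameness); moreover tameness forces $\xi_i^{\,n}\neq\eta_i^{\,n}$ for all $i,n$, so each $\nu_{p,i}$ is not a root of unity. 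The archimedean part is routed through the determinant: $\prod_{i=1}^d|\xi_i^{\,n}-\eta_i^{\,n}|_\infty=|\det(\wh\alpha_\Q^{\,n}-\wh\beta_\Q^{\,n})|=\bigl|\sum_{T\subseteq\{1,\dots,d\}}(-1)^{d-|T|}\mu_T^{\,n}\bigr|$, where $\mu_T:=\prod_{i\in T}\xi_i\prod_{i\notin T}\eta_i$; because $|\xi_i|_\infty\neq|\eta_i|_\infty$ for every $i$, the index $T_*:=\{i:|\xi_i|_\infty>|\eta_i|_\infty\}$ is the unique one with $|\mu_T|_\infty$ maximal, $\mu_{T_*}\neq0$ is real, and — using that the multiset $\{(\xi_i,\eta_i)\}$ is Galois-stable, being determined by $\det(XI-\wh\alpha_\Q-t\wh\beta_\Q)\in\Q[X,t]$, so that complex-conjugate pairs contribute strictly positive factors while real indices contribute $\pm\operatorname{sign}(\cdot)^{\,n}$ — the sign $\tau(n)\in\{\pm1\}$ of $\det(\wh\alpha_\Q^{\,n}-\wh\beta_\Q^{\,n})$ has period $\le2$ for all $n\ge1$ with no exceptional values. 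Collecting, $\# S(\alpha^n,\beta^n)=D(n)\,E(n)$ with
\[
  D(n)=\tau(n)\sum_{T}(-1)^{d-|T|}(c\,\mu_T)^{\,n},\qquad
  E(n)=\prod_{p\in\mathbf P}\prod_{i\in S(p)\setminus S^*(p)}|\nu_{p,i}^{\,n}-1|_p ,
\]
where $c:=\prod_{p\in\mathbf P}\prod_{i\in S(p)}\max\{|\xi_i|_p,|\eta_i|_p\}>0$.

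For part (i): if $S(p)=S^*(p)$ for all $p\in\mathbf P$ then $E\equiv1$, so $\# S(\alpha^n,\beta^n)=D(n)$ is a signed sum of $n$-th powers and, summing the logarithmic series, $S_{\alpha,\beta}(z)$ becomes a finite product of factors $(1-c'z)^{\pm1}$, hence rational. Otherwise $E$ is a nonempty product of the exact shape of Lemma~\ref{thm:BMW-lemma17} (the $\nu_{p,i}$ lie in the number field $L$ of Theorem~\ref{thm:FelsKlop-3.4}, have $p$-adic absolute value $1$, and are not roots of unity), so $F(w):=\sum_{n\ge1}E(n)w^n$ has radius of convergence $1$ and the unit circle as a natural boundary. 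Then, up to a global sign and a global replacement of each $c\mu_T$ by $\pm c\mu_T$ (same sign throughout), $Z_{\wh\alpha,\wh\beta}(z)=\sum_T(-1)^{d-|T|}F(c\mu_T z)$; since $T_*$ is the unique maximal-modulus index, the terms with $T\neq T_*$ sum to a function analytic on a disc of radius strictly larger than $\rho:=1/|c\mu_{T_*}|_\infty$, whereas $F(c\mu_{T_*}z)$ has a natural boundary on $\{|z|=\rho\}$; hence so does $Z_{\wh\alpha,\wh\beta}(z)$, and by Lemma~\ref{thm:logarithmic-derivative} so does $S_{\alpha,\beta}(z)$. As rationality and having a natural boundary are mutually exclusive, this gives the asserted equivalence. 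For part (ii), $|D(n)|\sim|c\mu_{T_*}|_\infty^{\,n}$ (unique dominant term) and $1\ge E(n)\ge\mathrm{const}\cdot n^{-m}$ by Lemma~\ref{thm:FelSlo-lemma2.4}, where $m$ is the number of pairs in the product defining $E$; hence $\# S(\alpha^n,\beta^n)^{1/n}\to c\,|\mu_{T_*}|_\infty$, and expanding $|\mu_{T_*}|_\infty=\prod_{i=1}^d\max\{|\xi_i|_\infty,|\eta_i|_\infty\}$ together with $c$, using $\max\{|\xi_i|_p,|\eta_i|_p\}=|\eta_i|_p$ on $S(p)\setminus S^*(p)$, reproduces \eqref{eq:S-infty-formula}.

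The step I expect to be the main obstacle is the propagation of the natural boundary from $F$ to $Z_{\wh\alpha,\wh\beta}$ (and thence to $S_{\alpha,\beta}$), since a finite sum of functions each possessing a natural boundary need not retain one. This is precisely where the hypothesis $|\xi_i|_\infty\neq|\eta_i|_\infty$ for all $i$ is decisive: it makes the dominant exponential rate $|c\mu_{T_*}|_\infty$ be realized by a single term $\mu_{T_*}$ in the expansion of $D(n)$, so that near the circle of convergence $Z_{\wh\alpha,\wh\beta}$ differs from $\pm F(c\mu_{T_*}z)$ by a function analytic there and no cancellation is possible. A secondary technical point is the Galois-stability argument pinning down $\tau(n)$ with period $\le2$ and no exceptional $n$, which is what makes $S_{\alpha,\beta}$ genuinely rational in the first alternative rather than merely of finite analytic complexity.
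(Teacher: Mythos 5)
Your proposal is correct and follows essentially the same route as the paper's proof: reduce via Pontryagin duality and Lemma~\ref{thm:FelHil-dual} to $\# S(\alpha^n,\beta^n)=R(\wh\alpha^{\,n},\wh\beta^{\,n})$, apply Theorem~\ref{thm:FelsKlop-3.4}, split the product into a signed exponential-sum part and the unit-norm factor $f(n)=E(n)$, and conclude with Lemma~\ref{thm:BMW-lemma17} plus Lemma~\ref{thm:logarithmic-derivative} for the natural boundary (isolated by the unique dominant term coming from $|\xi_i|_\infty\neq|\eta_i|_\infty$) and Lemma~\ref{thm:FelSlo-lemma2.4} for the growth rate. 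Your explicit subset expansion of the determinant and the Galois-stability argument pinning down the sign are just a more detailed rendering of the paper's conjugate-pairing step, not a different method.
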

\begin{proof}
  We can write
  \begin{align} \label{eq:synchronization=reidemeister-of-dual}
  \begin{split}
    \# S(\alpha^n, \beta^n) &= \# \Ker(\alpha^n - \beta^n)
    = \# \widehat{\Coker}(\widehat{\alpha}^n - \widehat{\beta}^n) = \\
    &= \# \Coker(\widehat{\alpha}^n - \widehat{\beta}^n) = R(\widehat{\alpha}^n, \widehat{\beta}^n)
  \end{split}
  \end{align}
  The endomoprhisms $\widehat{\alpha}, \widehat{\beta}$ fulfill assumptions of \cite[Theorem~4.3]{FelsKlop}
  and we can follow the logic of its proof to show (i).
  Let us use notation of Theorem \ref{thm:FelsKlop-3.4}.
  For $p \in \mathbf{P}$ we write
  \begin{align*}
    S(p) = \{1,\dots,d\}\setminus I(p), \quad S^*(p) = \big\{i \in S(p) \mid \lvert \xi_{i} \rvert_p \neq |\eta_{i}|_p \big\}
  \end{align*}
  and set
  \begin{align*}
     b := \prod_{p \in \mathbf{P}} \prod_{i \in S^*(p)} \max \{ \lvert \xi_{i} \rvert_p, |\eta_i|_p\}, \qquad
     \eta := \prod_{p \in \mathbf{P}} \, \prod_{i \in S(p) \setminus S^*(p)}\lvert \eta_{i} \rvert_p.
  \end{align*}
  We will show that
  \begin{align*}
    R(\widehat{\alpha}^n, \widehat{\beta}^n) = g(n) \cdot f(n)
  \end{align*}
  where
  \begin{gather*}
      g(n) := b^n\cdot\eta^n \cdot \prod_{i=1}^{d} \lvert \xi_{i}^{\, n} - \eta_{i}^{\, n}
      \rvert_\infty \\
      f(n) := \prod_{p \in \mathbf{P}} \, \prod_{i \in S(p)\setminus
        S^*(p)} \big\vert (\xi_{i} \eta_{i}^{\, -1})^n - 1 \big\vert_p.
  \end{gather*}
  Starting with \eqref{eq:Reidemeister-coincidence-product-eigenvalues} we have
  \begin{align}\label{eq:Reid}
  \begin{split}
    R(\widehat{\alpha}^n,\widehat{\beta}^n)
        & =  \prod_{i=1}^d \lvert \xi_i^{\, n} - \eta_i^{\, n} \rvert_\infty \cdot
          \prod_{p \in \mathbf{P}} \, \prod_{i  \in S(p)} \lvert \xi_i^{\, n} -
          \eta_i^{\, n} \rvert_p \\
        & = \prod_{i=1}^d \lvert \xi_i^{\, n} - \eta_i^{\, n}
          \rvert_\infty \cdot \prod_{p \in \mathbf{P}} \, \prod_{i \in S^*(p)}
          \max\{\lvert \xi_i \rvert_p^n, \lvert \eta_i \rvert_p^n\} \cdot
          \prod_{p \in \mathbf{P}} \, \prod_{i \in S(p)\setminus
          S^*(p)}\lvert \xi_i^{\, n} - \eta_i^{\, n} \rvert_p \\
        & = \prod_{i=1}^d \lvert \xi_i^{\, n} - \eta_i^{\, n}
          \rvert_\infty \cdot b^n \cdot \eta^n \cdot \prod_{p \in
          \mathbf{P}} \, \prod_{i \in S(p)\setminus S^*(p)} \big\vert (\xi_i
          \eta_i^{\, -1})^n - 1 \big\vert_p \\
        & = g(n) \cdot f(n).
  \end{split}
  \end{align}

  Now we open up the product $\prod_{i=1}^{d} |\xi_{i}^{\, n} - \eta_{i}^{\, n}|_\infty$.
  If at least one of $\xi_{i}, \eta_{i}$ is complex so that they are paired with
  $\xi_{j} = \overline{\xi_{i}}, \eta_{j} = \overline{\eta_{i}}$ for suitable $j \neq i$
  as discussed above, we see that
  \[
    |\xi_{i}^{\, n} - \eta_{i}^{\, n}|_\infty \cdot |\xi_{j}^{\, n} - \eta_{j}^{\, n}|_\infty
    = |\xi_{i}^{\, n} - \eta_{i}^{\, n}|_\infty^2
    = (\xi_{i}^{\, n} - \eta_{i}^{\, n}) \cdot (\overline{\xi_{j}}^{\, n} - \overline{\eta_{j}}^{\, n})
  \]
  If $\xi_{i}, \eta_{i}$ are both real eigenvalues, not paired up with another pair of eigenvalues,
  then we have
  \begin{gather*}
    |\xi_{i}^{\, n} - \eta_{i}^{\, n}|_\infty = \delta_{1,i}^n - \delta_{2,i}^n,\\
    \delta_{1,i} := \max \{|\xi_{i}|_\infty, |\eta_{i}|_\infty\}, \quad
    \delta_{2,i} := \frac{\xi_{i} \cdot \eta_{i}}{\delta_{1,i}}\\
  \end{gather*}
  We can now expand the product $\prod_{i=1}^{d} |\xi_{i}^{\, n} - \eta_{i}^{\, n}|_\infty$ using
  an appropriate symmetric polynomial to express
  \begin{align} \label{eq:gn-as-sum}
    g(n) = \sum_{j \in J} c_j w_j^n
  \end{align}
  where $J$ is a finite index set, $c_j = \pm 1$ and $w_j \in \C \setminus \{0\}$.

  Consequently, the synchronization zeta function can be written as
  \begin{align*}
    S_{\alpha, \beta}(z) = \exp \left( \sum_{j \in J} c_j \sum_{n=1}^\infty \frac{f(n) \cdot (w_j z)^n}{n} \right).
  \end{align*}
  If $S(p) = S^*(p)$ for all $p \in \mathbf{P}$ then $f(n) = 1$ for all $n$
  and straightforward computation leads us to
  \begin{align*}
     S_{\alpha, \beta}(z) = \prod_{j \in J} (1 - w_j z)^{-c_j}
  \end{align*}
  which is a rational function.

  Now suppose that for some $p \in \mathbf{P}$ we have $S(p) \neq S^*(p)$.
  By Lemma \ref{thm:logarithmic-derivative}, it suffices to show that
  \begin{align*}
    Z_{\widehat{\alpha}, \widehat{\beta}}(z) := \sum_{j \in J} c_j \sum_{n=1}^\infty f(n) \cdot (w_j z)^n
  \end{align*}
  has a natural boundary at its radius of convergence.
  By Lemma \ref{thm:BMW-lemma17} we have $\limsup_{n\to\infty} f(n)^{1/n} = 1$.
  Hence, for each $j \in J$, the series $\sum_{n\geq 1} f(n) \cdot (w_j s)^n$
  has the radius of convergence $|w_j|_\infty^{-1}$.
  As  $|\xi_i|_\infty \neq |\eta_i|_\infty$ for $1 \leq i \leq d$, there is a dominant term $w_m$ in the expansion
  \eqref{eq:gn-as-sum}   for which
  \begin{align} \label{eq:domina}
    |w_m|_\infty =  b \cdot \eta \cdot
     \prod_{i=1}^{d} \max \big\{ |\xi_{i}|_\infty, |\eta_{i}|_\infty \big\} > |w_j|_\infty, \quad j \neq m.
  \end{align}
  Thus it suffices to show that the series $\sum_{n=1} f(n) \cdot (w_m z)^n$ has  its circle  of convergence as a natural boundary. This is the case, because the series $\sum_{n=1} f(n) \cdot (z)^n$ has the unit circle as a natural boundary by Lemma \ref{thm:BMW-lemma17}.

  To prove (ii) observe that the assumption of finiteness of $\mathbf{P}$ and inequality
  $|\xi_i|_\infty \neq |\eta_i|_\infty$ for $1 \leq i \leq d$ is the same as in (i).
  From what has been said above we have $\limsup_{n \to \infty} f(n)^{1/n} = 1$.
  From Lemma \ref{thm:FelSlo-lemma2.4} we have also $\liminf_{n \to \infty} f(n)^{1/n} = 1$.
  Therefore $\lim_{n \to \infty} f(n)^{1/n} = 1$.

  Using formulae \eqref{eq:synchronization=reidemeister-of-dual} and \eqref{eq:Reid}
  for the Reidemeister coincidence numbers $R(\widehat{\alpha}^n,\widehat{\beta}^n)$,
  and the formula \eqref{eq:domina} for dominant term $w_m$ we get
  \begin{align}
    S^\infty(\alpha, \beta) = R^\infty(\widehat{\alpha}, \widehat{\beta})  = |w_m|_\infty
  \end{align}
  which is exactly \eqref{eq:S-infty-formula}.
\end{proof}

\begin{col}
  Consider the $d$-dimensional torus $X = \T^d$, $d \geq 1$ and a synchronously tame pair of endomorphisms
  $\alpha, \beta: \T^d \to \T^d$.
  The Pontryagin dual $\widehat{\T^d} = \Z^d$.
  Using notation of {\rm Theorem~\ref{thm:synchronization-dichotomy}}
  let $\widehat{\alpha}, \widehat{\beta}: \Z^d \to \Z^d$ denote the endomorphisms induced
  on the finite-dimensional dual group, assume that the extensions
  $\widehat{\alpha}_\Q, \widehat{\beta}_\Q$ are simultaneously triangularizable
  and their ordered eigenvalues are $\xi_1, \ldots, \xi_d$ and $\eta_1, \ldots, \eta_d$, respectively.

  If $|\xi_i|_\infty \neq |\eta_i|_\infty$ for $1 \leq i \leq d$ then the synchronization zeta function
  $S_{\alpha, \beta}(z)$ is rational and the growth rate $S^\infty(\alpha, \beta)$ exists and is given by
  \begin{equation*}
    S^\infty(\alpha, \beta) \ = \ \prod_{i=1}^d \max \{ |\xi_i|_\infty, |\eta_i|_\infty \}.
  \end{equation*}
\end{col}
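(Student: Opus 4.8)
The plan is to derive the corollary from Theorem~\ref{thm:synchronization-dichotomy}, the only extra input being that for the torus everything in sight is integral. Since $\widehat{\T^d} = \Z^d$ is free abelian, the induced endomorphisms $\widehat\alpha, \widehat\beta$ are given by integer matrices, and so are their $\Q$-extensions $\widehat\alpha_\Q, \widehat\beta_\Q$; thus $\prod_{i=1}^d(X-\xi_i)$ and $\prod_{i=1}^d(X-\eta_i)$ are the characteristic polynomials of $\widehat\alpha_\Q$ and $\widehat\beta_\Q$, hence elements of $\Z[X]$, and in particular every $\xi_i$ and $\eta_i$ is an algebraic integer, so $|\xi_i|_p \le 1$ and $|\eta_i|_p \le 1$ for every prime $p$ and every prolongation of $|\cdot|_p$.

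Next I apply Theorem~\ref{thm:FelsKlop-3.4} to the pair $(\widehat\alpha, \widehat\beta)$ --- which is tame by \eqref{eq:synchronization=reidemeister-of-dual} together with synchronous tameness of $(\alpha,\beta)$ --- making the choice $I(p) = \{1,\dots,d\}$ for every $p \in \mathbb{P}$. This is admissible: condition~(1) of that theorem requires only that $\prod_{i\in I(p)}(X-\xi_i)$ and $\prod_{i\in I(p)}(X-\eta_i)$ have coefficients in $\Z_p$, which holds since by Step~1 they even lie in $\Z[X]$. With this choice $S(p) := \{1,\dots,d\}\setminus I(p) = \emptyset$ for every $p$, so the far-right product in \eqref{eq:Reidemeister-coincidence-product-eigenvalues} is empty, and combining \eqref{eq:synchronization=reidemeister-of-dual} with \eqref{eq:Reidemeister-coincidence-product-eigenvalues} gives $\# S(\alpha^n,\beta^n) = R(\widehat\alpha^n,\widehat\beta^n) = \prod_{i=1}^d |\xi_i^n - \eta_i^n|_\infty$. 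Consequently the set $\mathbf{P}$ of primes contributing a non-trivial far-right factor is empty, hence finite.

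Both standing hypotheses of Theorem~\ref{thm:synchronization-dichotomy} are now in force: $\mathbf{P}$ is finite (it is empty) and $|\xi_i|_\infty \neq |\eta_i|_\infty$ for $1 \le i \le d$ by assumption. Part~(i) then gives that $S_{\alpha,\beta}(z)$ is either rational or has a natural boundary at its radius of convergence, with the natural-boundary case occurring if and only if some $p \in \mathbf{P}$ and $i \notin I(p)$ satisfy $|\xi_i|_p = |\eta_i|_p$; since $\mathbf{P} = \emptyset$ this cannot happen, so $S_{\alpha,\beta}(z)$ is rational. Part~(ii) gives that $S^\infty(\alpha,\beta)$ exists and equals the right-hand side of \eqref{eq:S-infty-formula}; here $S(p) = S^*(p) = \emptyset$ for every $p$, so the first two products in \eqref{eq:S-infty-formula} are empty, leaving $S^\infty(\alpha,\beta) = \prod_{i=1}^d \max\{|\xi_i|_\infty, |\eta_i|_\infty\}$, as claimed.

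The only \emph{genuinely delicate} point is the legitimacy of taking $I(p) = \{1,\dots,d\}$ in Theorem~\ref{thm:FelsKlop-3.4}, and this is exactly what integrality of $\widehat\alpha, \widehat\beta$ buys us --- no $p$-adic contribution survives. If one prefers to bypass Theorem~\ref{thm:synchronization-dichotomy} entirely, the identity $\# S(\alpha^n,\beta^n) = \prod_{i=1}^d |\xi_i^n - \eta_i^n|_\infty$ can be handled directly: a symmetric-function expansion of the product (as in the proof of Theorem~\ref{thm:synchronization-dichotomy}) rewrites it as $\sum_{j \in J} c_j w_j^n$ with $c_j = \pm 1$ and $w_j \in \C \setminus \{0\}$, whence $S_{\alpha,\beta}(z) = \prod_{j\in J}(1 - w_j z)^{-c_j}$ is rational, and the hypothesis $|\xi_i|_\infty \neq |\eta_i|_\infty$ forces the frequency of modulus $\prod_{i=1}^d \max\{|\xi_i|_\infty, |\eta_i|_\infty\}$ to be strictly dominant, hence to equal the growth rate by the Cauchy--Hadamard theorem.
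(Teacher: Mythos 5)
Your proposal is correct in substance, but your primary route differs from the paper's. The paper proves the corollary directly: it uses \eqref{eq:synchronization=reidemeister-of-dual} to write $\# S(\alpha^n,\beta^n)=\#\Coker(\widehat{\alpha}^n-\widehat{\beta}^n)=\lvert\det(\widehat{\alpha}_\Q^n-\widehat{\beta}_\Q^n)\rvert_\infty=\prod_{i=1}^d\lvert\xi_i^n-\eta_i^n\rvert_\infty$, then expands this product into $\sum_{j\in J}c_jw_j^n$ exactly as in the proof of Theorem~\ref{thm:synchronization-dichotomy}, obtains $S_{\alpha,\beta}(z)=\prod_{j\in J}(1-w_jz)^{-c_j}$ by direct summation, and reads off the growth rate from the dominant term $w_m$ (this is precisely the alternative you sketch in your final paragraph). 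You instead specialize Theorem~\ref{thm:synchronization-dichotomy} itself, arguing that integrality of $\widehat{\alpha},\widehat{\beta}$ lets one take $I(p)=\{1,\dots,d\}$, hence $\mathbf{P}=\emptyset$, so rationality and the growth formula drop out of parts (i) and (ii) with the $p$-adic products empty. That is a clean way to see the corollary as a degenerate case of the dichotomy, and it makes transparent why no natural boundary can occur; the paper's direct computation has the advantage of not invoking the choice of the sets $I(p)$ at all and of producing the explicit rational form of the zeta function.

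One caveat on the delicate point you yourself flag: Theorem~\ref{thm:FelsKlop-3.4} asserts the \emph{existence} of subsets $I(p)$ satisfying (1) and (2) jointly; verifying condition (1) alone for $I(p)=\{1,\dots,d\}$ does not by itself entitle you to use formula \eqref{eq:Reidemeister-coincidence-product-eigenvalues} with that choice, since (2) is the substantive identity. In the torus case the choice is indeed legitimate, but the honest justification is the one the paper's computation supplies: $R(\widehat{\alpha}^n,\widehat{\beta}^n)=\#\Coker(\widehat{\alpha}^n-\widehat{\beta}^n)=\lvert\det(\widehat{\alpha}^n-\widehat{\beta}^n)\rvert_\infty=\prod_{i=1}^d\lvert\xi_i^n-\eta_i^n\rvert_\infty$, and since this determinant is a nonzero rational integer, the product formula gives $\prod_{i=1}^d\lvert\xi_i^n-\eta_i^n\rvert_\infty=\prod_{p}\prod_{i=1}^d\lvert\xi_i^n-\eta_i^n\rvert_p^{-1}$, which is exactly (2) with $I(p)=\{1,\dots,d\}$. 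Add that one line and your specialization argument is complete; as written, the step is an overreach, though easily repaired.
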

\begin{proof}
  It is convenient to use \eqref{eq:synchronization=reidemeister-of-dual} as follows:
  \begin{align*}
  \begin{split}
    \# S(\alpha^n, \beta^n) &= \# \Ker(\alpha^n - \beta^n)
    = \# \widehat{\Coker}(\widehat{\alpha}^n - \widehat{\beta}^n) = \\
    &= \# \Coker(\widehat{\alpha}^n - \widehat{\beta}^n)
    = \left| \det \left( \widehat{\alpha}_\Q^n - \widehat{\beta}_\Q^n \right) \right|_\infty \\
    &= \prod_{i=1}^d | \xi_i^n - \eta_i^n |_\infty
  \end{split}
  \end{align*}
  We open up the absolute values in the same way as in Theorem~\ref{thm:synchronization-dichotomy}
  to get \eqref{eq:gn-as-sum} and we obtain
  \begin{equation} \label{eq:synchronization-on-Td-sum}
    \# S(\alpha^n, \beta^n) = \sum_{j \in J} c_j w_j^n
  \end{equation}
  where $J$ is a finite set, $c_j = \pm 1$ and $w_j \in \C \setminus \{0\}$.
  Straightforward computation (cf. Example~\ref{ex:synchronization-on-T2}) leads us to
  \begin{equation*}
    S_{\alpha, \beta}(z) \ = \ \prod_{j \in J} (1 - w_j)^{-c_j}
  \end{equation*}
  which is a rational function.

  As $|\xi_i|_\infty \neq |\eta_i|_\infty$ for all $1 \leq i \leq d$, there is a dominant term $w_m$
  in the expansion \eqref{eq:synchronization-on-Td-sum}, for which
  \begin{equation*}
    |w_m|_\infty = \prod_{i=1}^d \max \{ |\xi_i|_\infty, |\eta_i|_\infty \}
  \end{equation*}
  and $|w_m|_\infty > |w_j|_\infty$ for $j \neq m$.
  Using \eqref{eq:synchronization-on-Td-sum} we get $S^\infty(\alpha, \beta) = |w_m|_\infty$.
\end{proof}

\subsection{Examples}

\begin{ex} \label{ex:synchronization-on-T1}
  Let $X = S^1$ be the unit circle on the complex plane.
   Consider maps $\alpha(z) := z^{d_\alpha}, \beta(z) := z^{d_\beta}, d_\alpha, d_\beta \in \Z$, $|z| = 1$, of $S^1$ to itself.
  If $d_\alpha^n = d_\beta^n$ then $S(\alpha^n, \beta^n) = S^1$, hence $\# S(\alpha^n, \beta^n) = \infty$.
  Now assume that $d_\alpha^n \neq d_\beta^n$. We can write
  \begin{align*}
    z \in S(\alpha^n, \beta^n) \ &\Leftrightarrow \ z^{d_\alpha^n} = z^{d_\beta^n}
      \ \Leftrightarrow \ z^{d_\beta^n} \left( z^{d_\alpha^n - d_\beta^n} - 1 \right) = 0
      \ \Leftrightarrow \  z^{d_\alpha^n - d_\beta^n} - 1 = 0.
  \end{align*}
  We conclude that  $  \# S(\alpha^n, \beta^n) =|d_\alpha^n - d_\beta^n|$ .
  Therefore
  \begin{align*}
    \# S(\alpha^n, \beta^n) = \begin{cases}
      |d_\alpha^n - d_\beta^n|, & d_\alpha^n \neq d_\beta^n \\
      \infty, & d_\alpha^n = d_\beta^n
    \end{cases}.
  \end{align*}
  Consequently, the pair $\alpha, \beta$  is  synchronously tame precisely when $|d_\alpha| \neq |d_\beta|$.
  Now we are able to compute the synchronization zeta function assuming the pair $\alpha, \beta$ is  synchronously tame.
  Denote $d_1 := \max \{ |d_\alpha|, |d_\beta| \}, d_2 := d_\alpha d_\beta / d_1$.
  Then  $|d_\alpha^n - d_\beta^n| = d_1^n - d_2^n$.
  We have
  \begin{align*}
    S_{\alpha, \beta}(z) &= \exp \left( \sum_{n=1}^\infty \frac{|d_\alpha^n - d_\beta^n|}{n} z^n \right) \\
    &= \exp \left( \sum_{n=1}^\infty \frac{(d_1 z)^n}{n} - \sum_{n=1}^\infty \frac{(d_2 z)^n}{n} \right) \\
    &= \exp \big( -\ln (1 - d_1 z) + \ln (1 - d_2 z) \big) \\
    &= \frac{1 - d_2 z}{1 - d_1 z}.
  \end{align*}
  Let us compute the growth rate
  \begin{align*}
    S^\infty(\alpha, \beta) = \lim_{n \to \infty} \sqrt[n]{d_1^n - d_2^n}
      = \lim_{n \to \infty} \sqrt[n]{d_1^n} \cdot \lim_{n \to \infty} \sqrt[n]{1 - \left( \frac{d_2}{d_1} \right)^n}
      = d_1.
  \end{align*}
\end{ex}

\begin{ex} \label{ex:synchronization-on-T2}
  Consider $2$-dimensional torus $\T^2$ and its two hyperbolic automorphisms
  \begin{align*}
    \alpha = \begin{pmatrix} \lambda & 0 \\ 0 & \lambda^{-1} \end{pmatrix}, \quad
    \beta = \begin{pmatrix} \mu & 0 \\ 0 & \mu^{-1} \end{pmatrix}, \quad
    \lambda > \mu > 1, \quad \lambda, \mu \in \R.
  \end{align*}
  For any $x \in \T^2$ we have
  \begin{align*}
    \alpha^n x = \beta^n x \ \Leftrightarrow \ (\alpha^n - \beta^n) x = 0
  \end{align*}
  therefore
  \begin{align*}
    \# S(\alpha^n, \beta^n) &= \# \Ker(\alpha^n - \beta^n) = \# \Ker(\beta^{-n}\alpha^n - \Id) \\
    &= |\det(\beta^{-n}\alpha^n - \Id)| = |\det(\alpha^n - \beta^n)|.
  \end{align*}
  In terms of eigenvalues we get
  \begin{align} \label{eq:toral-auto-synchronization-eigen}
  \begin{split}
    \# S(\alpha^n, \beta^n)
      &= \left| \det \begin{pmatrix} \lambda^n - \mu^n & 0 \\ 0 & \lambda^{-n} - \mu^{-n} \end{pmatrix} \right| \\
      &= \left( \lambda^n - \mu^n \right) \left( \mu^{-n} - \lambda^{-n} \right) \\
      &= \left( \frac{\lambda}{\mu} \right)^n - 2 + \left( \frac{\mu}{\lambda} \right)^n.
  \end{split}
  \end{align}
  The synchronization zeta function becomes rational:
  \begin{align*}
    S_{\alpha, \beta}(z) \ &= \ \exp \left( \sum_{n=1}^\infty \frac{-2 + \left( \frac{\lambda}{\mu} \right)^n + \left( \frac{\mu}{\lambda} \right)^n}{n} z^n \right) \\
    &= \ \exp \left( -2 \sum_{n=1}^\infty \frac{z^n}{n} + \sum_{n=1}^\infty \frac{\left( \frac{\lambda}{\mu} z \right)^n}{n}
      + \sum_{n=1}^\infty \frac{\left( \frac{\mu}{\lambda} z \right)^n}{n} \right) \\
    &= \ \exp \left( 2 \ln (1-z) - \ln \left(1 - \frac{\lambda}{\mu} z\right) - \ln\left(1 - \frac{\mu}{\lambda} z\right) \right) \\
    &= \ \frac{(1-z)^2}{\left(1 - \frac{\lambda}{\mu} z\right) \left(1 - \frac{\mu}{\lambda} z\right)}.
  \end{align*}
  Using \eqref{eq:toral-auto-synchronization-eigen} we can also compute the growth rate.
  Because, by assumption, $\lambda > \mu > 1$, we have
  \begin{align*}
    S^\infty(\alpha, \beta) \ &= \ \lim_{n \to \infty} \sqrt[n]{ -2 + \left( \frac{\lambda}{\mu} \right)^n + \left( \frac{\mu}{\lambda} \right)^n} \ = \ \frac{\lambda}{\mu} \ .
  \end{align*}
\end{ex}

To illustrate emergence of the natural boundary for the synchronization zeta function
we use the following example which was previously analyzed for
Reidemeister coincidence zeta function in \cite[Paragraph~4.3]{FelZie}
and for Artin-Mazur zeta function in \cite[Proposition~2]{EvStanWard}.

\begin{ex}
  Consider the group $G = \Z[\frac{1}{3}]$ and its two endomorphisms
  $\varphi: g \mapsto 6g$ and $\psi: g \mapsto 3g$.
  The unitary dual $\widehat{G}$ is the $1$-dimensional solenoid on which we have two induced
  endomorphisms $\widehat{\varphi}, \widehat{\psi}$, respectively.

  Now by {\rm Lemma \ref{thm:FelHil-dual}} and the formula \eqref{eq:Reidemeister-coincidence-product-eigenvalues}
  in {\rm Theorem \ref{thm:FelsKlop-3.4}}
  \begin{align*}
    \# S(\widehat{\varphi}^n, \widehat{\psi}^n)
    &= \# \Ker(\widehat{\varphi}^n - \widehat{\psi}^n)
    = \# \widehat{\Coker}(\varphi^n - \psi^n) \\
    &= \# \Coker(\varphi^n - \psi^n) = R(\varphi^n, \psi^n) \\
    &= |6^n - 3^n|_\infty \cdot |6^n - 3^n|_3 = |2^n - 1|_\infty \cdot |2^n - 1|_3
  \end{align*}
  so that the zeta
  \begin{align*}
    S_{\widehat{\varphi}, \widehat{\psi}}(z) = \exp \left( \sum_{n=1}^\infty \frac{|2^n-1|_\infty \cdot |2^n - 1|_3}{n} z^n \right).
  \end{align*}
  Now we follow the method and the calculations of \cite[Lemma~4.1]{EvStanWard}.
  Let us focus on the power series
  \begin{align*}
    \xi(z) := \sum_{n=1}^\infty \frac{z^n}{n} |2^n-1|_\infty \cdot |2^n-1|_3
  \end{align*}
  Observe that
  \begin{align*}
    |2^n - 1|_3 &= |(3-1)^n - 1|_3 \\
    &= |3^n - n3^{n-1} + \dots + (-1)^{n-1}3n + (-1)^n - 1|_3 \\
    &= |3\left(3^{n-1} - n3^{n-2} + \dots + (-1)^{n-1}n \right) + (-1)^n - 1|_3 \\
    &= \begin{cases}
      \frac{1}{3}|n|_3, & \text{$n$ {\rm even}} \\
      1, & \text{$n$ {\rm odd}}
    \end{cases},
  \end{align*}
  in particular $|4^n-1|_3 = |2^{2n}-1|_3 = \frac{1}{3}|2n|_3 = \frac{1}{3}|n|_3$.
  It makes sense now to write
  \begin{align} \label{eq:example-synchronization-zeta-xi}
  \begin{split}
    \xi(z) \ &= \ \sum_{n=0}^\infty \frac{z^{2n+1}}{2n+1}(2^{2n+1}-1) \
      + \ \sum_{n=1}^\infty \frac{z^{2n}}{2n} (2^{2n}-1)|2^{2n}-1|_3 \\
    &= \ \log \left( \frac{1-z}{1-2z} \right) - \frac{1}{2} \log \left( \frac{1-z^2}{1-4z^2} \right) \
      + \ \sum_{n=1}^\infty \frac{z^{2n}}{2n} (2^{2n}-1)|2^{2n}-1|_3.
  \end{split}
  \end{align}

  Denote the last term by $\frac{1}{6}\xi_1(z)$. We have
  \begin{align*}
    \xi_1(z) \ = \ 3\sum_{n=1}^\infty \frac{z^{2n}}{n}(4^n-1)|4^n-1|_3 = \sum_{n=1}^\infty \frac{z^{2n}}{n}(4^n-1)|n|_3.
  \end{align*}
  We will show that $\xi_1(z)$ has infinitely many logarithmic singularities on the circle $|z| = 1/2$.

  We are going to use the notation $3^a \Divides n$ to denote $3^a \divides n$ and $3^{a+1} \not\divides n$.
  It is clear that $3^a \Divides n$ is equivalent to $|n|_3 = 3^{-a}$.
  Denote $\eta_j^{(a)}(z) = \sum_{3^j \Divides n} \frac{z^{2n}}{n}(a^n-1)$.
  We can split up $\xi_1$ in the following way
  \begin{align*}
    \xi_1(z) &= \sum_{j=0}^\infty \frac{1}{3^j} \sum_{3^j \Divides n} \frac{z^{2n}}{n}(4^n-1) \\
    &= \sum_{j=0}^\infty \frac{1}{3^j} \eta_j^{(4)}(z)
  \end{align*}
  Observe that
  \begin{align*}
    \eta_0^{(a)}(z) &= \sum_{3^j \Divides n} \frac{z^{2n}}{n}(a^n-1) \\
    &= \sum_{n=1}^\infty \frac{z^{2n}}{n}(a^n-1) - \sum_{n=1}^\infty \frac{z^{6n}}{3n}(a^{3n} - 1) \\
    &= \log \left( \frac{1-z^2}{1-az^2} \right) - \frac{1}{3} \log \left( \frac{1-z^6}{1-a^3z^6} \right).
  \end{align*}
  Moreover
  \begin{align*}
    \eta_1^{(4)}(z) = \sum_{3^1 \Divides n} \frac{z^{2n}}{n} (4^n-1) = \sum_{3^0 \Divides n} \frac{z^{6n}}{3n}(4^{3n}-1)
    = \frac{1}{3}\eta_0^{(4^3)}(z^3)
  \end{align*}
  and similarly
  \begin{align*}
    \eta_2^{(4)}(z) = \frac{1}{9} \eta_0^{(4^9)}(z^9)
  \end{align*}
  and so on.
  Therefore the power series
  \begin{align*}
    \xi_1(z) = \log \left( \frac{1-z^2}{1-(2z)^2} \right)
      + 2 \sum_{j=1}^\infty \frac{1}{9^j} \log \left( \frac{1-(2z)^{2 \cdot 3^j}}{1-z^{2 \cdot 3^j}} \right).
  \end{align*}
  Returning via \eqref{eq:example-synchronization-zeta-xi} to the synchronization zeta function
  we have its module
  \begin{align*}
    \left|S_{\widehat{\varphi},\widehat{\psi}}(z)\right|_\infty \ = \ \left| \frac{1-z}{1-2z} \right|_\infty
      \cdot \left| \frac{1-(2z)^2}{1-z^2} \right|_\infty^{1/2}
      \cdot \left| \frac{1-z^2}{1-(2z)^2} \right|_\infty^{1/6}
      \cdot \prod_{j=1}^\infty \left| \frac{1-(2z)^{2\cdot3^j}}{1-z^{2\cdot 3^j}} \right|_\infty^{1/(3\cdot 9^j)}.
  \end{align*}
  Note that if $2z$ is $(2\cdot3^j)$-th root of $1$, $j=1,2,\dots$, then $S_{\widehat{\varphi},\widehat{\psi}}(z) = 0$,
  therefore $|z| = 1/2$ is a natural boundary.
\end{ex}

\subsection{Rationality of synchronization zeta function on the dual space of virtually polycyclic groups}
We cite the definition of a virtually polycyclic group from \cite{Dere}.
A group $G$ is \emph{polycyclic} if it has a subnormal series
\begin{align*}
  \{ e \} = G_0 \vartriangleleft G_1 \vartriangleleft \dots \vartriangleleft G_l = G
\end{align*}
with $G_i$ normal in $G_{i+1}$ and the quotient $G_{i+1}/G_i$ cyclic.
A group $G$ is called \emph{virtually polycyclic} if it has a subgroup of finite index which is polycyclic.
Every finitely generated nilpotent group is automatically polycyclic.

\begin{thm}[\protect{\cite[Theorem 7.8]{FelTro}}, Twisted Burnside-Frobenius Theorem] \label{thm:FelTro-7.8}
  Let $G$ be a virtually polycyclic group and $\varphi: G \to G$ its automorphism.
  Then the Reidemeister number $R(\varphi)$ is equal to the number of fixed points
  of the map $\widehat{\varphi}$ induced on the finite-dimensional part $\widehat{G}_{fin}$
  of the unitary dual of $G$.
\end{thm}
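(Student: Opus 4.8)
The plan is to reduce the twisted Burnside--Frobenius equality for the virtually polycyclic group $G$ to the corresponding statement for a suitable \emph{finite} quotient, where it is elementary character theory. The \emph{abelian case} is essentially Lemma~\ref{thm:FelHil-dual}: for $G$ finitely generated abelian, $a$ and $b$ are $(\varphi,\Id)$-conjugate iff $b-a\in\Im(\varphi-\Id)$, so $R(\varphi)=\#\Coker(\varphi-\Id)$, whereas $\Fix(\widehat\varphi)=\Ker\bigl(\widehat{\varphi-\Id}\bigr)\cong\widehat{\Coker(\varphi-\Id)}$ has cardinality $\#\Coker(\varphi-\Id)$ when this is finite and is infinite otherwise ($\widehat G_{fin}=\widehat G$ here, as all irreducibles are one-dimensional). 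I would use the same computation on the sections of a characteristic series to dispose of the case $R(\varphi)=\infty$: one reduces along such a series to a $\Z^k$-section on which $\varphi$ induces a map having $1$ as an eigenvalue, and Lemma~\ref{thm:FelHil-dual} produces a positive-dimensional subtorus of $\widehat\varphi$-fixed characters, hence infinitely many fixed points in $\widehat G_{fin}$. So assume henceforth $R(\varphi)=r<\infty$.

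Two structural facts about virtually polycyclic groups now enter. First, such groups are \emph{twisted conjugacy separable}: finitely many distinct $(\varphi,\Id)$-twisted classes of $G$ are carried to distinct twisted classes in some finite quotient of $G$ to which $\varphi$ descends; since a quotient map always surjects on twisted classes, this gives a $\varphi$-invariant finite-index normal subgroup $N_0$ with $R(\bar\varphi)=r$ on $G/N_0$. Second, when $R(\varphi)<\infty$ a $\varphi$-fixed finite-dimensional irreducible unitary representation of $G$ has finite image (the abelian prototype: a $\varphi$-fixed character lies in $\Ker(\widehat\varphi-\Id)\cong\widehat{\Coker(\varphi-\Id)}$, a finite group, so is torsion); combined with the bound $R(\bar\varphi)\le r$ valid for every finite quotient, it follows that $\Fix(\widehat\varphi|_{\widehat G_{fin}})$ is finite and consists of representations factoring through a single finite quotient $G/N_1$. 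With $N:=N_0\cap N_1$ and $Q:=G/N$, pullback along $G\twoheadrightarrow Q$ restricts to a bijection between $\Fix(\widehat{\bar\varphi})\subseteq\widehat Q$ and $\Fix(\widehat\varphi|_{\widehat G_{fin}})$, while $R(\bar\varphi)=r$ on $Q$; the Burnside--Frobenius theorem for the finite group $Q$ then gives $\#\Fix(\widehat{\bar\varphi})=R(\bar\varphi)=r=R(\varphi)$, as desired.

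The main obstacle is establishing the two structural inputs for virtually polycyclic groups: (i) separating finitely many $(\varphi,\Id)$-twisted conjugacy classes in one finite quotient on which $\varphi$ still acts, and (ii) finiteness of the image of a $\varphi$-fixed finite-dimensional irreducible representation when $R(\varphi)<\infty$. I would prove both by induction on the Hirsch length, using that a polycyclic group has a characteristic poly-$\Z$ subgroup of finite index and is residually finite through a filtration by $\varphi$-invariant finite-index characteristic subgroups; the inductive step for an extension $1\to A\to G\to G/A\to1$ with $A$ finitely generated abelian also uses Lemma~\ref{thm:FelHil-dual} and the standard summation of Reidemeister numbers over the $\bar\varphi$-fixed twisted classes of $G/A$. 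Once (i) and (ii) are available, the remainder is bookkeeping with finite quotients together with the finite-group case.
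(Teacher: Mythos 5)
This theorem is not proved in the paper at all: it is imported verbatim from Fel'shtyn--Troitsky \cite{FelTro}, so there is no internal argument to measure you against, and your proposal has to stand on its own. Its bookkeeping endgame is sound: granting your facts (i) and (ii), the deduction that $\Fix(\widehat\varphi|_{\widehat G_{fin}})$ has at most $r$ elements (any $r+1$ fixed representations would factor through the finite quotient by the intersection of their kernels, contradicting the finite-group twisted Burnside bound $\le r$ there), the choice $N=N_0\cap N_1$ with $\varphi(N)=N$, the squeeze $r=R(\bar\varphi \text{ on } G/N_0)\le R(\bar\varphi \text{ on } G/N)\le R(\varphi)=r$, and the pullback bijection of fixed sets are all correct. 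The genuine gap is that (i) (separation of finitely many $(\varphi,\Id)$-classes in a finite $\varphi$-invariant quotient) and (ii) (finiteness of the image of a $\widehat\varphi$-fixed finite-dimensional irreducible when $R(\varphi)<\infty$) are not side lemmas: they are essentially the whole content of the cited theorem, and you only promise an induction on Hirsch length without carrying out the inductive step, which is exactly where the difficulty sits (for an extension $1\to A\to G\to G/A\to 1$ the twisted classes of $G$ do not simply sum over the fixed classes of $G/A$ unless one controls the relevant twisted centralizers, and a fixed irreducible of $G$ is not governed by Lemma~\ref{thm:FelHil-dual} applied to $A$ alone). So as written this is a reduction of the theorem to two unproven theorems about virtually polycyclic groups rather than a proof. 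For (i) there is in fact a cleaner route you do not mention: $\varphi$-twisted classes of $G$ are ordinary conjugacy classes inside the coset $Gt$ of the mapping torus $G\rtimes_\varphi\Z$, which is again virtually polycyclic and hence conjugacy separable (Remeslennikov, Formanek); fact (ii) is a substantive result of the Fel'shtyn--Troitsky theory and needs a real argument, since the analogous statement fails for general finitely generated residually finite groups.

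A secondary but concrete flaw is your disposal of the case $R(\varphi)=\infty$. A positive-dimensional torus of $\widehat\varphi$-fixed characters on a $\Z^k$-\emph{section} of a characteristic series does not yield fixed points of $\widehat G_{fin}$: characters of a subquotient that is not a quotient of $G$ cannot be pulled back to $G$, and inducing them up neither preserves irreducibility nor $\widehat\varphi$-fixedness in any automatic way. Moreover the implication ``$R(\varphi)=\infty$ forces eigenvalue $1$ on some abelian section'' is itself a nontrivial assertion for virtually polycyclic (as opposed to finitely generated torsion-free nilpotent) groups and is stated without proof. This part matters only if one insists on the statement with no finiteness hypothesis; the original theorem of \cite{FelTro} carries the assumption $R(\varphi)<\infty$, and in this paper the theorem is only ever invoked in situations where finiteness is guaranteed, so the real deficit remains the one described above.
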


\begin{thm}[\protect{\cite[Theorem 3.12]{Dere}}] \label{thm:Dere-3.12}
  Let $\varphi: \Gamma \to \Gamma$ be a monomorphism of a virtually polycyclic group.
  If $R(\varphi^n) < \infty$ for all integers $n>0$, then the group $\Gamma$ must be virtually nilpotent.
  In particular, the Reidemeister zeta function $R_\varphi(z)$ of $\varphi$ is rational
  if the group $\Gamma$ is additionally torsion-free.
\end{thm}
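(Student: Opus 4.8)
The plan is to prove the two assertions separately: the first — that finiteness of all the $R(\varphi^{n})$ forces $\Gamma$ to be virtually nilpotent — by contraposition using the structure theory of virtually polycyclic groups, and the second by reduction to the known rationality of Reidemeister zeta functions on infra-nilmanifolds. For the preliminary reductions: since $\varphi$ is injective and the Hirsch length is an isomorphism invariant, $\varphi(\Gamma)$ has the same Hirsch length as $\Gamma$ and is therefore of finite index, so $\varphi$ is ``virtually an automorphism'' and extends uniquely to an isogeny $\widetilde\varphi$ of the $\Q$-algebraic hull $\mathbf H$ of $\Gamma$ (Mostow). Write $\mathbf H = \mathbf U \rtimes \mathbf T$ with $\mathbf U$ the connected unipotent radical and $\mathbf T$ a maximal reductive subgroup; for a virtually polycyclic group $\mathbf T^{0}$ is a $\Q$-torus acting faithfully on the Lie algebra $\mathfrak u$ of $\mathbf U$, and the relevant dichotomy is that $\Gamma$ is virtually nilpotent if and only if $\dim\mathbf T = 0$ (equivalently, $\Gamma$ embeds into a unipotent $\Q$-group).

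For the first assertion I would argue by contraposition: suppose $\Gamma$ is \emph{not} virtually nilpotent, so $r := \dim\mathbf T \ge 1$. The isogeny $\widetilde\varphi$ preserves the characteristic subgroup $\mathbf U$ and hence induces an isogeny $\overline\varphi$ on $\mathbf T \cong \mathbf H/\mathbf U$; after composing $\widetilde\varphi$ with conjugation by a suitable element of $\mathbf U$ — which leaves $\overline\varphi$ unchanged, since $\mathbf U$ acts trivially by conjugation on $\mathbf H/\mathbf U$, and which by Mostow's conjugacy theorem for maximal reductive subgroups can be arranged so that $\widetilde\varphi(\mathbf T)=\mathbf T$ — the map $\widetilde\varphi$ carries the weight spaces of $\mathbf T$ acting on $\mathfrak u$ to weight spaces. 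Thus the dual $\overline\varphi^{*}$ on the character lattice $X^{*}(\mathbf T)$ permutes the finite set of weights, which spans $X^{*}(\mathbf T)\otimes\Q$ by faithfulness; so $\overline\varphi^{*}$ has finite order $m$ in $\mathrm{GL}(X^{*}(\mathbf T))$ and, enlarging $m$ if necessary, $\varphi^{m}$ acts trivially on the image $\Lambda$ of $\Gamma$ in $\mathbf T(\Q)$, a finitely generated group of rank $\ge r \ge 1$. Composing with a projection $\Lambda\twoheadrightarrow\Z$ yields a $\varphi^{m}$-equivariant epimorphism $\Gamma\twoheadrightarrow(\Z,\Id_{\Z})$; since epimorphisms carry twisted conjugacy classes onto twisted conjugacy classes, $R(\varphi^{m}) \ge R(\Id_{\Z}) = \infty$, contradicting the hypothesis. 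Hence $\Gamma$ is virtually nilpotent. I expect the genuine obstacle to be exactly this structural input — the functoriality of the algebraic hull for injective endomorphisms of finite-index image and the precise ``virtually nilpotent $\iff \dim\mathbf T = 0$'' dichotomy — after which the deduction is formal; alternatively one can run the same argument with the Fitting subgroup $N=\mathrm{Fitt}(\Gamma)$, using $C_{\Gamma}(N)\le N$ so that $\Gamma/N$ embeds into $\mathrm{GL}(N^{\mathrm{ab}}\otimes\Q)$ and $\varphi$ permutes the weight data of this holonomy representation.

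For the ``in particular'' clause, $\Gamma$ is now torsion-free, virtually polycyclic and (by the first part) virtually nilpotent, hence an almost-Bieberbach group, i.e.\ $\Gamma=\pi_{1}(M)$ for an infra-nilmanifold $M$; since $\varphi$ is a monomorphism with finite-index image it is realised by a self-covering $f\colon M\to M$ with $R(f^{n})=R(\varphi^{n})<\infty$ for all $n$. (One cannot simply pass to the automorphism induced on the eventual image $\bigcap_{n}\varphi^{n}(\Gamma)$: for $\Gamma=\Z$, $\varphi\colon g\mapsto 2g$ this intersection is trivial although $R(\varphi^{n})=2^{n}-1$.) By the Anosov/Lefschetz product formula for self-maps of infra-nilmanifolds, $R(f^{n})$ equals a holonomy-averaged expression $\tfrac{1}{|F|}\sum_{A\in F}\bigl|\det\bigl(I-A_{*}D^{n}\bigr)\bigr|$ with $D$ the linearisation of $f$; opening up the absolute values by an appropriate symmetric polynomial, exactly as in the proof of Theorem~\ref{thm:synchronization-dichotomy} and Example~\ref{ex:synchronization-on-T1}, exhibits $R(\varphi^{n})=\sum_{j\in J}c_{j}\rho_{j}^{n}$ with $J$ finite, $c_{j}\in\Z$ and the $\rho_{j}$ fixed algebraic numbers. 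Such a sequence is the coefficient sequence of a rational power series, so $R_{\varphi}(z)=\exp\bigl(\sum_{n}R(\varphi^{n})z^{n}/n\bigr)=\prod_{j\in J}(1-\rho_{j}z)^{-c_{j}}$ is rational; the one delicate point — supplied by the detailed infra-nilmanifold analysis in the cited work — is that the holonomy averaging produces \emph{integer} exponents $c_{j}$.
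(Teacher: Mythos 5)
You should first note that the paper does not prove this statement at all --- it is quoted verbatim as \cite[Theorem 3.12]{Dere} --- so the only comparison available is with Der\'e's own argument. Your overall route (extend the monomorphism to the $\Q$-algebraic hull, observe that the induced isogeny of the reductive quotient $\mathbf{T}\cong\mathbf{H}/\mathbf{U}$ permutes the finitely many weights of the faithful $\mathbf{T}^{0}$-action on $\mathfrak{u}$ and hence has finite order, derive infinite Reidemeister numbers from this, and then quote the infra-nilmanifold rationality theorem for the torsion-free virtually nilpotent case) is the right shape and close in spirit to the cited proof; the weight-space bookkeeping does need the injectivity of $d\widetilde\varphi|_{\mathfrak{u}}$ (the image of the $\chi$-weight space lies in weight spaces whose weights are mapped \emph{to} $\chi$ by $\overline\varphi^{*}$, so one gets $W\subseteq\overline\varphi^{*}(W)$ and then a permutation by finiteness), and the hull construction requires the maximal finite normal subgroup to be handled first, but these are minor.

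The genuine gap is the final step of the contrapositive. From $(\overline\varphi^{*})^{m}=\Id$ you only get that $\varphi^{m}$ induces the identity on $\Lambda_{0}:=\Lambda\cap\mathbf{T}^{0}$, not on all of $\Lambda$: elements of $\Lambda$ in other components are fixed only modulo $\mathbf{T}^{0}$, and only after a further power is even the induced permutation of components trivial. More seriously, $\Lambda$ is merely a finitely generated \emph{virtually} abelian group, and it can have finite abelianization --- an infinite dihedral image really occurs, e.g.\ for non-virtually-nilpotent polycyclic groups such as fundamental groups of Sol torus semi-bundles with finite first homology --- so the asserted epimorphism $\Lambda\twoheadrightarrow\Z$ (equivariant or not) need not exist, and the contradiction $R(\varphi^{m})\ge R(\Id_{\Z})=\infty$ cannot be drawn; for such $\Gamma$ there is no surjection onto $\Z$ at all. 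The step can be repaired: replace $m$ by $mk$ so that $\psi:=\overline\varphi^{\,mk}$ restricted to $\Lambda$ is the identity on the abelian normal finite-index subgroup $\Lambda_{0}$ and trivial on the finite quotient $\Lambda/\Lambda_{0}$; then a short computation shows that every $\psi$-twisted conjugacy class meeting $\Lambda_{0}$ is contained in $\Lambda_{0}$ and has at most $[\Lambda:\Lambda_{0}]$ elements, whence $R(\psi)=\infty$, and $R(\varphi^{mk})\ge R(\psi)$ gives the desired contradiction (this, or a finite-index reduction lemma, is what the cited proof supplies). Finally, be explicit that the ``in particular'' clause is not something you rederive: the integrality of the exponents in the averaging formula and the resulting rationality for torsion-free virtually nilpotent groups is exactly the Dekimpe--Dugard\`ein/Fel'shtyn--Lee theorem on infra-nilmanifolds, which you (legitimately) cite rather than prove.
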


\begin{thm} \label{thm:synchronization-on-dual-of-virtually-polycyclic=reidemeister}
  Let $G$ be a torsion-free virtually polycyclic group
  and $\varphi, \psi: G \to G$ a~synchronously tame pair of commuting automorphisms.
  Denote the space of unitary irreducible representations of $G$ by $\widehat{G}$,
  its subspace of finite dimensional representations by $\widehat{G}_{fin}$
  and dual maps induced by $\varphi, \psi$ on the subspace $\widehat{G}_{fin}$ by $\widehat{\varphi}_{fin}, \widehat{\psi}_{fin}$ respectively.

  Then $\# S(\widehat{\varphi}_{fin}^n, \widehat{\psi}_{fin}^n) = R(\varphi^n, \psi^n)$ for all $n \in \N$
  and the synchronization zeta function $S_{\widehat{\varphi}_{fin}, \widehat{\psi}_{fin}}(z) = R_{\varphi, \psi}(z)$
  is rational.
\end{thm}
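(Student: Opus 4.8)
The plan is to reduce everything to the ordinary one-map Reidemeister theory for the automorphism $\tau := \psi^{-1}\varphi = \varphi\psi^{-1}$ (the two expressions agree because $\varphi$ and $\psi$ commute), and then quote the Twisted Burnside--Frobenius Theorem (Theorem~\ref{thm:FelTro-7.8}) and the rationality result of Theorem~\ref{thm:Dere-3.12}.

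First I would establish the coincidence-to-ordinary reduction $R(\varphi^n,\psi^n) = R(\tau^n)$ for every $n\in\N$. Starting from the defining relation $\beta = \psi^n(g)\,\alpha\,\varphi^n(g^{-1})$ for $(\varphi^n,\psi^n)$-conjugacy of $\alpha,\beta\in G$ and applying the bijection $\psi^{-n}$ of $G$ to both sides, using that $\psi^{-1}$ commutes with $\varphi$ so that $\psi^{-n}\varphi^n = (\psi^{-1}\varphi)^n = \tau^n$, one obtains $\psi^{-n}(\beta) = g\,\psi^{-n}(\alpha)\,\tau^n(g^{-1})$. Hence $\psi^{-n}$ descends to a bijection from the set of $(\varphi^n,\psi^n)$-conjugacy classes onto the set of $\tau^n$-conjugacy classes, giving $R(\varphi^n,\psi^n)=R(\tau^n)$; in particular the hypothesised synchronous tameness of the pair is equivalent to $\tau$ being a tame automorphism.

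Next I would run the parallel reduction on the dual side. By contravariance of $\rho\mapsto\widehat\varphi(\rho)=\rho\circ\varphi$ one has $\widehat\tau = \widehat\varphi\circ(\widehat\psi)^{-1}$, and since $\widehat\varphi$ and $\widehat\psi$ commute this yields $(\widehat\tau)^n = \widehat\varphi^{\,n}\circ\widehat\psi^{\,-n}$ on $\widehat G_{fin}$ (note $\widehat\psi$ preserves $\widehat G_{fin}$ because $\psi$ is an automorphism). The substitution $\rho\mapsto\sigma:=\widehat\psi^{\,-n}(\rho)$ then identifies $\Fix\big((\widehat\tau)^n|_{\widehat G_{fin}}\big)$ bijectively with $S(\widehat\varphi_{fin}^n,\widehat\psi_{fin}^n)$, since $(\widehat\tau)^n(\rho)=\rho$ is equivalent to $\widehat\varphi^{\,n}(\sigma)=\widehat\psi^{\,n}(\sigma)$. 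Combining this with Theorem~\ref{thm:FelTro-7.8} applied to the automorphism $\tau^n$ gives
\[
  \# S(\widehat\varphi_{fin}^n,\widehat\psi_{fin}^n) \;=\; \#\Fix\big((\widehat\tau)^n|_{\widehat G_{fin}}\big) \;=\; R(\tau^n) \;=\; R(\varphi^n,\psi^n),
\]
which is the first assertion.

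Finally, the equality of zeta functions is then immediate from the definitions: substituting $\#S(\widehat\varphi_{fin}^k,\widehat\psi_{fin}^k)=R(\varphi^k,\psi^k)$ into the defining exponential series shows $S_{\widehat\varphi_{fin},\widehat\psi_{fin}}(z)=R_{\varphi,\psi}(z)$, and this in turn equals $\exp\big(\sum_k R(\tau^k)z^k/k\big)=R_\tau(z)$. Since $G$ is torsion-free virtually polycyclic and $\tau$ is an automorphism (hence a monomorphism) with $R(\tau^n)<\infty$ for all $n$, Theorem~\ref{thm:Dere-3.12} gives that $R_\tau(z)$ is rational, hence so is $S_{\widehat\varphi_{fin},\widehat\psi_{fin}}(z)$. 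I expect the only delicate point, rather than a genuine obstacle, to be the bookkeeping: keeping track of contravariance of the dual and the repeated use of commutativity to pass between $(\psi^{-1}\varphi)^n$ and $\psi^{-n}\varphi^n$ on both the group and its dual; once these substitutions are set up correctly, the statement is a direct translation through the two cited theorems.
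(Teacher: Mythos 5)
Your proposal is correct and follows essentially the same route as the paper: identify the synchronization points of $\widehat{\varphi}_{fin}^n,\widehat{\psi}_{fin}^n$ with the fixed points of the dual of $\psi^{-n}\varphi^{n}=(\psi^{-1}\varphi)^n$, apply the twisted Burnside--Frobenius theorem (Theorem~\ref{thm:FelTro-7.8}) to get $R(\varphi^n,\psi^n)$, and then invoke Theorem~\ref{thm:Dere-3.12} for rationality of $R_{\psi^{-1}\varphi}(z)$. The only difference is cosmetic: you prove the reduction $R(\varphi^n,\psi^n)=R\big((\psi^{-1}\varphi)^n\big)$ inline via the bijection induced by $\psi^{-n}$, whereas the paper asserts it here and records it separately later (Lemma~\ref{red}).
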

\begin{proof}
  For bijections $\widehat{\varphi}_{fin}, \widehat{\psi}_{fin}$
  \begin{align*}
    \rho \in S(\widehat{\varphi}_{fin}^n, \widehat{\psi}_{fin}^n) \ &\Leftrightarrow \ \widehat{\varphi}_{fin}^n(\rho) = \widehat{\psi}_{fin}^n(\rho)
      \ \Leftrightarrow \ \left( \widehat{\psi}_{fin}^{-n} \circ \widehat{\varphi}_{fin}^n \right)(\rho) = \rho  \\
    &\Leftrightarrow \ \left( \widehat{(\varphi^n \circ \psi^{-n})}_{fin} \right)(\rho) = \rho
      \ \Leftrightarrow \ \rho \in \Fix \left(\widehat{(\varphi^n \circ \psi^{-n})}_{fin} \right)
  \end{align*}
  therefore, by commutativity of $\widehat{\varphi}_{fin}, \widehat{\psi}_{fin}$
  and by Theorem~\ref{thm:FelTro-7.8}
  \begin{align*}
    \# S(\widehat{\varphi}_{fin}^n, \widehat{\psi}_{fin}^n)
      &= \# \Fix \left( \widehat{(\varphi^n \circ \psi^{-n})}_{fin} \right)
      = R(\psi^{-n} \circ \varphi^n) = R(\varphi^n, \psi^n).
  \end{align*}

  Observe that $\psi^{-n} \circ \varphi^n$ is an automorphism, so in particular it is a monomoprhism,
  therefore by Theorem~\ref{thm:Dere-3.12} the~group $G$ must be virtually nilpotent
  and because $G$ is additionally by hypothesis torsion-free, the zeta function
  \begin{align*}
    S_{\widehat{\varphi}_{fin}, \widehat{\psi}_{fin}}(z) = R_{\varphi, \psi}(z) = R_{\psi^{-1}\varphi}(z)
  \end{align*}
  is rational.
\end{proof}

\subsection{Rationality of synchronization zeta function for Axiom A diffeomoprhisms} \label{sec:axiom-A}

We recall definitions related to the Smale's Axiom A as in \cite{Sma67}.
Let $M$ be a compact manifold and $f \in \Diff(M)$ be a diffeomorphism
such that the set $\Fix(f)$ of its fixed points is finite.
A point $x \in M$ is \emph{wandering} if there is an open neighborhood $U$ of $x$
such that $\bigcup_{|m|>0} f^m(U) \cap U = \varnothing$.
A point is \emph{nonwandering} if it is not wandering.
Nonwandering points form a closed invariant set which is denoted by $\Omega(f)$.

A linear automorphism $u: V \to V$ of a finite dimensional vector space is \emph{hyperbolic}
if all its eigenvalues $\lambda_i$ satisfy $|\lambda_i| \neq 1$,
\emph{contracting} if all its eigenvalues satisfy $|\lambda_i| < 1$,
and \emph{expanding} if all its eigenvalues satisfy $|\lambda_i| > 1$.
A fixed point $p \in \Fix(f)$ of a diffeomorphism is \emph{hyperbolic}
if the derivative of $f$ at $p$, $Df(p): T_p(M) \to T_p(M)$ is hyperbolic
as a linear automoprhism.

Let $\Lambda \subset M$ be closed subset invariant under $f$.
The set $\Lambda$ is \emph{hyperbolic} (or has a \emph{hyperbolic structure})
if the tangent bundle of $M$ restricted to $\Lambda$, $T_\Lambda(M)$ has an invariant continuos
splitting $T_\Lambda(M) = E^u + E^s$ such that $Df: E^s \to E^s$ is contracting
and $Df: E^u \to E^u$ expanding.
The sets $\Omega(f^k)$ for $k \in \N$ are examples of hyperbolic sets.

We say that a diffeomorphism $f: M \to M$ of a compact manifold \emph{satisfies Axiom A}
if: (a) the nonwandering set $\Omega(f)$ is hyperbolic and
(b) the periodic points of $f$ are dense in $\Omega(f)$.
If, additionally, $M = \Omega(f)$, then $f$ is an \emph{Anosov diffeomoprhism}.

\begin{thm}[{\protect \cite{Man71}}] \label{thm:Manning}
  Let $f: M \to M$ be a $C^1$ diffeomorphism of a $C^\infty$ compact manifold $M$ without boundary.
  If $f$ satisfies Axiom A then the Artin-Mazur zeta function
  is rational.
\end{thm}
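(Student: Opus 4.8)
The plan is to reduce, via Smale's spectral decomposition and Bowen's symbolic dynamics, the rationality of $\zeta_f(z)$ to a purely combinatorial statement about subshifts of finite type, to which Theorem~\ref{thm:trace-formula-for-subshifts} applies. First, since $f$ satisfies Axiom A, Smale's spectral decomposition theorem (see \cite{Sma67}) splits the nonwandering set into finitely many pairwise disjoint closed invariant \emph{basic sets}, $\Omega(f) = \Omega_1 \sqcup \cdots \sqcup \Omega_s$, on each of which $f$ is topologically transitive and has a hyperbolic structure, and each $\Omega_i$ is moreover a finite disjoint union of closed sets cyclically permuted by $f$ with a power of $f$ topologically mixing on each piece. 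Every periodic point of $f$ is nonwandering, so $\Fix(f^n) = \bigsqcup_{i=1}^{s}\big(\Fix(f^n) \cap \Omega_i\big)$ for all $n$, whence $\#\Fix(f^n) = \sum_{i=1}^{s}\#\Fix\big((f|_{\Omega_i})^n\big)$ and therefore $\zeta_f(z) = \prod_{i=1}^{s}\zeta_{f|_{\Omega_i}}(z)$. It thus suffices to prove that each factor $\zeta_{f|_{\Omega_i}}(z)$ is rational, so we may assume that $\Omega(f)$ is a single basic set on which $f$ is topologically transitive.

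The second step introduces symbolic dynamics. Since $f$ is expansive on a hyperbolic basic set, Bowen's theorem on the existence of Markov partitions furnishes, for a sufficiently fine partition $\{R_1,\dots,R_m\}$ into rectangles, a subshift of finite type $(\Sigma_A,\sigma_A)$ with $0$--$1$ transition matrix $A_{jk} = 1 \iff \Int R_j \cap f^{-1}(\Int R_k) \neq \varnothing$, together with a continuous surjection $\pi\colon \Sigma_A \to \Omega_i$ satisfying $\pi\circ\sigma_A = f\circ\pi$, which is boundedly finite-to-one and bijective over the residual set of points avoiding all forward and backward images of the rectangle boundaries. In particular $\pi$ restricts to a finite-to-one surjection on periodic points, and by Theorem~\ref{thm:trace-formula-for-subshifts} the number $\#\Fix(\sigma_A^n) = \tr A^n$ counts the periodic points of $f|_{\Omega_i}$ \emph{with multiplicity equal to the number of symbolic preimages}. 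The remaining task is to remove this overcounting.

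The heart of the argument — and the step I expect to be the main obstacle — is Manning's combinatorial device for correcting the multiplicity \cite{Man71}. For $x\in\Omega_i$ the preimage $\pi^{-1}(x)$ is finite of cardinality at most some $N$ independent of $x$, and $x$ is overcounted precisely when it lies on the union of (images of) the stable and unstable boundaries of the rectangles; the combinatorics of which rectangles and which boundary pieces can simultaneously meet a given point is itself \emph{of finite type}. Concretely, for each $l=1,\dots,N$ one builds an auxiliary subshift of finite type $(\Sigma_{B_l},\sigma_{B_l})$ on an alphabet of $l$-element compatible configurations of symbols, parametrizing (ordered, with respect to a fixed lexicographic order) $l$-tuples of distinct points of $\Sigma_A$ with a common $\pi$-image; the Markov property of the partition is exactly what makes the coincidence relation $\pi(\omega^{1}) = \cdots = \pi(\omega^{l})$ a local, hence finite-type, condition after recoding, with $B_1 = A$. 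An inclusion--exclusion over the number of symbolic preimages (each $x$ with exactly $k$ preimages contributes $\binom{k}{l}$ to $\#\Fix(\sigma_{B_l}^n)$, and $\sum_{l\ge 1}(-1)^{l+1}\binom{k}{l}=1$) then yields $\#\Fix\big((f|_{\Omega_i})^n\big) = \sum_{l=1}^{N}(-1)^{l+1}\#\Fix(\sigma_{B_l}^n) = \sum_{l=1}^{N}(-1)^{l+1}\tr B_l^n$, using Theorem~\ref{thm:trace-formula-for-subshifts} again. Substituting into the definition of the zeta function and using $\exp\big(\sum_{n\ge 1}\tfrac{1}{n}\tr(B^n)z^n\big) = \det(I - zB)^{-1}$ gives $\zeta_{f|_{\Omega_i}}(z) = \prod_{l=1}^{N}\det(I - zB_l)^{(-1)^{l}}$, a rational function, and multiplying over the finitely many basic sets completes the proof. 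The delicate points requiring care are the existence of Bowen's Markov partition with the stated boundedly-finite-to-one property, the verification that the coincidence sets $\Sigma_{B_l}$ are genuinely subshifts of finite type (this is where the stable/unstable boundary bookkeeping enters), and the matching of periods between periodic points of $f|_{\Omega_i}$ and their symbolic preimages.
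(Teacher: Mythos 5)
The paper offers no proof of this theorem---it is quoted directly from \cite{Man71}---so the comparison is with Manning's original argument, which is moreover exactly the route the paper itself invokes later for pseudo-Anosov homeomorphisms (the signed trace formula \eqref{eq:pseudo-Anosov-trace-formula} and Theorem~\ref{thm:pseudo-Anosov-AM-rational}). Your overall architecture---spectral decomposition into basic sets, Bowen's Markov partition and the coding map, auxiliary subshifts of finite type with alternating signs, and finally $\exp\bigl(\sum_{n\ge 1}\tfrac{1}{n}\tr(B^n)z^n\bigr)=\det(I-zB)^{-1}$ as in Theorem~\ref{thm:pseudo-Anosov-AM-rational}---is the intended one.

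However, the central counting step is not correct as you state it. You claim that a point $x\in\Fix(f^n)$ with exactly $k$ symbolic preimages contributes $\binom{k}{l}$ to $\#\Fix(\sigma_{B_l}^n)$ and conclude via $\sum_{l\ge 1}(-1)^{l+1}\binom{k}{l}=1$. But the elements of $\pi^{-1}(x)$ need not themselves be fixed by $\sigma_A^n$: the shift merely permutes this finite set, possibly with cycles of length greater than one, since a symbolic preimage of an $f^n$-fixed point can have strictly larger period. A fixed point of your tuple shift $\sigma_{B_l}^n$ lying over $x$ corresponds to a $\sigma_A^n$-invariant $l$-element subset of $\pi^{-1}(x)$, not to an arbitrary one, so the contribution is the number of invariant $l$-subsets, and the alternating sum of these equals $1-\prod_j\bigl(1+(-1)^{c_j}\bigr)$, where the $c_j$ are the cycle lengths of the induced permutation; this is $1$ only when some cycle is odd. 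So the inclusion--exclusion requires, at a minimum, Manning's lemma that every $x\in\Fix(f^n)$ admits a symbolic preimage fixed by $\sigma_A^n$ (proved from the Markov property by choosing the coding rectangles periodically along the orbit), after which the induced permutation has a fixed point, the product above vanishes, and the signed sum is indeed $1$; alternatively one follows Manning's actual construction, whose auxiliary shifts are built from tuples of distinct \emph{rectangles} rather than tuples of points of the shift space, which is also where one verifies that the coincidence relation is genuinely of finite type. You do flag the ``matching of periods'' as a delicate point, but as written the identity your argument rests on is false without this extra lemma, and that lemma---together with the finite-type property of the fiber relation---is the real content of \cite{Man71}.
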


Observe that when $f, g$ commute then
\begin{equation*}
  x \in \Fix \left( (g^{-1}f)^n \right) \ \Leftrightarrow \ x \in S(f^n, g^n) \ \Leftrightarrow \
    x \in \Fix \left( (f^{-1}g)^n \right)
\end{equation*}
for all $n \in \N$
and the corresponding Artin-Mazur and synchronization zeta functions are equal:
\begin{equation*}
  \zeta_{g^{-1}f}(z) \ = \ S_{f,g}(z) \ = \ \zeta_{f^{-1}g}(z).
\end{equation*}
Then Theorem \ref{thm:Manning} immediately implies the following
\begin{thm}
  Let $f,g: M \to M$ be $C^1$
  commuting diffeomorphisms of a $C^\infty$ compact manifold without boundary.
  If $(g^{-1}f)$ or $(f^{-1}g)$ satisfies Axiom A, then the synchronization zeta function
  $S_{f,g}(z)$ is rational.
\end{thm}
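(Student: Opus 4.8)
The statement is essentially a corollary of the boxed observation immediately preceding it together with Manning's Theorem~\ref{thm:Manning}, so the plan is short. First I would record the reduction of synchronization points to fixed points under the commutativity hypothesis. Since $f$ and $g$ commute, we have $(g^{-1}f)^n = g^{-n}f^n$, whence for every $x \in M$ and $n \in \N$,
\[
  f^n(x) = g^n(x) \ \Longleftrightarrow \ g^{-n}f^n(x) = x \ \Longleftrightarrow \ x \in \Fix\!\left((g^{-1}f)^n\right),
\]
and symmetrically $x \in \Fix\!\left((f^{-1}g)^n\right)$. Therefore $\#S(f^n,g^n) = \#\Fix\!\left((g^{-1}f)^n\right) = \#\Fix\!\left((f^{-1}g)^n\right)$ for all $n$, and the corresponding generating series coincide term by term:
\[
  S_{f,g}(z) \ = \ \zeta_{g^{-1}f}(z) \ = \ \zeta_{f^{-1}g}(z).
\]

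Next I would observe that $h := g^{-1}f$ (respectively $f^{-1}g = h^{-1}$) is a composition of $C^1$ diffeomorphisms of the $C^\infty$ compact boundaryless manifold $M$, hence itself a $C^1$ diffeomorphism of $M$, so it is a legitimate input for Theorem~\ref{thm:Manning}. If $h$ (or $h^{-1}$) satisfies Axiom~A, then $\Fix(h^n)$ is finite for every $n$, which by the identity above shows $(f,g)$ is automatically synchronously tame and $S_{f,g}(z)$ is a well-defined power series. Moreover, Theorem~\ref{thm:Manning} applies and gives rationality of $\zeta_h(z)$ (respectively $\zeta_{h^{-1}}(z)$); combined with the equality $S_{f,g}(z) = \zeta_{g^{-1}f}(z) = \zeta_{f^{-1}g}(z)$ this yields that $S_{f,g}(z)$ is rational, which completes the proof.

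There is no real obstacle here: all the substance lives in the commutativity identity (already displayed in the excerpt) and in Manning's theorem, which we are free to invoke. The only remarks worth making are bookkeeping: that the tameness of $(f,g)$ need not be assumed since it is forced by Axiom~A, and that the ``or'' in the hypothesis is harmless because $g^{-1}f$ and $f^{-1}g$ are mutually inverse and Axiom~A is stable under passing to the inverse (the nonwandering set and the density of periodic points are unchanged, and the hyperbolic splitting simply interchanges $E^u$ and $E^s$), so $\zeta_{g^{-1}f}(z)$ and $\zeta_{f^{-1}g}(z)$ are in fact the same rational function.
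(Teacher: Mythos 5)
Your proposal is correct and follows essentially the same route as the paper: the commutativity identity $\#S(f^n,g^n)=\#\Fix\bigl((g^{-1}f)^n\bigr)=\#\Fix\bigl((f^{-1}g)^n\bigr)$ giving $S_{f,g}(z)=\zeta_{g^{-1}f}(z)=\zeta_{f^{-1}g}(z)$, followed by Manning's Theorem~\ref{thm:Manning}. Your added remarks (tameness being forced by Axiom~A, and the symmetry under passing to the inverse) are correct bookkeeping that the paper leaves implicit.
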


\begin{col}
  If $(g^{-1}f)$ or $(f^{-1}g)$ is Anosov diffeomorphism then the synchronization zeta function
  $S_{f,g}(z)$ is rational.
\end{col}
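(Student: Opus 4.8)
The plan is to read this corollary off as a direct specialization of the theorem just proved. By the definitions recalled at the start of Subsection~\ref{sec:axiom-A}, an \emph{Anosov diffeomorphism} is precisely a diffeomorphism $h$ satisfying Axiom~A for which, in addition, $M = \Omega(h)$; in particular every Anosov diffeomorphism satisfies Axiom~A. So once we know that $S_{f,g}(z)$ is genuinely defined, i.e.\ that $(f,g)$ is a synchronously tame pair, the preceding theorem applies verbatim and gives the conclusion.

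First I would verify synchronous tameness. Write $h := g^{-1}f$ (the case $h := f^{-1}g$ is identical). Since $h$ is Anosov, the whole manifold $M = \Omega(h)$ has a hyperbolic structure: the tangent bundle of $M$ carries a continuous $Dh$-invariant splitting $E^u + E^s$ on which $Dh$ is expanding, resp.\ contracting. Hence for any fixed point $p$ of $h^n$ the derivative $Dh^n(p)$ has no eigenvalue of modulus $1$, so every periodic point of $h$ is hyperbolic, thus isolated; by compactness of $M$ this forces $\#\Fix(h^n) < \infty$ for every $n \in \N$. Invoking the equivalence displayed just before the preceding theorem, namely $x \in \Fix((g^{-1}f)^n) \Leftrightarrow x \in S(f^n,g^n)$, we obtain $\# S(f^n,g^n) = \#\Fix(h^n) < \infty$ for all $n$; therefore $(f,g)$ is synchronously tame and $S_{f,g}(z) = \zeta_{g^{-1}f}(z)$.

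Then I would conclude by applying the preceding theorem — equivalently, Manning's Theorem~\ref{thm:Manning} — to the Axiom~A (indeed Anosov) diffeomorphism $g^{-1}f$, which yields rationality of $\zeta_{g^{-1}f}(z) = S_{f,g}(z)$; the reasoning for $f^{-1}g$ is the same. I do not expect any real obstacle: the corollary merely records that the Anosov case is contained in the Axiom~A case treated above, so the only points worth writing out are the identification of ``Anosov'' with a special case of ``Axiom~A'' and the automatic finiteness of $\#\Fix(h^n)$ that makes the synchronization zeta function well defined.
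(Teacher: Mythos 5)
Your argument is correct and is exactly the route the paper intends: the corollary is a direct specialization of the preceding theorem (equivalently of Manning's Theorem~\ref{thm:Manning}), since by the definitions in Subsection~\ref{sec:axiom-A} an Anosov diffeomorphism is an Axiom~A diffeomorphism with $M=\Omega(f)$. Your additional check that hyperbolicity makes $\#\Fix((g^{-1}f)^n)$ finite, so that the pair is synchronously tame and $S_{f,g}(z)=\zeta_{g^{-1}f}(z)$ is well defined, is a sensible detail but does not change the approach.
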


\subsection{Rationality of synchronization zeta function for pseudo-Anosov homeomorphisms}

A \emph{measured foliation} $(\mathcal{F}, \mu)$ on a closed compact manifold $M$
is a foliation $\mathcal{F}$ together with a transverse measure $\mu$ invariant under holonomy.
Assume additionally that $M$ is a surface, i.e. $\dim M = 2$.
A~homeomorphism $f: M \to M$ is \emph{pseudo-Anosov} if there are two
mutually transverse, invariant under $f$, measured singular foliations $(\mathcal{F}^s, \mu^s)$
and $(\mathcal{F}^u, \mu^u)$, and a $\lambda > 1$, such that
\begin{align*}
  f(\mathcal{F}^s, \mu^s) &= (\mathcal{F}^s, \frac{1}{\lambda}\mu^s) \\
  f(\mathcal{F}^u, \mu^u) &= (\mathcal{F}^u, \lambda \mu^u).
\end{align*}
We call $\mathcal{F}^s, \mathcal{F}^u$ the \emph{stable} and \emph{unstable} foliations, respectively.
Meaning of the equalities above is that $f$ is contracting on the leaves of the stable foliation
and expanding on the leaves of the unstable foliation with the rate $\lambda^{-1}$ and $\lambda$, respectively.

A subset $R \subset M$ is an \emph{$(\mathcal{F}^s, \mathcal{F}^u)$-rectangle}, or \emph{birectangle},
if there exists an immersion $\iota: [0,1] \times [0,1] \to M$ whose image is $R$ and
\begin{enumerate}
\item[(a)] $f|_{(0,1) \times (0,1)}$ is an embedding;
\item[(b)] $\forall t \in [0,1]$ the image $\iota(\{t\} \times [0,1])$ is included in a finite union
           of leaves and singularities of $\mathcal{F}^s$;
\item[(c)] $\forall t \in [0,1]$ the image $\iota([0,1] \times \{t\})$ is included in a finite union
           of leaves and singularities of $\mathcal{F}^u$.
\end{enumerate}
A set of the form $\iota(\{t\} \times [0,1])$ or $\iota([0,1] \times \{t\})$ is an \emph{$\mathcal{F}^s$-fiber}
or \emph{$\mathcal{F}^u$-fiber}, respectively.
A birectangle is \emph{good} if $\iota$ is an embedding.
By $\mathcal{F}(x,R)$ we are going to denote the part of the $\mathcal{F}$-fiber
going through $x$ and contained in the birectangle $R$.

A \emph{Markov partition} for a pseudo-Anosov homeomorphism $f: M \to M$ is a collection of birectangles
$R = \{R_1, \ldots, R_k\}$ such that
\begin{enumerate}
\item[(a)] $\bigcup_{i=1}^k R_i \ = \ M$;
\item[(b)] $R_i$ is a good rectangle;
\item[(c)] $\Int(R_i) \cap \Int(R_j) = \emptyset$ for $i \neq j$;
\item[(d)] if $x \in \Int(R_i)$ and $f(x) \in \Int(R_j)$, then
\begin{equation*}
  f(\mathcal{F}^s(x, R_i)) \subset \mathcal{F}^s(f(x), R_j) \quad \text{and} \quad
  f^{-1}(\mathcal{F}^u(f(x), R_j)) \subset \mathcal{F}^u(x, R_i);
\end{equation*}
\item[(e)] if $x \in \Int(R_i)$ and $f(x) \in \Int(R_j)$, then
\begin{equation*}
  f(\mathcal{F}^u(x, R_i)) \cap R_j = \mathcal{F}^u(f(x), R_j) \quad \text{and} \quad
  f^{-1}(\mathcal{F}^s(f(x), R_j)) \cap R_i = \mathcal{F}^s(x, R_i);
\end{equation*}
\end{enumerate}

\begin{thm}[\protect{\cite[Proposition 10.17]{FatLauPoe2012}}]
  A pseudo-Anosov diffeomorphism has a Markov partition.
\end{thm}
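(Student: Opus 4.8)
The plan is to follow the classical construction of Markov partitions for hyperbolic sets due to Bowen, adapted to the singular, non-smooth setting of pseudo-Anosov homeomorphisms. One builds a fine finite cover of $M$ by good birectangles adapted to the foliations $\mathcal{F}^s,\mathcal{F}^u$, refines it à la Bowen so that the pieces have disjoint interiors, and then checks that the uniform expansion/contraction of $f$ forces the resulting finite family to satisfy the Markov conditions (d) and (e).

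First I would establish that $M$ admits a finite cover by good birectangles. Away from the finitely many singular points of $\mathcal{F}^s,\mathcal{F}^u$, every point has a neighborhood with a local product structure --- a small product of an $\mathcal{F}^s$-arc and an $\mathcal{F}^u$-arc --- which is a good birectangle. Near each $p$-pronged singularity one covers a punctured neighborhood by finitely many ``sectors'', each the image of a good birectangle under an immersion $\iota$ that is an embedding on the open square but collapses part of the boundary to the singular point; this is precisely why the definition of birectangle allows $\mathcal{F}$-fibers to be finite unions of leaves and singularities. By compactness of $M$, finitely many such birectangles cover $M$. Next I would shrink this cover: pseudo-Anosov maps are \emph{expansive}, the transverse measures $\mu^s,\mu^u$ supply canonical coordinates, and $f$ uniformly expands unstable fibers by $\lambda$ and contracts stable fibers by $\lambda^{-1}$; choosing all birectangles of the previous step with transverse diameters below an expansivity constant $\varepsilon$ gives a finite cover $\mathcal{R}_0=\{R_1,\dots,R_m\}$ of small diameter.

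Then comes the Bowen refinement. For each pair $i,j$ with $\Int R_i\cap\Int R_j\neq\varnothing$, use the product structure inside $R_i$ to cut $R_i\cap R_j$ into sub-birectangles, and collect the closures of all connected components of all the resulting pieces. This yields a finite family $\mathcal{R}=\{Z_1,\dots,Z_k\}$ of good birectangles that still covers $M$, has pairwise disjoint interiors, and whose stable and unstable boundary fibers are assembled from pieces of the stable and unstable boundary fibers of the $R_i$'s. The last and most delicate step is to verify conditions (d) and (e) for $\mathcal{R}$. Here one uses that $f$ carries $\mathcal{F}^s$-leaves to $\mathcal{F}^s$-leaves and $\mathcal{F}^u$-leaves to $\mathcal{F}^u$-leaves, together with the uniform contraction/expansion and the small-diameter choice, to show that whenever $x\in\Int Z_a$ and $f(x)\in\Int Z_b$, the image $f(\mathcal{F}^s(x,Z_a))$ cannot terminate in the interior of $Z_b$ but must end on a stable boundary fiber of $Z_b$, and dually $f^{-1}(\mathcal{F}^u(f(x),Z_b))$ crosses $Z_a$; the combinatorics of how the $Z$'s were cut out of the $R$'s then upgrades this to the equalities in (d), (e).

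I expect the verification of (d) and (e) to be the main obstacle: one must track how the finitely many stable and unstable boundary fibers of the refined birectangles interact under $f$, and in particular control the picture near the singular points, where the local product structure degenerates into a $p$-pronged structure and a birectangle may wrap around a singularity via a non-embedding $\iota$. Handling the singular points is exactly the feature absent from Bowen's smooth Axiom A case, and it is the technical heart of Proposition~10.17 of \cite{FatLauPoe2012}.
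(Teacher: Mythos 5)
First, a point of comparison: the paper contains no proof of this statement at all --- it is quoted verbatim from Fathi--Laudenbach--Po\'enaru, Expos\'e 10, Proposition 10.17, and used as a black box. So there is no internal argument to measure your sketch against; it has to stand on its own. It is also worth noting that the construction in FLP is not the Bowen cover-and-refine scheme you propose: FLP build the birectangles directly from finite segments of the stable and unstable separatrices emanating from the singularities, lengthened until the stable boundary of the partition is $f$-invariant and the unstable boundary is $f^{-1}$-invariant. That choice is precisely what makes the boundary control, and hence conditions (d) and (e), automatic.

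This points to the genuine gap in your plan. The step ``cut the overlaps $R_i\cap R_j$ into sub-birectangles, take closures of connected components, and then uniform expansion/contraction plus small diameter force (d) and (e)'' does not work as stated. A finite cover of $M$ by arbitrarily small good birectangles with pairwise disjoint interiors need not be a Markov partition, and no amount of ``combinatorics of how the $Z$'s were cut out of the $R$'s'' upgrades it to one: the Markov conditions are equivalent to the stable boundary of the partition being forward-invariant, $f(\partial^s)\subset\partial^s$, and the unstable boundary backward-invariant, and your refinement does nothing to arrange this. In particular, your key assertion that $f(\mathcal{F}^s(x,Z_a))$ ``cannot terminate in the interior of $Z_b$'' is unjustified --- if the stable boundary fibers are not themselves carried into stable boundary fibers, the image of a stable fiber can perfectly well end in $\Int Z_b$. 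In Bowen's Axiom A argument this invariance is manufactured by building the rectangles from shadowed pseudo-orbits (a symbolic cover) followed by a specific two-stage subdivision; in the pseudo-Anosov setting FLP achieve it by placing all boundary fibers inside the singular leaves and taking them long enough that $f$ maps the stable skeleton into itself. Your plan invokes neither mechanism, and it likewise defers the analysis at the singularities, which you correctly identify as the technical heart but do not carry out. As written, then, this is a plausible road map with the decisive step missing, not a proof.
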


Given a Markov partition $R = \{R_1, \ldots, R_k\}$, we construct the subshift of finite type
$\sigma_A: \Omega_A \to \Omega_A$ letting $A$ be the $k \times k$ matrix defined by
\begin{align*}
  a_{ij} = \begin{cases}
    1, &f(\Int R_i) \cap \Int R_j \neq \emptyset \\
    0, &\text{otherwise}
  \end{cases}.
\end{align*}
If $(\ldots,b_{-1},b_0,b_1,\ldots) \in \Omega_A$ then $\bigcap_{i \in \Z} f^{-i}(R_{b_i})$
consists of a single point of $M$.
This gives us the projection map $p: \Omega_A \to M$
satisfying $p \circ \sigma_A = f \circ p$.

Because birectangles $R_i$ can meet on their boundaries, the map $p$ is not injective in general.
Therefore, we cannot simply count fixed points of $f$ by counting fixed points of $\sigma$.
To overcome this difficulties we can use Manning's arguments from \cite{Man71}
(for details regarding pseudo-Anosov homeomorphisms see \cite[Lemma~27]{FelshB}).
In this way we obtain finitely many subshifts of finite type $\sigma_{A_i}, i=0,1,\ldots,m$,
where $A_i$ is the transition matrix for $\sigma_{A_i}$,
such that
\begin{equation} \label{eq:pseudo-Anosov-trace-formula}
  \#\Fix f^n \ = \ \sum_{i=0}^m \varepsilon_i \cdot \#\Fix(\sigma_{A_i}^n) \ = \ \sum_{i=0}^m \varepsilon_i \cdot \tr A_i^n,
\end{equation}
where all $\varepsilon_i \in \{-1,1\}$.
The second equality is the result of Theorem~\ref{thm:trace-formula-for-subshifts}.
This leads us to the following
\begin{thm} \label{thm:pseudo-Anosov-AM-rational}
  Let $f: M \to M$ be a pseudo-Anosov homeomorphism of a closed compact surface.
  The Artin-Mazur zeta function is rational:
  \begin{equation*}
    \zeta_f(z) \ = \ \prod_{i=0}^m \det(1 - A_i z)^{-\varepsilon_i},
  \end{equation*}
  where $\epsilon_i \in \{-1,1\}$.
\end{thm}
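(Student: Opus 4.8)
The plan is to substitute the signed trace formula \eqref{eq:pseudo-Anosov-trace-formula} into the definition of the Artin--Mazur zeta function and carry out the standard formal manipulation. First I would write
\[
  \zeta_f(z) = \exp\left( \sum_{n=1}^\infty \frac{\#\Fix f^n}{n} z^n \right)
  = \exp\left( \sum_{n=1}^\infty \frac{1}{n} \Big( \sum_{i=0}^m \varepsilon_i \tr A_i^n \Big) z^n \right)
  = \prod_{i=0}^m \exp\left( \varepsilon_i \sum_{n=1}^\infty \frac{\tr A_i^n}{n} z^n \right),
\]
the interchange of the finite and infinite sums being legitimate as an identity of formal power series (equivalently, for $|z|$ small enough).

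Next I would invoke the elementary identity that for any square matrix $A$ with eigenvalues $\mu_1, \dots, \mu_k$ (listed with multiplicity) one has $\tr A^n = \sum_j \mu_j^n$, hence $\sum_{n\geq 1} \frac{\tr A^n}{n} z^n = -\sum_j \log(1 - \mu_j z) = -\log \det(\Id - Az)$, and therefore $\exp\left( \sum_{n\geq 1} \frac{\tr A^n}{n} z^n \right) = \det(\Id - Az)^{-1}$. Applying this to each $A_i$ yields $\zeta_f(z) = \prod_{i=0}^m \det(\Id - A_i z)^{-\varepsilon_i}$, which is the asserted closed form; rationality is then immediate, since each $\det(\Id - A_i z)$ is a polynomial with nonzero constant term and each $\varepsilon_i \in \{-1,1\}$, so the product of their $\pm 1$ powers is a rational function.

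There is essentially no obstacle left at this stage: the one substantive ingredient, equation \eqref{eq:pseudo-Anosov-trace-formula}, has already been assembled before the statement — the Markov partition produces the semiconjugacy $p \circ \sigma_A = f \circ p$, Manning's desingularization argument (as adapted to pseudo-Anosov homeomorphisms in \cite{FelshB}) resolves the failure of injectivity of $p$ along the boundaries of the birectangles and produces the auxiliary subshifts $\sigma_{A_i}$ together with the signs $\varepsilon_i$, and Theorem \ref{thm:trace-formula-for-subshifts} converts $\#\Fix \sigma_{A_i}^n$ into $\tr A_i^n$. Thus the proof reduces to the bookkeeping displayed above, and the only point requiring care — that the signs $\varepsilon_i$ are genuinely independent of $n$, so that the trace terms can be pulled outside the sum over $n$ — is exactly what the Manning-type argument guarantees.
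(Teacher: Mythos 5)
Your proposal is correct and follows essentially the same route as the paper: substitute the signed trace formula \eqref{eq:pseudo-Anosov-trace-formula} into the definition of $\zeta_f(z)$, pull the finite sum outside, and apply the standard identity $\exp\bigl(\sum_{n\geq 1}\tr(A^n)z^n/n\bigr)=\det(\Id-Az)^{-1}$ to each $A_i$. The only (cosmetic) difference is that you justify this identity via the eigenvalues of $A_i$, whereas the paper uses $\sum_{n\geq 1}\tr(A^n)z^n/n=-\tr\log(\Id-Az)$ together with $\exp(\tr)=\det(\exp)$.
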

\begin{proof}
  We are going to use the following identities
  \begin{align*}
    \sum_{n=1}^\infty \frac{\tr A^n}{n} z^n = -\tr \big( \log (1 - Az) \big), \quad
    \exp(\tr A) = \det(\exp A).
  \end{align*}
  Direct computation using \eqref{eq:pseudo-Anosov-trace-formula} provides us with the result:
  \begin{align*}
    \zeta_f(z) \ &= \ \exp \left( \sum_{n=1}^\infty \frac{\#\Fix(f^n)}{n} z^n \right)
                  \ = \ \exp \left( \sum_{n=1}^\infty \frac{\sum_{i=0}^m \epsilon_i \cdot \tr A_i^n}{n} z^n \right) \\
                 &= \ \exp \left( \sum_{i=0}^m \epsilon_i \sum_{n=1}^\infty \frac{\tr A_i^n}{n} z^n \right)
                  \ = \ \prod_{i=0}^m \exp \left( \sum_{n=1}^\infty \frac{\tr A_i^n}{n} z^n \right)^{\epsilon_i} \\
                 &= \ \prod_{i=0}^m \exp \Big( -\tr \big(\log(1 - A_i z) \big) \Big)^{\epsilon_i} \\
                 &= \ \prod_{i=0}^m \det \Big( \exp \big(\log(1 - A_i z) \big) \Big)^{-\epsilon_i}
                  \ = \ \prod_{i=0}^m \det(1 - A_i z)^{-\varepsilon_i}.
  \end{align*}
\end{proof}

Similarly as for Axiom A diffeomorphisms, we can use this result to state
\begin{thm}
  Let $f,g: M \to M$ be commuting homeomorphisms of a closed compact surface $M$.
  If $(g^{-1}f)$ or $(f^{-1}g)$ is pseudo-Anosov then the synchronization zeta function $S_{f,g}(z)$
  is rational.
\end{thm}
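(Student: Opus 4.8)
The plan is to reduce immediately to Theorem~\ref{thm:pseudo-Anosov-AM-rational} by means of the commutativity identity already recorded above for the Axiom~A case. First I would note that since $f$ and $g$ commute one has $fg^{-1}=g^{-1}f$, hence $(g^{-1}f)^n=g^{-n}f^n$ and $(f^{-1}g)^n=f^{-n}g^n$ for all $n\in\N$, and therefore
\[
  x\in S(f^n,g^n)\ \Longleftrightarrow\ f^n(x)=g^n(x)\ \Longleftrightarrow\ x\in\Fix\big((g^{-1}f)^n\big)\ \Longleftrightarrow\ x\in\Fix\big((f^{-1}g)^n\big).
\]
In particular $\#S(f^n,g^n)=\#\Fix\big((g^{-1}f)^n\big)$ for every $n$.

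Next I would observe that $g^{-1}f$ (respectively $f^{-1}g$) is a homeomorphism of the closed compact surface $M$, and by hypothesis one of them is pseudo-Anosov. By the trace formula \eqref{eq:pseudo-Anosov-trace-formula} a pseudo-Anosov homeomorphism of such a surface has only finitely many periodic points of each period, so the pair $(f,g)$ is synchronously tame and $S_{f,g}(z)$ is defined. Comparing power series coefficient by coefficient, the equality of fixed-point counts above yields
\[
  S_{f,g}(z)\ =\ \zeta_{g^{-1}f}(z)\ =\ \zeta_{f^{-1}g}(z).
\]

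Finally I would apply Theorem~\ref{thm:pseudo-Anosov-AM-rational} to whichever of $g^{-1}f$, $f^{-1}g$ is pseudo-Anosov, obtaining that the corresponding Artin--Mazur zeta function is rational; by the displayed equality so is $S_{f,g}(z)$, with an explicit expression of the form $\prod_{i=0}^m\det(1-A_iz)^{-\varepsilon_i}$ coming from the Markov partitions of that map (and the inclusion--exclusion correction making the projection $p$ effectively injective).

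I do not expect a genuine obstacle here: all of the analytic and combinatorial content — the construction of a Markov partition, Manning's refinement of it, and the resulting rationality — has already been absorbed into Theorem~\ref{thm:pseudo-Anosov-AM-rational}. The only point requiring a moment of care is that the commutativity identity transfers the synchronously tame hypothesis and the zeta-function equality \emph{simultaneously}, exactly as in the Axiom~A paragraph above; the present statement is simply the pseudo-Anosov (homeomorphism) analogue of that corollary.
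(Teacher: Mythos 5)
Your proposal is correct and follows essentially the same route as the paper: use commutativity to identify $S(f^n,g^n)$ with $\Fix\big((g^{-1}f)^n\big)=\Fix\big((f^{-1}g)^n\big)$, deduce $S_{f,g}(z)=\zeta_{g^{-1}f}(z)=\zeta_{f^{-1}g}(z)$, and invoke Theorem~\ref{thm:pseudo-Anosov-AM-rational}. Your extra remark that the pseudo-Anosov hypothesis already forces finiteness of the fixed-point counts (hence synchronous tameness) is a small, correct addition not spelled out in the paper.
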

\begin{proof}
  Observe that when $f, g$ commute then
  \begin{equation*}
    x \in \Fix \left( (g^{-1}f)^n \right) \ \Leftrightarrow \ x \in S(f^n, g^n) \ \Leftrightarrow \
      x \in \Fix \left( (f^{-1}g)^n \right)
  \end{equation*}
  for all $n \in \N$.
  Therefore
  \begin{align*}
    S_{f,g}(z) \ &= \ \exp \left( \sum_{n=1}^\infty \frac{\# S(f^n,g^n)}{n} z^n \right)
                  \ = \ \exp \left( \sum_{n=1}^\infty \frac{\# \Fix\left((g^{-1}f)^n\right)}{n} z^n \right) \\
                 &= \ \zeta_{g^{-1}f}(z) \ = \ \zeta_{f^{-1}g}(z) .
  \end{align*}
  Now if $(g^{-1}f)$ or $(f^{-1}g)$ is pseudo-Anosov, then Theorem~\ref{thm:pseudo-Anosov-AM-rational}
  gives us rationality of $S_{f,g}(z)$.
\end{proof}

\subsection{Synchronization zeta function on finite sets}

\begin{thm} \label{thm:synchronization-commuting-permutations}
  Let X be a finite set and $\sigma_1, \sigma_2: X \to X$ two commuting bijections,
  i.e. commuting permutations.
  Then the synchronization zeta function $S_{\sigma_1, \sigma_2}(z)$ is rational.
\end{thm}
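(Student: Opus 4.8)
The plan is to reduce to the commuting case and express $\# S(\sigma_1^n,\sigma_2^n)$ as the number of fixed points of $(\sigma_2^{-1}\sigma_1)^n$, exactly as was done for commuting diffeomorphisms and pseudo-Anosov homeomorphisms. Since $\sigma_1,\sigma_2$ commute, $\tau := \sigma_2^{-1}\sigma_1$ is a well-defined permutation of $X$, and for every $n$ one has $x \in S(\sigma_1^n,\sigma_2^n) \Leftrightarrow \sigma_1^n(x) = \sigma_2^n(x) \Leftrightarrow \tau^n(x) = x \Leftrightarrow x \in \Fix(\tau^n)$. Hence $\# S(\sigma_1^n,\sigma_2^n) = \#\Fix(\tau^n)$ for all $n \in \N$, and therefore $S_{\sigma_1,\sigma_2}(z) = \zeta_\tau(z)$, the Artin--Mazur zeta function of the single permutation $\tau$.

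Next I would compute $\zeta_\tau(z)$ for a permutation $\tau$ of a finite set using its cycle decomposition. If $\tau$ decomposes into cycles of lengths $\ell_1,\dots,\ell_r$, then a point lying on a cycle of length $\ell$ is fixed by $\tau^n$ precisely when $\ell \mid n$. Thus $\#\Fix(\tau^n) = \sum_{j=1}^r \ell_j \cdot [\ell_j \mid n]$, and grouping cycles by their common length, if $c_\ell$ denotes the number of $\ell$-cycles of $\tau$, then $\#\Fix(\tau^n) = \sum_{\ell \mid n} \ell\, c_\ell$. Plugging into the definition and interchanging summations gives
\begin{align*}
  S_{\sigma_1,\sigma_2}(z) = \zeta_\tau(z)
    &= \exp\left( \sum_{n=1}^\infty \frac{1}{n} \Big( \sum_{\ell \mid n} \ell\, c_\ell \Big) z^n \right)
     = \exp\left( \sum_{\ell \geq 1} c_\ell \sum_{m=1}^\infty \frac{z^{\ell m}}{m} \right) \\
    &= \exp\left( \sum_{\ell \geq 1} -c_\ell \log(1 - z^\ell) \right)
     = \prod_{\ell \geq 1} (1 - z^\ell)^{-c_\ell},
\end{align*}
a finite product since only finitely many $c_\ell$ are nonzero. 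This is manifestly a rational function, which proves the theorem.

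There is really no serious obstacle here; the statement is essentially a bookkeeping exercise once the reduction $S_{\sigma_1,\sigma_2}(z) = \zeta_{\sigma_2^{-1}\sigma_1}(z)$ is in place, and that reduction is immediate from commutativity and bijectivity. The only point requiring a word of care is the interchange of the order of summation in the exponent, which is justified because everything converges absolutely for $|z|$ small (indeed each inner series $\sum_m z^{\ell m}/m$ converges for $|z| < 1$ and the outer sum over $\ell$ is finite), and rationality is then a statement about the resulting formal power series / meromorphic function. It is also worth remarking that the same argument shows more generally that the Artin--Mazur zeta function of any permutation of a finite set is rational, and that one may equally write $S_{\sigma_1,\sigma_2}(z) = \zeta_{\sigma_1^{-1}\sigma_2}(z)$ since $\sigma_1^{-1}\sigma_2$ is the inverse permutation of $\tau$ and $\Fix(\tau^n) = \Fix(\tau^{-n})$.
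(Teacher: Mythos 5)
Your proof is correct and follows essentially the same route as the paper: reduce via commutativity to $\#S(\sigma_1^n,\sigma_2^n)=\#\Fix\big((\sigma_2^{-1}\sigma_1)^n\big)$, count fixed points through the cycle (orbit) decomposition, and obtain the finite product $\prod_{\ell}(1-z^{\ell})^{-c_\ell}$, which is the paper's $\prod_{[x]}(1-z^{\#[x]})^{-1}$ with cycles merely grouped by length. No gaps; the argument matches the paper's proof in substance.
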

\begin{proof}
  For $\sigma_1^n(x) = \sigma_2^n(x)$ iff $(\sigma_2^{-n} \circ \sigma_1^n)(x) = x$, we have
  $\# S(\sigma_1^n, \sigma_2^n) = \# \Fix\left((\sigma_2^{-1}\sigma_1)^n\right)$.
  Let us denote the orbit of $x \in X$ under $\sigma_2^{-1}\sigma_1$ by $[x]$ and its order by $\#[x]$.
  Then
  \begin{equation*}
    \# \Fix\left((\sigma_2^{-1}\sigma_1)^n\right) = \sum_{\substack{[x] \\ \#[x] | n}} \#[x]
  \end{equation*}
  and
  \begin{align} \label{eq:finite-set-synchrozniation}
  \begin{split}
    S_{\sigma_1, \sigma_2}(z) \ &= \ \exp \left( \sum_{n=1}^\infty \frac{\# S(\sigma_1^n, \sigma_2^n)}{n} z^n \right)
      \ = \ \exp \left( \sum_{[x]} \sum_{\substack{n=1 \\ \#[x]|n}}^\infty \frac{\#[x]}{n} z^n \right) \\
    &= \ \prod_{[x]} \exp \left( \sum_{n=1}^\infty \frac{\#[x]}{\#[x]n} z^n \right)
      \ = \ \prod_{[x]} \exp \left( \sum_{n=1}^\infty \frac{1}{n} z^{\#[x]n} \right) \\
    &= \ \prod_{[x]} \exp \left( -\log \left( 1 - z^{\#[x]} \right)\right)
      \ = \ \prod_{[x]} \frac{1}{1 - z^{\#[x]}}.
  \end{split}
  \end{align}
  As $X$ is finite, the product is finite as well, and $S_{\sigma_1, \sigma_2}(z)$ is rational.
\end{proof}

\begin{rmk} \label{thm:example-synchronization-finite-algebraic}
  Without the commutativity assumption the theorem is not true in general.
  Let $X = \{0, 1, 2\}$ and
  \begin{equation*}
    \sigma_1 = \begin{pmatrix}0 & 1 & 2 \\ 0 & 2 & 1\end{pmatrix}, \qquad
    \sigma_2 = \begin{pmatrix}0 & 1 & 2 \\ 1 & 0 & 2\end{pmatrix}.
  \end{equation*}
  We have $\sigma_1 \left( \sigma_2(0) \right) = 2 \neq 1 = \sigma_2 \left( \sigma_1(0) \right)$.
  Observe that
  \begin{align*}
    \# S(\sigma_1, \sigma_2) = 0, \quad \# S(\sigma_1^2, \sigma_2^2) = \# S(\Id, \Id) = \# X = 3.
  \end{align*}
  Therefore the synchronization zeta function
  \begin{align*}
    S_{\sigma_1, \sigma_2}(z) \ &= \ \exp \left( \sum_{n=1}^\infty \frac{3}{2} \cdot \frac{(z^2)^n}{n} \right)
      \ = \ \exp \left( -\frac{3}{2} \log(1-z^2) \right) \\
    &= \ \frac{1}{\sqrt{(1-z^2)^3}}
  \end{align*}
  is \emph{algebraic}, not rational.
\end{rmk}

\begin{thm}
  Let $X$ be a finite set and $\sigma_1, \sigma_2: X \to X$ two commuting bijections, i.e. commuting permutations.
  The synchronization zeta function has the following functional equation
  \begin{equation*}
    S_{\sigma_1, \sigma_2} \left( \frac{1}{z} \right) \ = \ (-1)^p z^q S_{\sigma_1, \sigma_2}(z)
  \end{equation*}
  where $q$ is the number of periodic elements of $\sigma_2^{-1}\sigma_1$
  and $p$ is the number of periodic orbits of $\sigma_2^{-1}\sigma_1$.
\end{thm}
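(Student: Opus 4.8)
The plan is to exploit the explicit product formula obtained in Theorem~\ref{thm:synchronization-commuting-permutations}. From equation~\eqref{eq:finite-set-synchrozniation} we already know that
\begin{equation*}
  S_{\sigma_1, \sigma_2}(z) \ = \ \prod_{[x]} \frac{1}{1 - z^{\#[x]}},
\end{equation*}
where the product runs over the orbits $[x]$ of the permutation $\sigma_2^{-1}\sigma_1$ on $X$. I would first rewrite this as a product over the \emph{distinct} orbits, so that each orbit contributes exactly once; let the orbit lengths be $\ell_1, \dots, \ell_p$, where $p$ is the number of orbits, and note $q = \sum_{j=1}^p \ell_j$ is the total number of periodic elements (which, since $\sigma_2^{-1}\sigma_1$ is a bijection of the finite set $X$, is simply $\# X$ — but stating it in terms of $q$ keeps the formula self-contained). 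Thus $S_{\sigma_1,\sigma_2}(z) = \prod_{j=1}^p (1 - z^{\ell_j})^{-1}$.

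The second step is a purely algebraic manipulation of a single factor. For each $j$ one computes
\begin{equation*}
  \frac{1}{1 - (1/z)^{\ell_j}} \ = \ \frac{z^{\ell_j}}{z^{\ell_j} - 1} \ = \ \frac{-z^{\ell_j}}{1 - z^{\ell_j}} \ = \ -z^{\ell_j} \cdot \frac{1}{1 - z^{\ell_j}}.
\end{equation*}
Taking the product over $j = 1, \dots, p$ then yields
\begin{equation*}
  S_{\sigma_1, \sigma_2}\!\left( \frac{1}{z} \right) \ = \ \prod_{j=1}^p \left( -z^{\ell_j} \cdot \frac{1}{1 - z^{\ell_j}} \right) \ = \ (-1)^p \, z^{\sum_{j=1}^p \ell_j} \prod_{j=1}^p \frac{1}{1 - z^{\ell_j}} \ = \ (-1)^p \, z^{q} \, S_{\sigma_1, \sigma_2}(z),
\end{equation*}
which is exactly the claimed functional equation.

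There is essentially no hard part here: once the closed product form is in hand, the result is a one-line substitution together with the bookkeeping identity $q = \sum_j \ell_j$. The only point deserving care is the convention for what gets counted — making sure that ``number of periodic orbits'' $p$ matches the number of factors in the product (one per orbit, not one per element of an orbit) and that ``number of periodic elements'' $q$ is the sum of the orbit lengths rather than the number of orbits. I would state this bookkeeping explicitly at the start of the proof to avoid any off-by-counting ambiguity, and then the computation above finishes it.
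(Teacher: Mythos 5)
Your proposal is correct and follows essentially the same route as the paper's own proof: both start from the product formula $S_{\sigma_1,\sigma_2}(z)=\prod_{[x]}(1-z^{\#[x]})^{-1}$ of Theorem~\ref{thm:synchronization-commuting-permutations}, substitute $1/z$ factor by factor, and collect $(-1)^p z^{q}$ using $q=\sum_{[x]}\#[x]$. Your explicit bookkeeping remark (one factor per orbit, $q$ the sum of orbit lengths, here equal to $\#X$ since every element of a finite permutation is periodic) is a harmless clarification of what the paper leaves implicit.
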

\begin{proof}
  Using \eqref{eq:finite-set-synchrozniation} we write
  \begin{align*}
    S_{\sigma_1, \sigma_2}\left(\frac{1}{z}\right) \ &= \ \prod_{[x]} \frac{1}{1 - z^{-\#[x]}}
      \ = \ \prod_{[x]} \frac{z^{\#[x]}}{z^{\#[x]} - 1} \\
    &= \ \prod_{[x]} \frac{-z^{\#[x]}}{1 - z^{\#[x]}}
      \ = \ \prod_{[x]} -z^{\#[x]} \cdot \prod_{[x]} \frac{1}{1 - z^{\#[x]}} \\
    &= \ \prod_{[x]} -z^{\#[x]} \cdot S_{\sigma_1, \sigma_2}(z).
  \end{align*}
  The statement follows because $\sum_{[x]} \# [x] = q$.
\end{proof}

\begin{thm} \label{thm:synch-numbers-permutations-finite-set}
  Let $\sigma_i, \sigma_j : X \to X$ be two bijections, i.e. permutations of a finite set.
  Then the following hold
  \begin{enumerate}
  \item[\rm (i)] The sequence of synchronization numbers $\{\#S(\sigma_i^n, \sigma_j^n)\}_{n=1}^\infty$ is periodic.
  \item[\rm (ii)] If $L$ is the length of the period of the sequence of synchronization numbers
      then for $k=1,\ldots,\lfloor L/2 \rfloor$
      \begin{equation*} %\label{eq:S^k=S^L-k}
        \#S(\sigma_i^k, \sigma_j^k) = \#S(\sigma_i^{L-k}, \sigma_j^{L-k})
      \end{equation*}
  \end{enumerate}
\end{thm}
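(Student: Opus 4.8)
The plan is to derive (i) directly from the finiteness of the symmetric group, and to obtain (ii) by showing that the two‑sided sequence $n\mapsto \#S(\sigma_i^n,\sigma_j^n)$, $n\in\Z$ (well defined since permutations are invertible), is \emph{even} and then combining this with the periodicity.

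For (i): let $a=\ord\sigma_i$ and $b=\ord\sigma_j$, both finite, and put $N=\LCM(a,b)$, so that $\sigma_i^N=\sigma_j^N=\Id$. Then $\sigma_i^{n+N}=\sigma_i^n$ and $\sigma_j^{n+N}=\sigma_j^n$ for all $n$, whence $S(\sigma_i^{n+N},\sigma_j^{n+N})=S(\sigma_i^n,\sigma_j^n)$ as subsets of $X$, and in particular the synchronization numbers agree. Thus $\{\#S(\sigma_i^n,\sigma_j^n)\}$ is periodic, and its minimal period $L$ divides $N$.

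For (ii): the first step is to rewrite the synchronization numbers group‑theoretically. For any $n\in\Z$ one has $\sigma_i^n(x)=\sigma_j^n(x)\iff (\sigma_j^{-n}\sigma_i^n)(x)=x$, so $\#S(\sigma_i^n,\sigma_j^n)=\#\Fix(\sigma_j^{-n}\sigma_i^n)$. Now use two elementary invariances of the number of fixed points of a permutation $g$ of a finite set: $\#\Fix(g)=\#\Fix(g^{-1})$, and $\#\Fix(hgh^{-1})=\#\Fix(g)$ for any $h$. Since $\sigma_i^n\sigma_j^{-n}$ and $\sigma_j^{-n}\sigma_i^n$ are conjugate (the identity $ab=a(ba)a^{-1}$ with $a=\sigma_i^n$, $b=\sigma_j^{-n}$) and $(\sigma_i^n\sigma_j^{-n})^{-1}=\sigma_j^n\sigma_i^{-n}$, we get
\[
  \#\Fix(\sigma_j^{-n}\sigma_i^n)=\#\Fix(\sigma_i^n\sigma_j^{-n})=\#\Fix(\sigma_j^n\sigma_i^{-n})=\#S(\sigma_i^{-n},\sigma_j^{-n}),
\]
that is, $\#S(\sigma_i^n,\sigma_j^n)=\#S(\sigma_i^{-n},\sigma_j^{-n})$ for all $n\in\Z$. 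Finally, fix $1\le k\le\lfloor L/2\rfloor$ and write $N=mL$ (possible as $L\mid N$). Since $L-k\ge 1$, iterating the period $L$ gives $\#S(\sigma_i^{L-k},\sigma_j^{L-k})=\#S(\sigma_i^{N-k},\sigma_j^{N-k})$; using $\sigma_i^N=\sigma_j^N=\Id$ this equals $\#S(\sigma_i^{-k},\sigma_j^{-k})$, which by the evenness just proved equals $\#S(\sigma_i^{k},\sigma_j^{k})$, as desired.

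The step I expect to require the most care is the bookkeeping around the minimal period $L$: the natural reflection symmetry lives at the exponent $N=\LCM(\ord\sigma_i,\ord\sigma_j)$ where both permutations return to the identity, and one must descend from $N$ to the possibly smaller $L$ using $L\mid N$ and the positivity of all exponents occurring, rather than assuming $\sigma_i^L=\sigma_j^L=\Id$. The algebraic ingredients — that $\#\Fix$ is invariant under inversion and conjugation, and that $ab$ is conjugate to $ba$ — are routine.
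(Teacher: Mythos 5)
Your proposal is correct and follows essentially the same route as the paper: periodicity via $\LCM$ of the orders for (i), and for (ii) the evenness $\#S(\sigma_i^n,\sigma_j^n)=\#S(\sigma_i^{-n},\sigma_j^{-n})$ (which the paper gets from the explicit bijection $x\mapsto\sigma_i^n(x)$, you from conjugation and inversion invariance of fixed-point counts) combined with periodicity. Your extra bookkeeping descending from $N=\LCM(\ord\sigma_i,\ord\sigma_j)$ to the minimal period $L$ via $L\mid N$ is a welcome refinement, since the paper simply takes $L=\LCM(\ord\sigma_i,\ord\sigma_j)$ and does not address the case where the minimal period is a proper divisor of it.
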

\begin{proof}
(i) Let $k_i, k_j$ be ranks of $\sigma_i, \sigma_j$ in the group of permutations of $X$, i.e.
  \begin{equation*}
    \sigma_i^{k_i} = \Id = \sigma_j^{k_j}.
  \end{equation*}
  Denote $L := \LCM(k_i,k_j)$. This is the least natural number for which both $\sigma_i^L = \sigma_j^L = \Id$.
  It is clear that $\#S(\sigma_i^n,\sigma_j^n) = \#S(\sigma_i^{n+L},\sigma_j^{n+L})$ for $n \in \N$.

(ii) Observe that if $\sigma_i(x) = \sigma_j(x)$ then
  \begin{equation*}
    \sigma_j^{-1} \left( \sigma_i(x) \right) =  \sigma_j^{-1} \left( \sigma_j(x) \right) = x
    = \sigma_i^{-1} \left( \sigma_i(x) \right)
  \end{equation*}
  therefore synchronization points of $\sigma_i,\sigma_j$ are in bijection with synchronization points
  of $\sigma_i^{-1},\sigma_j^{-1}$ and $\#S(\sigma_i^n,\sigma_j^n) = \#S(\sigma_i^{-n},\sigma_j^{-n})$.
  The result follows from periodicity.
\end{proof}

\begin{thm} \label{thm:sychronization-zeta-log-derivative-rational}
  Let $\sigma_i,\sigma_j: X \to X$ be two bijections, i.e. permutations, of a finite set.
  Then the logarithmic derivative of the synchronization zeta function $S_{\sigma_i, \sigma_j}(z)$
  is rational and is equal
  \begin{align} \label{eq:synchronization-zeta-log-derivative}
    \left[ \ln S_{\sigma_i, \sigma_j}(z) \right]' = \frac{S_{\sigma_i, \sigma_j}'(z)}{S_{\sigma_i, \sigma_j}(z)}
      = \frac{\sum_{n=1}^{L} \#S(\sigma_i^n, \sigma_j^n) z^{n-1}}{1-z^L}.
  \end{align}
\end{thm}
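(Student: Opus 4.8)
The plan is to differentiate the defining exponential term by term and then collapse the resulting series using the periodicity of the synchronization numbers established in Theorem~\ref{thm:synch-numbers-permutations-finite-set}.

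First I would set $s_n := \#S(\sigma_i^n,\sigma_j^n)$ and note that, since $X$ is finite, $s_n \le \#X$ for every $n$; hence the series $\sum_{n\ge1}\frac{s_n}{n}z^n$ converges on the open unit disc, where $S_{\sigma_i,\sigma_j}(z)=\exp\big(\sum_{n\ge1}\frac{s_n}{n}z^n\big)$ is analytic and nowhere zero. On that disc one may differentiate term by term, so
$$\big[\ln S_{\sigma_i,\sigma_j}(z)\big]' \;=\; \frac{S_{\sigma_i,\sigma_j}'(z)}{S_{\sigma_i,\sigma_j}(z)} \;=\; \sum_{n\ge1} s_n\,z^{n-1}.$$

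Next I would invoke Theorem~\ref{thm:synch-numbers-permutations-finite-set}(i): the sequence $\{s_n\}$ is periodic, with $s_{n+L}=s_n$ for all $n\ge1$, where $L=\LCM(k_i,k_j)$ and $k_i,k_j$ are the orders of $\sigma_i,\sigma_j$ in the symmetric group on $X$. Writing each index as $n=qL+r$ with $q\ge0$ and $1\le r\le L$, periodicity gives $s_n=s_r$, and the geometric resummation
$$\sum_{n\ge1} s_n z^{n-1} \;=\; \sum_{q\ge0}\sum_{r=1}^{L} s_r\,z^{qL+r-1} \;=\; \Big(\sum_{r=1}^{L} s_r z^{r-1}\Big)\sum_{q\ge0} z^{qL} \;=\; \frac{\sum_{r=1}^{L}\#S(\sigma_i^r,\sigma_j^r)\,z^{r-1}}{1-z^L}$$
yields exactly \eqref{eq:synchronization-zeta-log-derivative}; since the right-hand side is a quotient of polynomials in $z$, the logarithmic derivative is rational.

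I do not expect a genuine obstacle here. The only points requiring a little care are the justification of term-by-term differentiation (immediate from the uniform bound $s_n\le\#X$, which also forces the radius of convergence to be at least $1$) and the index bookkeeping in the resummation into residue classes modulo $L$; the substance of the argument is carried entirely by the periodicity statement already proved in Theorem~\ref{thm:synch-numbers-permutations-finite-set}.
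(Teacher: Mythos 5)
Your proof is correct and follows essentially the same route as the paper: differentiate the exponential to obtain $\sum_{n\ge1}\#S(\sigma_i^n,\sigma_j^n)z^{n-1}$ and then use the periodicity from Theorem~\ref{thm:synch-numbers-permutations-finite-set}(i) to collapse this series into the form $P(z)/(1-z^L)$. The only cosmetic difference is that you resum by splitting indices into residue classes modulo $L$ and summing a geometric series, whereas the paper multiplies by $1-z^L$ and observes that the differences $\#S(\sigma_i^n,\sigma_j^n)-\#S(\sigma_i^{n-L},\sigma_j^{n-L})$ vanish; your added remark on term-by-term differentiation is a harmless extra justification.
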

\begin{proof}
  Direct computation gives us
  \begin{align*}
  \begin{split}
    \frac{S_{\sigma_i, \sigma_j}'(z)}{S_{\sigma_i, \sigma_j}(z)}
      &= \frac{\exp \left( \sum_{n=1}^\infty \frac{\#S(\sigma_i^n, \sigma_j^n)}{n} z^n \right)
      \cdot \sum_{n=1}^\infty \#S(\sigma_i^n, \sigma_j^n) z^{n-1}}
      {\exp \left( \sum_{n=1}^\infty \frac{\#S(\sigma_i^n, \sigma_j^n)}{n} z^n \right)} \\
    &= \sum_{n=1}^\infty \#S(\sigma_i^n, \sigma_j^n) z^{n-1}.
  \end{split}
  \end{align*}
  Since for $n = L+1, L+2, \ldots$ we have $\#S(\sigma_i^n, \sigma_j^n) - \#S(\sigma_i^{n-L}, \sigma_j^{n-L}) = 0$,
  another direct check gives us
  \begin{align*}
  \begin{split}
    (1 - z^L) \cdot \sum_{n=1}^\infty \#S(\sigma_i^n, \sigma_j^n) z^{n-1}
      &= \sum_{n=1}^L \#S(\sigma_i^n, \sigma_j^n) z^{n-1}
      + \sum_{n=L+1}^\infty \left( \#S(\sigma_i^n, \sigma_j^n)
      - \#S(\sigma_i^{n-L}, \sigma_j^{n-L}) \right) z^{n-1} \\
    &= \sum_{n=1}^L \#S(\sigma_i^n, \sigma_j^n) z^{n-1}
  \end{split}
  \end{align*}
  what, after dividing by $(1-z^L)$, gives us \eqref{eq:synchronization-zeta-log-derivative}.
\end{proof}

\begin{thm} \label{thm:synchronization-zeta-form-bijections-finite}
  Let $X$ be a finite set and $\sigma_1, \sigma_2: X \to X$ two bijections, i.e. permutations.
  Let $k_1, k_2 \in \N$ be such that $\sigma_1^{k_1} = \Id = \sigma_2^{k_2}$
  and denote $L := \LCM(k_1,k_2)$.
  Let $\omega := \exp(2\pi\iota/L)$ be the $L$-th root of $1$ and $\Re(\omega)$ its real part.

  Then the synchronization zeta function is of the form
  \begin{align} \label{eq:synchronization-permutations}
    S_{\sigma_1,\sigma_2}(z) = \begin{cases}
      (1-z)^{A_0} \cdot \prod_{k=1}^{\lfloor L/2 \rfloor} \left( z^2 - 2\Re(\omega^k)z+1 \right)^{A_k}, & 2 \!\not|\, L \\[1ex]
      (1-z)^{A_0}(1+z)^{A_{L/2}} \cdot \prod_{k=1}^{L/2-1} \left( z^2 - 2\Re(\omega^k)z+1 \right)^{A_k}, & 2 \,\vert\, L
    \end{cases},
  \end{align}
  where all $A_k \in \R$.
\end{thm}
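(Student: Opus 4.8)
The plan is to integrate the logarithmic derivative of $S_{\sigma_1,\sigma_2}$ provided by Theorem~\ref{thm:sychronization-zeta-log-derivative-rational}, and then to identify the exponents in the resulting product by means of the symmetry of the synchronization numbers established in Theorem~\ref{thm:synch-numbers-permutations-finite-set}(ii). Note that commutativity of $\sigma_1,\sigma_2$ is \emph{not} assumed here, so the product formula from Theorem~\ref{thm:synchronization-commuting-permutations} is unavailable; the route through the rational logarithmic derivative applies regardless.

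Write $a_n := \#S(\sigma_1^n,\sigma_2^n)$ and $P(z):=\sum_{n=1}^{L} a_n z^{n-1}$, so that \eqref{eq:synchronization-zeta-log-derivative} reads $S_{\sigma_1,\sigma_2}'(z)/S_{\sigma_1,\sigma_2}(z)=P(z)/(1-z^L)$. Since $1-z^L=\prod_{k=0}^{L-1}(1-\omega^k z)$ has $L$ distinct simple roots and $\deg P\le L-1$, partial fractions---using $\prod_{m=1}^{L-1}(1-\omega^m)=L$ to evaluate the residues---give
\[
  \frac{P(z)}{1-z^L}\ =\ \sum_{k=0}^{L-1}\frac{c_k}{1-\omega^k z},\qquad c_k=\tfrac{1}{L}\,P(\omega^{-k}).
\]
Integrating term by term and using $S_{\sigma_1,\sigma_2}(0)=1$ to annihilate the constant of integration, I obtain, as power series about $z=0$,
\[
  S_{\sigma_1,\sigma_2}(z)\ =\ \prod_{k=0}^{L-1}(1-\omega^k z)^{A_k},\qquad A_k:=-\frac{c_k}{\omega^k}=-\tfrac{1}{L}\,\omega^{-k}P(\omega^{-k}),
\]
each factor being understood through its binomial expansion with constant term $1$.

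Next I would show $A_k\in\R$ and $A_k=A_{L-k}$ for every $k$. Extend $(a_n)$ to an $L$-periodic sequence. The argument in the proof of Theorem~\ref{thm:synch-numbers-permutations-finite-set}(ii) gives $\#S(\sigma_1^n,\sigma_2^n)=\#S(\sigma_1^{-n},\sigma_2^{-n})$ for all $n$, hence, by $L$-periodicity (valid since $\sigma_1^L=\sigma_2^L=\Id$), $a_n=a_{L-n}$ also for our $L=\LCM(k_1,k_2)$; that is, $(a_n)$ is even. A direct reindexing identifies $\omega^{-k}P(\omega^{-k})=\sum_{n=0}^{L-1}a_n\omega^{-kn}$ as a discrete Fourier coefficient of a real even sequence, hence real; and since complex conjugation replaces $\omega^{-k}$ by $\omega^{k}=\omega^{-(L-k)}$, one gets $\overline{\omega^{-k}P(\omega^{-k})}=\omega^{-(L-k)}P(\omega^{-(L-k)})$, which, being also real, forces $A_k=A_{L-k}$. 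In particular $A_0=-\tfrac{1}{L}\sum_{n=1}^{L}a_n$ and, when $L$ is even, $A_{L/2}=-\tfrac{1}{L}\sum_{n=1}^{L}(-1)^{n-1}a_n$ are real.

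Finally I would pair the factors according to conjugate roots of unity: for $1\le k\le L-1$ with $k\ne L/2$ the roots $\omega^k$ and $\omega^{L-k}=\overline{\omega^k}$ are conjugate with common exponent $A_k=A_{L-k}$, and $(1-\omega^k z)(1-\omega^{L-k}z)=z^2-2\Re(\omega^k)z+1$; the index $k=0$ contributes $(1-z)^{A_0}$, and when $L$ is even the index $k=L/2$ contributes $(1-\omega^{L/2}z)^{A_{L/2}}=(1+z)^{A_{L/2}}$. Collecting these factors yields precisely \eqref{eq:synchronization-permutations}. The step requiring most care is the passage from the rational logarithmic derivative back to $S_{\sigma_1,\sigma_2}$ with non-integer real exponents: one verifies that each $(1-\omega^k z)^{A_k}$ is a well-defined power series with value $1$ at $z=0$, so the product is a well-defined power series whose logarithmic derivative equals $P(z)/(1-z^L)$ and whose value at $0$ is $1$, and these two properties determine $S_{\sigma_1,\sigma_2}$ uniquely; one should also double-check the coefficient symmetry of $P$ at the boundary, i.e.\ that its $n=L$ coefficient ($=\#X$) agrees with the periodically continued $n=0$ term.
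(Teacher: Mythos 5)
Your proposal is correct and takes essentially the same route as the paper: partial-fraction decomposition of the rational logarithmic derivative \eqref{eq:synchronization-zeta-log-derivative} at the $L$-th roots of unity, reality of the exponents via the symmetry $\#S(\sigma_1^n,\sigma_2^n)=\#S(\sigma_1^{L-n},\sigma_2^{L-n})$ from Theorem~\ref{thm:synch-numbers-permutations-finite-set}(ii), and pairing of conjugate roots into the quadratic factors --- your discrete-Fourier phrasing, the normalization $S_{\sigma_1,\sigma_2}(0)=1$, and the remark that $L=\LCM(k_1,k_2)$ need not be the minimal period are welcome refinements of presentation, not a different method. The only slip is incidental: with your $A_k=-\tfrac1L\,\omega^{-k}P(\omega^{-k})$ one gets $A_{L/2}=+\tfrac1L\sum_{n=1}^{L}(-1)^{n-1}a_n$ rather than the minus sign you wrote (e.g.\ in Remark~\ref{thm:example-synchronization-finite-algebraic} one has $A_{L/2}=-\tfrac32$), which does not affect the argument since only $A_{L/2}\in\R$ is used.
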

\begin{proof}
  When we consider the logarithmic derivative of the synchronization zeta function formally, we can write
  \begin{align*}
    \left[ \ln S_{\sigma_i, \sigma_j}(z) \right]' &= \frac{S_{\sigma_i, \sigma_j}'(z)}{S_{\sigma_i, \sigma_j}(z)} \\
    \ln S_{\sigma_i, \sigma_j}(z) &= \int \frac{S_{\sigma_i, \sigma_j}'(z)}{S_{\sigma_i, \sigma_j}(z)} dz \\
    S_{\sigma_i, \sigma_j}(z) &= \exp \left( \int \frac{S_{\sigma_i, \sigma_j}'(z)}{S_{\sigma_i, \sigma_j}(z)} dz \right)
  \end{align*}
  We will integrate the logarithmic derivative using the form of Theorem~\ref{thm:sychronization-zeta-log-derivative-rational}.
  For simplicity, denote the numerator $\sum_{n=1}^{L} \#S(\sigma_i^n, \sigma_j^n) z^{n-1} =: P(z) \in \Z[z]$.
  The denominator has $L$ different roots which are $L$-th roots of $1$, i.e. $\omega^k, k=0,1,\ldots,L-1$
  therefore we factorize
  \begin{align*}
    z^L-1 = (z-1)(z-\omega)\cdots(z-\omega^{L-1}).
  \end{align*}
  Performing partial fraction decomposition we get
  \begin{align*}
    \frac{-P(z)}{z^L-1} &= \sum_{k=0}^{L-1} \frac{C_k}{z-\omega^k}
  \end{align*}
  and, after reducing the right side to a common denominator and denoting by $\widehat{(z-\omega^k)}$
  the missing term in the product,
  \begin{align*}
    \sum_{k=0}^{L-1} \frac{C_k}{z-\omega^k}
    &= \frac{\sum_{k=0}^{L-1} C_k \cdot (z-\omega^0) \cdots \widehat{(z-\omega^k)} \cdots (z-\omega^{L-1})}{z^L-1}.
  \end{align*}
  Observe that the derivative
  \begin{align*}
    (z^L-1)' = \left[ \prod_{k=0}^{L-1} (z-\omega^k) \right]'
      = \sum_{k=0}^{L-1} (z-\omega^0) \cdots \widehat{(z-\omega^k)} \cdots (z-\omega^{L-1})
  \end{align*}
  therefore, comparing numerators and substituting the roots one by one, for $k=0,\ldots,L-1$ we have
  \begin{align*}
    -P(\omega^k) &= C_k \cdot (z^L-1)'(\omega^k) = C_k \cdot L\omega^{k(L-1)} \\
    C_k &= \frac{-P(\omega^k)}{L\omega^{k(L-1)}}
  \end{align*}

  We are going to show that all $C_k$ are in fact real.
  If $L$ is even, then $\omega^{L/2} = \exp((2\pi\iota/L) \cdot (L/2)) = -1$
  is a single real root which we will consider separately.
  Regardless of parity of $L$, $1$ always is a single real root.
  We therefore have
  \begin{align*}
    C_0 = \frac{-P(1)}{L} \in \Q, \quad C_{L/2} = \frac{-P(-1)}{\pm L} \in \Q \text{ (for $L$ even)}.
  \end{align*}
  Now consider $L$ odd and $k$ such that $\omega^k \notin \R$. The imaginary part
  \begin{align*}
    \Im C_k &= \Im \left( -\frac{1}{L} \sum_{n=1}^L \#S(\sigma_i^n,\sigma_j^n) \omega^{k(n-1)+k(1-L)} \right) \\
    &= -\frac{1}{L} \sum_{n=1}^L \#S(\sigma_i^n,\sigma_j^n) \Im \omega^{kn-kL} \\
    &= -\frac{1}{L} \sum_{n=1}^L \#S(\sigma_i^n,\sigma_j^n) \Im\frac{\omega^{kn}}{\left(\omega^{L}\right)^k} \\
    &= -\frac{1}{L} \sum_{n=1}^L \#S(\sigma_i^n,\sigma_j^n) \Im \left(\omega^{k}\right)^n \\
  \intertext{Note that $(\omega^k)^L=1$ and for $n=1,\ldots,\lfloor L/2 \rfloor$ the terms appears in complex
      conjugate pairs $\Im(\omega^{k})^n = -\Im(\omega^{k})^{L-n}$ so that we can rewrite the last sum}
    &= -\frac{1}{L} \sum_{n=1}^{\lfloor L/2 \rfloor} \left[\#S(\sigma_i^n,\sigma_j^n)
      \Im \left(\omega^{k}\right)^n - \#S(\sigma_i^{L-n},\sigma_j^{L-n}) \Im \left(\omega^{k}\right)^n \right] \\
  \intertext{Using Theorem~\ref{thm:synch-numbers-permutations-finite-set} (ii) we get}
    &= -\frac{1}{L} \sum_{n=1}^{\lfloor L/2 \rfloor} \#S(\sigma_i^n,\sigma_j^n)
      \left[ \Im\left(\omega^k\right)^n - \Im\left(\omega^k\right)^n \right] \\
    &= 0
  \end{align*}
  therefore $C_k \in \R$ is in fact real.

  Because $z^L-1$ is a polynomial with integer coefficients, all its complex roots appear in pairs
  with their complex conjugates $\overline{\omega^k} = \omega^{-k} = \omega^{L-k}$.
  Grouping conjugate complex roots we get
  \begin{align*}
    \frac{-P(z)}{z^L-1} &= \begin{cases}
      \frac{C_0}{z-1} + \sum_{k=1}^{\lfloor L/2 \rfloor} \left( \frac{C_k}{z-\omega^k} + \frac{\overline{C_k}}{z-\overline{\omega^k}} \right), & 2 \!\not|\, L \\[2ex]
      \frac{C_0}{z-1} + \frac{C_{L/2}}{z+1} + \sum_{k=1}^{L/2-1} \left( \frac{C_k}{z-\omega^k} + \frac{\overline{C_k}}{z-\overline{\omega^k}} \right), & 2 \,|\, L
    \end{cases} \\[2ex]
    &= \begin{cases}
      \frac{C_0}{z-1} + \sum_{k=1}^{\lfloor L/2 \rfloor} \frac{2C_kz-2\Re(C_k\overline{\omega^k})}{z^2-2\Re(\omega^k)z+1}, & 2 \!\not|\, L \\[2ex]
      \frac{C_0}{z-1} + \frac{C_{L/2}}{z+1} + \sum_{k=1}^{L/2-1} \frac{2C_kz-2\Re(C_k\overline{\omega^k})}{z^2-2\Re(\omega^k)z+1}, & 2 \,|\, L
    \end{cases}
  \end{align*}
  Let us take a closer look at the terms under the summation signs.
  Since $C_k \in \R$ we have $\Re(C_k\overline{\omega^k})/C_k = \Re(\omega^k)$
  so
  \begin{align*}
    \frac{2C_kz-2\Re(C_k\overline{\omega^k})}{z^2-2\Re(\omega^k)z+1}
      &= C_k \cdot \frac{2z-2\Re(\omega^k)}{z^2 -2\Re(\omega^k)z +1} \\
    C_k \int \frac{\left( z^2 -2\Re(\omega^k)z +1 \right)'}{z^2-2\Re(\omega^k)z+1} dz
      &= C_k \cdot \ln \left( z^2-2\Re(\omega^k)z+1 \right)
  \end{align*}
  Integrating term by term we get
  \begin{align*}
    \int \frac{S_{\sigma_i, \sigma_j}'(z)}{S_{\sigma_i, \sigma_j}(z)}
      &= \int \frac{-P(z)}{z^L-1} \\
    &= \begin{cases}
      C_0\ln(z-1) + \sum_{k=1}^{\lfloor L/2 \rfloor} C_k \ln \left( z^2-2\Re(\omega^k)z+1 \right)  , & 2 \!\not|\, L \\[2ex]
       C_0\ln(z-1) + C_{L/2}\ln(z+1) + \sum_{k=1}^{L/2-1} C_k \ln \left( z^2-2\Re(\omega^k)z+1 \right) , & 2 \,|\, L
    \end{cases}
  \end{align*}
  Taking exponent and setting $A_k := C_k$ provides us with the result.
\end{proof}

\begin{col} \label{thm:synchronization-zeta-finite-bijections-col}
  If all $A_k$ in \eqref{eq:synchronization-permutations} are integers then the synchronization zeta function
  is rational, which is the case for example for \emph{commuting} permutations
  as proved in {\rm Theorem~\ref{thm:synchronization-commuting-permutations}}.
  If all $A_k$ are rational then the synchronization zeta function is algebraic,
  which was illustrated in {\rm Remark~\ref{thm:example-synchronization-finite-algebraic}}.

  In the most general situation $A_k$ is real. Even if the synchronization zeta function is not rational or algebraic,
  the equation \eqref{eq:synchronization-permutations} uses polynomials and real exponents and this
  assures that the set of singularities of the synchronization zeta function does not have accumulation points,
  hence, the synchronization zeta function itself does not have the natural boundary.
\end{col}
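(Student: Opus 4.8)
The plan is to extract all three assertions directly from the explicit product formula \eqref{eq:synchronization-permutations} of Theorem~\ref{thm:synchronization-zeta-form-bijections-finite}, which exhibits $S_{\sigma_1,\sigma_2}(z)$ as a \emph{finite} product of factors of the shape $Q(z)^{A}$, where each base $Q$ is one of the real polynomials $1-z$, $1+z$, $z^{2}-2\Re(\omega^{k})z+1$ --- all of whose roots lie on the unit circle, among the $L$-th roots of unity --- and each exponent $A$ is one of the real numbers $A_{0}, A_{L/2}, A_{1}, \dots$. First I would settle the integer case: if every $A_{k}\in\Z$, then $S_{\sigma_1,\sigma_2}(z)$ is a finite product of integer powers of polynomials, hence a rational function. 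For the assertion about commuting permutations one may proceed either way round: Theorem~\ref{thm:synchronization-commuting-permutations} already gives rationality, and since the factors occurring in \eqref{eq:synchronization-permutations} are pairwise coprime and $\R$-irreducible, uniqueness of factorization into $\R$-irreducibles forces the exponents to be integers; alternatively, starting from $S_{\sigma_1,\sigma_2}(z)=\prod_{[x]}(1-z^{\#[x]})^{-1}$ one factors each $1-z^{m}$ into cyclotomic polynomials $\Phi_{d}$ with $d\mid m$, and each $\Phi_{d}$ over $\R$ into the linear pieces $1\mp z$ (for $d=1,2$) and the quadratics $z^{2}-2\cos(2\pi j/d)z+1$, recovering \eqref{eq:synchronization-permutations} with manifestly integral exponents.

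Next I would treat the rational case: if each $A_{k}=p_{k}/q_{k}\in\Q$, set $N:=\LCM_{k} q_{k}$. Then $S_{\sigma_1,\sigma_2}(z)^{N}$ is a finite product of integer powers of polynomials, hence a rational function $R(z)\in\C(z)$, so $S_{\sigma_1,\sigma_2}$ satisfies $Y^{N}-R(z)=0$ over $\C(z)$ and is therefore algebraic of degree dividing $N$. The function $(1-z^{2})^{-3/2}$ of Remark~\ref{thm:example-synchronization-finite-algebraic}, with $N=2$, is the prototype.

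Finally there is the general real case, which carries the real content of the corollary. Here I would show that $S_{\sigma_1,\sigma_2}$ has only finitely many singular points, whence no curve can be a natural boundary. On any simply connected domain $U$ avoiding the (at most two) zeros of a given base $Q$, the factor $Q(z)^{A}$ equals $\exp\!\bigl(A\log Q(z)\bigr)$ for a holomorphic branch of $\log Q$ on $U$; hence it continues analytically --- as a possibly multivalued function --- along every path in $\C$ that avoids the zeros of $Q$, having at each such zero at worst an algebraic/logarithmic branch point, never an essential singularity. Multiplying the finitely many factors, $S_{\sigma_1,\sigma_2}(z)$ continues analytically along every path in $\C$ that avoids the finite set $Z\subseteq\{z:z^{L}=1\}$ of roots actually occurring in \eqref{eq:synchronization-permutations}; in particular it continues across every point of the unit circle lying outside $Z$, and, by circumnavigating the finitely many points of $Z$, across every circle $|z|=r$. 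Consequently no circle --- indeed no curve --- is a natural boundary for $S_{\sigma_1,\sigma_2}$, equivalently its set of singularities has no accumulation point. The one point requiring care --- the main obstacle --- is conceptual rather than computational: for the definition of a natural boundary only the \emph{existence} of an analytic continuation along \emph{some} path is relevant, so one must note explicitly that the branch points of $z^{A}$ with $A\in\R$ are isolated and can always be circumvented, the multivaluedness notwithstanding.
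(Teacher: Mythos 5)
Your proposal is correct and follows essentially the same route as the paper, which states this corollary without a separate proof and simply reads rationality, algebraicity, and the absence of a natural boundary directly off the finite product formula \eqref{eq:synchronization-permutations}. Your added details --- raising to the power $N=\LCM$ of the denominators to get $Y^{N}-R(z)=0$ in the rational case, and the explicit remark that the finitely many branch points (all among the $L$-th roots of unity) can be circumvented so that no circle can be a natural boundary --- only make explicit what the paper leaves as an assertion.
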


More generally, for maps of a finite sets that not necessarily are bijections,
we have similar yet more complicated results.

\begin{lem} \label{thm:synchronization-finite-arbitrary-lemma}
  Let $X$ be a finite set and $\sigma_1, \sigma_2: X \to X$ two arbitrary maps. Then
  \begin{enumerate}
  \item[\rm (i)] The sequence of synchronization numbers of $\sigma_1, \sigma_2$ is periodic
    from a certain place, i.e. there exist $n, L \in \N$ such that
    \begin{align} \label{eq:synchronization-sequence-finite-periodic}
      \# S(\sigma_1^{n+k+L}, \sigma_2^{n+k+L}) = \# S(\sigma_1^{n+k}, \sigma_2^{n+k}), \quad \text{ for all } k =0,1,2,\ldots;
    \end{align}
  \item[\rm (ii)] We have the equality
    \begin{align*}
      \sum_{k=1}^\infty \# S(\sigma_1^k, \sigma_2^k) z^{k-1} = \frac{P(z)}{1-z^L},
    \end{align*}
    where $P(z) \in \Z[Z]$ is a polynomial of $\deg P = n+L-2$ with integer coefficients.
  \end{enumerate}
\end{lem}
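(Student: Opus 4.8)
The plan is to reduce everything to the eventual periodicity of iterates in the finite transformation monoid $X^X$. Since $X$ is finite, so is $X^X$, and hence for each $i\in\{1,2\}$ the sequence $\sigma_i,\sigma_i^2,\sigma_i^3,\dots$ takes only finitely many values and is therefore eventually periodic: there are $n_i,L_i\in\N$ with $\sigma_i^{\,m+L_i}=\sigma_i^{\,m}$ for every $m\ge n_i$. I would then put $n:=\max(n_1,n_2)$ and $L:=\LCM(L_1,L_2)$; iterating the relation $\sigma_i^{\,m+L_i}=\sigma_i^{\,m}$ shows $\sigma_i^{\,m+L}=\sigma_i^{\,m}$ for all $m\ge n$ and both $i$. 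Consequently, for every $m\ge n$ the maps $\sigma_1^{\,m}$ and $\sigma_2^{\,m}$ agree on $X$ with $\sigma_1^{\,m+L}$ and $\sigma_2^{\,m+L}$ respectively, so that $S(\sigma_1^{\,m},\sigma_2^{\,m})=S(\sigma_1^{\,m+L},\sigma_2^{\,m+L})$ as subsets of $X$, and in particular the two cardinalities coincide. Writing $m=n+k$ with $k\ge 0$ gives exactly \eqref{eq:synchronization-sequence-finite-periodic}, which proves (i).

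For (ii), set $a_k:=\#S(\sigma_1^k,\sigma_2^k)$ and $F(z):=\sum_{k\ge1}a_k z^{k-1}$. By (i) we have $a_{m+L}=a_m$ for all $m\ge n$, so the natural step is to multiply $F$ by $1-z^L$ and telescope. Re-indexing $z^LF(z)$ gives
\[
  (1-z^L)F(z)=\sum_{k=1}^{L}a_k z^{k-1}+\sum_{k\ge L+1}(a_k-a_{k-L})z^{k-1},
\]
and since $a_k-a_{k-L}=0$ as soon as $k-L\ge n$, i.e. $k\ge n+L$, the tail collapses to a finite sum. Hence $(1-z^L)F(z)=P(z)$ with
\[
  P(z)=\sum_{k=1}^{L}a_k z^{k-1}+\sum_{k=L+1}^{n+L-1}(a_k-a_{k-L})z^{k-1}\in\Z[z],\qquad \deg P\le n+L-2,
\]
and choosing $n$ to be the minimal pre-period makes the coefficient $a_{n+L-1}-a_{n-1}$ of $z^{n+L-2}$ nonzero (outside the degenerate situation where all $a_k$ vanish), giving $\deg P=n+L-2$. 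Dividing by $1-z^L$ yields the asserted identity.

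I do not anticipate a real obstacle: the argument is essentially a repackaging of ``iteration of a self-map of a finite set is eventually periodic'' together with a routine generating-function manipulation. The only points that need a little care are the passage from the individual periods $L_1,L_2$ and pre-periods $n_1,n_2$ to the common $L=\LCM(L_1,L_2)$ and $n=\max(n_1,n_2)$, and the bookkeeping of summation ranges needed to pin down $\deg P$. It is worth contrasting this with Theorem~\ref{thm:synch-numbers-permutations-finite-set}: when $\sigma_1,\sigma_2$ are bijections the sequence is \emph{purely} periodic (effectively $n=1$), whereas for general self-maps only eventual periodicity is available, which is precisely why here a genuine denominator $1-z^L$ together with a non-trivial numerator $P(z)$ appear.
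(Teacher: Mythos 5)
Your proof is correct and takes essentially the same route as the paper: eventual periodicity of the iterates forced by finiteness (the paper argues pointwise over the orbit of each $x\in X$ and takes $\LCM$ of the individual periods, you argue via finiteness of $X^X$, which amounts to the same thing), followed by the identical telescoping computation with the factor $1-z^L$. Your additional remark pinning down $\deg P=n+L-2$ via minimality of the pre-period is in fact slightly more careful than the paper's proof, which only exhibits $\deg P\le n+L-2$.
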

\begin{proof}
  (i) Consider iterations $\sigma_1^k$ for $k=1,2,\ldots$ and let $x \in X$.
  For $X$ is finite, there is the first iteration $k_x$ such that $\sigma_1^{k_x}(x)$ is the same element
  as $\sigma_1^{k_x-l_x}(x)$ for some $l_x \in \N$. Denote $k_x-l_x=:n_x$.
  If we denote $n_1 := \max_x n_x, L_1 := \LCM\{l_x \;|\; x \in X\}$ then
  \begin{align*}
    \sigma_1^{n_1+k} = \sigma_1^{n_1+k+L_1}, \quad \text{ for all } k \in \N.
  \end{align*}
  In the same way we obtain $n_2, L_2$ for $\sigma_2$.
  We have \eqref{eq:synchronization-sequence-finite-periodic} for $n := \max(n_1, n_2), L:=\LCM(L_1, L_2)$.

  (ii) Direct computation gives us
  \begin{align*}
    (1-z^L) \cdot \sum_{k=1}^\infty \# S(\sigma_1^k, \sigma_2^k) z^{k-1}
      &= \sum_{k=1}^L \# S(\sigma_1^k, \sigma_2^k) z^{k-1}
      + \sum_{k=L+1}^\infty \left( \# S(\sigma_1^k, \sigma_2^k) - \# S(\sigma_1^{k-L}, \sigma_2^{k-L}) \right) z^{k-1}.
  \end{align*}
  Observe that the second sum is in fact finite. Indeed, if $n$ is as in (i)
  then we have \eqref{eq:synchronization-sequence-finite-periodic}
  and from $k=n+L$ all differences equal zero.
  If we denote the right side by $P(z)$ and divide both sides by $(1-z^L)$ then we get the result.
\end{proof}

\begin{thm}
  Let $X$ be a finite set and $\sigma_1, \sigma_2: X \to X$ two arbitrary maps.

  Denote by $L$ the length of the period of the sequence of synchronization numbers and denote
  \begin{align*}
    \omega:=\exp(2\pi \iota / L).
  \end{align*}
  Let $P(z) \in \Z[z]$ be the polynomial with integer coefficients such that
  \begin{align*}
    \left(1 - z^L \right) \cdot \sum_{n=1}^\infty \# S(\sigma_1^n, \sigma_2^n) z^{n-1} = P(z)
  \end{align*}
  as in {\rm Lemma~\ref{thm:synchronization-finite-arbitrary-lemma}(ii)}.
  Let $Q(z), R(z) \in \Z[z]$ be the polynomials fulfilling
  \begin{align*}
    P(z) = Q(z) \cdot (1-z^L) + R(z), \quad \deg R < L.
  \end{align*}

  Then the synchronization zeta function is of the form
  \begin{align*}
    S_{\sigma_1,\sigma_2}(z) = \exp \left( \int Q(z) dz \right) \cdot (1-z)^{A_0} \cdot T(z)
  \end{align*}
  where
  \begin{align*}
    T(z) = \begin{cases}
      \prod_{k=1}^{\lfloor L/2 \rfloor} \left( z^2 - 2\Re(\omega^k)z+1 \right)^{A_k}
      \cdot \left( \frac{-\Re(\omega^k) + \iota\sqrt{1-\Re(\omega^k)^2} \ + \ z}{\Re(\omega^k) + \iota\sqrt{1-\Re(\omega^k)^2} \ - \ z} \right)^{B_k \iota}, & 2 \!\not|\, L \\[1ex]
      (1+z)^{A_{L/2}} \cdot
      \prod_{k=1}^{L/2-1} \left( z^2 - 2\Re(\omega^k)z+1 \right)^{A_k}
      \cdot \left( \frac{-\Re(\omega^k) + \iota\sqrt{1-\Re(\omega^k)^2} \ +\ z}{\Re(\omega^k) + \iota\sqrt{1-\Re(\omega^k)^2} \ -\ z} \right)^{B_k \iota}, & 2 \,|\, L
     \end{cases}
  \end{align*}
  where $A_i, B_i \in \R$ and $\iota^2 = -1$.
\end{thm}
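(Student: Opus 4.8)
The plan is to follow the same route as the proof of Theorem~\ref{thm:synchronization-zeta-form-bijections-finite}: pass to the logarithmic derivative, which is already known to be rational, integrate it by partial fractions, and exponentiate. The one genuinely new feature is that, without commutativity, the residues of this rational function at the roots of unity need no longer be real, and this is exactly what produces the extra factors carrying the purely imaginary exponents $B_k\iota$.

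First I would record, exactly as in the first display of the proof of Theorem~\ref{thm:sychronization-zeta-log-derivative-rational}, that
\begin{align*}
  \frac{S_{\sigma_1,\sigma_2}'(z)}{S_{\sigma_1,\sigma_2}(z)} \;=\; \sum_{n=1}^{\infty}\#S(\sigma_1^{\,n},\sigma_2^{\,n})\,z^{n-1} \;=\; \frac{P(z)}{1-z^{L}},
\end{align*}
the last equality being Lemma~\ref{thm:synchronization-finite-arbitrary-lemma}(ii). The Euclidean division $P(z) = Q(z)(1-z^{L}) + R(z)$, with $Q,R\in\Z[z]$ and $\deg R<L$, turns this into $Q(z)+R(z)/(1-z^{L})$, whence
\begin{align*}
  S_{\sigma_1,\sigma_2}(z) \;=\; \exp\!\left(\int Q(z)\,dz\right)\cdot\exp\!\left(\int\frac{R(z)}{1-z^{L}}\,dz\right),
\end{align*}
the constant of integration being fixed by $S_{\sigma_1,\sigma_2}(0)=1$. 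The first factor already has the asserted shape, so all the work is in the second one.

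Next I would expand $R(z)/(1-z^{L})$ in partial fractions. Since $1-z^{L}=-\prod_{k=0}^{L-1}(z-\omega^{k})$ has $L$ simple roots and $\deg R<L$, there are constants $C_{k}\in\C$ with $R(z)/(1-z^{L})=\sum_{k=0}^{L-1}C_{k}/(z-\omega^{k})$, and evaluating at $z=\omega^{k}$ gives $C_{k}=-R(\omega^{k})/(L\omega^{k(L-1)})$. Because $R$ has real coefficients, $C_{L-k}=\overline{C_{k}}$; in particular $C_{0}$ is real and, when $2\mid L$, so is $C_{L/2}$. This is where the argument parts company with that of Theorem~\ref{thm:synchronization-zeta-form-bijections-finite}: there the palindromic identity of Theorem~\ref{thm:synch-numbers-permutations-finite-set}(ii) forced every $C_{k}$ to be real, whereas here the $C_{k}$ are genuinely complex, which is the source of the extra factors in $T(z)$.

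Integrating term by term, grouping each conjugate pair $\{k,L-k\}$ for $1\le k<L/2$ and treating the real roots $1$ and (for $L$ even) $-1$ separately, I would obtain after exponentiation the factors $(z-1)^{C_{0}}$, $(z+1)^{C_{L/2}}$ and, for each such $k$,
\begin{align*}
  \bigl(z^{2}-2\Re(\omega^{k})z+1\bigr)^{\Re(C_{k})}\cdot\left(\frac{z-\omega^{k}}{z-\overline{\omega^{k}}}\right)^{\iota\,\Im(C_{k})}.
\end{align*}
Setting $A_{0}:=C_{0}$, $A_{k}:=\Re(C_{k})$, $A_{L/2}:=C_{L/2}$ and $B_{k}:=-\Im(C_{k})$, all real, and using that $\sqrt{1-\Re(\omega^{k})^{2}}=\Im(\omega^{k})>0$ for $1\le k<L/2$, so that $-\Re(\omega^{k})+\iota\sqrt{1-\Re(\omega^{k})^{2}}=-\overline{\omega^{k}}$ and $\Re(\omega^{k})+\iota\sqrt{1-\Re(\omega^{k})^{2}}=\omega^{k}$, one matches these factors with those in $T(z)$ up to unimodular constants, which --- together with the sign relating $(z-1)^{C_0}$ to $(1-z)^{A_0}$ --- are absorbed into the constant of integration so that $S_{\sigma_1,\sigma_2}(0)=1$. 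I expect the main obstacle to be exactly this last piece of bookkeeping: choosing consistent branches of $\ln$, checking that each exponent $B_{k}\iota$ comes out purely imaginary with $B_{k}\in\R$, and pinning down the overall constant. There is no substantive difficulty beyond it, since Lemma~\ref{thm:synchronization-finite-arbitrary-lemma} already delivers rationality of the logarithmic derivative and the remainder repeats the computation of Theorem~\ref{thm:synchronization-zeta-form-bijections-finite}.
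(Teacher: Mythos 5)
Your proposal is correct and follows essentially the same route as the paper: logarithmic derivative equals $P(z)/(1-z^L)$, Euclidean division to split off $\exp\bigl(\int Q\bigr)$, partial fractions at the $L$-th roots of unity with $C_k=-R(\omega^k)/(L\omega^{k(L-1)})$, integration and exponentiation, with $A_k=\Re(C_k)$ and $B_k=-\Im(C_k)$ real (one checks the paper's formula for $B_k$ reduces to exactly $-\Im(C_k)$). The only difference is cosmetic: you integrate each conjugate pair directly as complex logarithms, while the paper first forms the real quadratic denominator and passes through $\arctan$ and the identity $\arctan(z)=\tfrac{\iota}{2}\ln\frac{\iota+z}{\iota-z}$; both yield the same factors up to multiplicative constants fixed by $S_{\sigma_1,\sigma_2}(0)=1$.
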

\begin{proof}
  We are going to follow the same path as in Theorem~\ref{thm:synchronization-zeta-form-bijections-finite}.
  Two important differences are that we do not have the property of Theorem~\ref{thm:synch-numbers-permutations-finite-set} (ii)
  and $\deg P$ can be larger than the length $L$ of the period in the sequence of synchronization numbers.

  Integrating term by term we get
  \begin{align*}
  \begin{split}
    S_{\sigma_1, \sigma_2}(z) &= \exp \left( \sum_{n=1}^\infty \frac{\# S(\sigma_1^n, \sigma_2^n)}{n} z^n \right) \\
    &= \exp \left( \int \sum_{n=1}^\infty \# S(\sigma_1^n, \sigma_2^n) z^{n-1} dz \right) \\
    &= \exp \left( \int \frac{P(z)}{1-z^L} dz \right) \\
    &= \exp \left( \int Q(z) dz \right) \cdot \exp \left( \int \frac{R(z)}{1-z^L} dz \right).
  \end{split}
  \end{align*}
  Because $\deg R < L$ we can perform partial fraction decomposition as in the proof
  of Theorem~\ref{thm:synchronization-zeta-form-bijections-finite}:
  \begin{align*}
    \frac{-R(z)}{z^L-1} = \sum_{k=0}^{L-1} \frac{C_k}{z-\omega^k}, \quad \text{ where } C_k = \frac{-R(\omega^k)}{L\omega^{k(L-1)}} \in \C.
  \end{align*}
  In the sum, roots of unity appear in conjugate pairs, excluding $-1$ for $L$ even and $1$ regardless of parity of $L$.
  Grouping the complex conjugate roots we get (for $L$ odd)
  \begin{align*}
  \begin{split}
    \sum_{k=1}^{L-1} \frac{C_k}{z-\omega^k} &= \sum_{k=1}^{\lfloor L/2 \rfloor}
      \frac{C_k(z-\overline{\omega^k})+\overline{C_k}(z-\omega^k)}{z^2 - 2\Re(\omega^k)z +1} \\
    &= \sum_{k=1}^{\lfloor L/2 \rfloor} \frac{2\Re(C_k)z - 2\Re(C_k\overline{\omega^k})}{z^2 - 2\Re(\omega^k)z +1} \\
    &= \sum_{k=1}^{\lfloor L/2 \rfloor} \Re(C_k) \frac{2z - 2\Re(C_k\overline{\omega^k})/\Re(C_k)}{z^2 - 2\Re(\omega^k)z +1}.
  \end{split}
  \end{align*}
  For fixed $k$ we can write
  \begin{align*}
  \begin{split}
    \frac{2z - 2\Re(C_k\overline{\omega^k})/\Re(C_k)}{z^2 - 2\Re(\omega^k)z +1} &=
      \frac{2z -2\Re(\omega^k) +2\Re(\omega^k) -2\Re(C_k\overline{\omega^k})/\Re(C_k)}{z^2 - 2\Re(\omega^k)z +1} \\
    &= \frac{2z -2\Re(\omega^k)}{z^2 - 2\Re(\omega^k)z +1}
      + \left( 2\Re(\omega^k) -2\Re(C_k\overline{\omega^k})/\Re(C_k) \right)\frac{1}{z^2 - 2\Re(\omega^k)z +1}.
  \end{split}
  \end{align*}
  In the first term the numerator is the derivative of the denumerator, therefore after integration
  it will give the logarithm
  \begin{align*}
    \int \frac{2z -2\Re(\omega^k)}{z^2 - 2\Re(\omega^k)z +1} dz = \ln \left( z^2 - 2\Re(\omega^k)z +1 \right)
  \end{align*}
  which will be canceled out after exponentiation.
  The second term, if $\Im C_k \neq 0$, after integration gives rise to
  \begin{align*}
    \int \frac{1}{z^2-2\Re(\omega^k)z+1} dz =
      \frac{1}{\sqrt{1-\Re(\omega^k)^2}} \arctan \left( \frac{z-\Re(\omega^k)}{\sqrt{1-\Re(\omega^k)^2}} \right)
  \end{align*}
  which, after using the known identity
  \begin{align*}
    \arctan(z) = \frac{\iota}{2} \ln \left(\frac{\iota + z}{\iota - z} \right),
  \end{align*}
  gives us
  \begin{align*}
    \int \frac{1}{z^2-2\Re(\omega^k)z+1} dz =
      \frac{\iota}{2\sqrt{1-\Re(\omega^k)^2}} \ln \left( \frac{-\Re(\omega^k) + \iota\sqrt{1-\Re(\omega^k)^2} + z}{\Re(\omega^k) + \iota\sqrt{1-\Re(\omega^k)^2} - z} \right).
  \end{align*}

  Hence, for a fixed $k$, we have
  \begin{align*}
  \begin{split}
    \exp \left( \int \frac{C_k}{z-\omega^k} dz \right)
      = \exp &\left[ \Re(C_k) \ln \left( z^2 - 2\Re(\omega^k)z +1 \right) \right] \\
    &\cdot \exp \left[ \frac{\Re(C_k)\Re(\omega^k) - \Re(C_k\overline{\omega^k})}{\sqrt{1-\Re(\omega^k)^2}} \cdot \iota \cdot \ln \left( \frac{-\Re(\omega^k) + \iota\sqrt{1-\Re(\omega^k)^2} + z}{\Re(\omega^k) + \iota\sqrt{1-\Re(\omega^k)^2} - z} \right) \right].
  \end{split}
  \end{align*}
  Summing over $k$ and setting
  \begin{align*}
    A_k := \Re(C_k), \quad B_k := \frac{\Re(C_k)\Re(\omega^k) - \Re(C_k\overline{\omega^k})}{\sqrt{1-\Re(\omega^k)^2}}
  \end{align*}
  finishes the proof.
\end{proof}

\begin{col}
  In analogy to the {\rm Corollary~\ref{thm:synchronization-zeta-finite-bijections-col}} we observe
  that the synchronization zeta function of two arbitrary maps of a finite set does not have a natural boundary.
  That is because all terms can be written as real powers of polynomials and complex powers of rational functions,
  hence, the set of singularities does not have accumulation points.

  Observe that if all $B_k = 0$, i.e. all $\C_k \in \R$, then this general form collapses
  to the form of {\rm Theorem~\ref{thm:synchronization-zeta-form-bijections-finite}}
  with an additional exponent of an integral of a polynomial term.
\end{col}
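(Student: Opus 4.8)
The plan is to read the claim off directly from the explicit product formula for $S_{\sigma_1,\sigma_2}(z)$ established in the preceding theorem, by checking that each of its factors is analytic outside a finite subset of $\C$.

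First, $\exp\!\big(\int Q(z)\,dz\big)$ is the exponential of a polynomial, hence entire, with no singularities at all. The factor $(1-z)^{A_0}$ is analytic on $\C\setminus\{1\}$, and $(1+z)^{A_{L/2}}$ (when $L$ is even) is analytic on $\C\setminus\{-1\}$; each has at worst a branch point or a pole at the indicated point. Each factor $\big(z^2-2\Re(\omega^k)z+1\big)^{A_k}$ is a real power of a quadratic whose two roots are the unit-modulus numbers $\omega^{\pm k}$, so it is analytic on $\C$ away from those two points. Finally, each factor $\left(\frac{-\Re(\omega^k)+\iota\sqrt{1-\Re(\omega^k)^2}+z}{\Re(\omega^k)+\iota\sqrt{1-\Re(\omega^k)^2}-z}\right)^{B_k\iota}$ is a fixed rational function of $z$ raised to a constant exponent, hence analytic wherever that rational function is finite and nonzero, i.e.\ off its single zero and single pole. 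Since $L$ is fixed there are only finitely many indices $k$, so the union $\Sigma\subset\C$ of all these exceptional sets is finite.

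Consequently $S_{\sigma_1,\sigma_2}(z)$ continues to an analytic (possibly multivalued) function on $\C\setminus\Sigma$ with $\Sigma$ finite; in particular its singular set has no accumulation point. A function possessing a natural boundary along a curve $\partial D$ must, by definition, fail to continue past \emph{every} point of $\partial D$, and therefore must have singularities accumulating on $\partial D$. Hence $S_{\sigma_1,\sigma_2}(z)$ cannot have a natural boundary, exactly as in Corollary~\ref{thm:synchronization-zeta-finite-bijections-col}. For the closing remark one simply specialises the formula: when every $B_k=0$, equivalently every $C_k$ is real, each rational-function factor is identically $1$, so $T(z)$ collapses to $(1+z)^{A_{L/2}}\prod_k\big(z^2-2\Re(\omega^k)z+1\big)^{A_k}$ (with the obvious modification according to the parity of $L$), and then $S_{\sigma_1,\sigma_2}(z)=\exp\!\big(\int Q(z)\,dz\big)\cdot(1-z)^{A_0}\cdot T(z)$ is precisely the shape of Theorem~\ref{thm:synchronization-zeta-form-bijections-finite} multiplied by the extra factor $\exp\!\big(\int Q(z)\,dz\big)$; for permutations $\deg P<L$ forces $Q\equiv 0$, so even this extra factor disappears.

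The one point needing care — and the step I would flag as the main, if minor, obstacle — is the precise reading of ``no natural boundary'' in the presence of branch points: a branch point is an isolated singularity around which continuation along paths is still possible, so finitely many branch points can never block continuation to an entire boundary arc. The substance of the argument is thus merely that the singular set $\Sigma$ is finite, hence discrete, hence cannot cover any curve.
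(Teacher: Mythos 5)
Your proposal is correct and follows essentially the same route as the paper: the corollary is justified there precisely by reading off the explicit formula of the preceding theorem, noting that every factor is an entire exponential of a polynomial, a real power of a polynomial, or a constant-exponent power of a rational function, so the singular set is finite and hence cannot supply the accumulating singularities a natural boundary requires, and that setting all $B_k=0$ kills the rational-function factors and leaves the bijection form times $\exp\bigl(\int Q(z)\,dz\bigr)$. Your factor-by-factor verification (including the remark that branch points are isolated and do not obstruct continuation, and that $Q\equiv 0$ in the permutation case) is just a more detailed write-up of the same observation.
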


%===================================================================================================
%===========================  Section  =============================================================
%===================================================================================================
\vspace{1cm}
\section{Gauss congruences for numbers of synchronization points} \label{sec:congruences}

Let $\mu(n)$, $n\in\N$, be the {\em M\"obius function}, i.e.
\begin{align*}
  \mu(n) = \left\{
  \begin{array}{ll}
    1, & \mbox{ if } n=1,  \\
    (-1)^k, & \mbox{ if }  n  \mbox{ is a product of } k \mbox{ distinct primes,}\\
    0, & \mbox{ if }  n  \mbox{ is not square-free.}
  \end{array}
  \right.
\end{align*}
In number theory, the following Gauss congruence for integers holds:
\begin{align*}
  \sum_{d\mid n}\mu(n/d)\cdot a^{d}\equiv 0\mod n
\end{align*}
for any integer $a$ and any natural number $n$. In the case of a prime power $n=p^r$, the Gauss congruence turns into the Euler congruence. Indeed, for $n=p^r$ the M\"obius function $\mu(n/d)=\mu(p^r/d)$ is nonzero only in two cases: when $d=p^r$ and when $d=p^{r-1}$. Therefore, from the Gauss congruence we obtain the Euler congruence
\begin{align*}
  a^{p^r}\equiv a^{p^{r-1}}\mod p^r.
\end{align*}
When $(a,n)=1, n=p^r$, these congruences are equivalent to the following classical Euler's theorem:
\begin{align*}
  a^{\varphi(n)}\equiv 1\mod n.
\end{align*}
These congruences have been generalized from integers to some other mathematical invariants such as the traces of powers of all integer matrices $A$ and the Lefschetz numbers of iterations  of a map {see \cite{mp99,Z}}:
\begin{align} \label{Gauss}
&\sum_{d\mid n}\mu(n/d)\cdot \tr(A^{d})\equiv 0\mod n,\\
\label{Euler}
&\tr(A^{p^r})\equiv\tr(A^{p^{r-1}})\mod p^r.
\end{align}
A. Dold in \cite{Dold83} (see also \cite[Theorem 3.1.4]{JezMar}) proved by a geometric argument
the following congruences \eqref{Dold} for the Lefschetz numbers of iterations of a map $f$
on a compact ANR $X$ and any natural number $n$

\begin{align}
\label{Dold}
\sum_{d\mid n}\mu(n/d)\cdot L(f^{d})\equiv 0\mod n .\tag{DL}
\end{align}
These congruences are now called the Dold congruences.
It is also shown in \cite{mp99} (see also \cite[Theorem~9]{Z}) that the above congruences \eqref{Gauss},
\eqref{Euler} and \eqref{Dold} are equivalent.

To prove Gauss congruences in case of rationality of the synchronization zeta function
we follow \cite[Theorem~2.1]{BaBo} and \cite[Theorem 3.1.23, Proposition~3.1.12]{JezMar}.

\begin{lem}\label{thm:rational-zeta-coefficients-sum}
  $S_{\varphi, \psi}(z)$ is rational iff for every $k \in \N$ we have
  \begin{equation}\label{eq:coincidence-sum}
    \# S(\varphi^k, \psi^k) = \sum_{i=1}^{r} \chi_i \lambda_i^k
  \end{equation}
  where $r \in \N, \ \chi_i \in \Z$ and $\lambda_i \in \C$ are distinct algebraic integers.
\end{lem}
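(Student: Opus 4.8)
The plan is to prove the two implications separately, the forward direction (rationality $\Rightarrow$ the stated form) being the substantive one. For the easy direction, suppose $\#S(\varphi^k,\psi^k)=\sum_{i=1}^{r}\chi_i\lambda_i^{k}$ for all $k$, with $\chi_i\in\Z$ and the $\lambda_i\in\C$ distinct. I would substitute this into the definition of $S_{\varphi,\psi}$, interchange the two summations (a legitimate formal power series manipulation), and use the identity $\sum_{k\ge1}\frac{(\lambda z)^{k}}{k}=-\log(1-\lambda z)$ to get
\begin{equation*}
  S_{\varphi,\psi}(z)=\exp\Bigl(-\sum_{i=1}^{r}\chi_i\log(1-\lambda_i z)\Bigr)=\prod_{i=1}^{r}(1-\lambda_i z)^{-\chi_i},
\end{equation*}
which is a rational function because each $\chi_i$ is an integer (indices with $\lambda_i=0$ may be discarded, contributing the factor $1$).

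For the converse, assume $S_{\varphi,\psi}(z)$ is rational, and set $F(z):=z\,S_{\varphi,\psi}'(z)/S_{\varphi,\psi}(z)=\sum_{k\ge1}\#S(\varphi^{k},\psi^{k})z^{k}$. Then $F$ is rational, being obtained from $S_{\varphi,\psi}$ by differentiation and division, and it has integer Taylor coefficients since each $\#S(\varphi^{k},\psi^{k})$ is the cardinality of a finite set. Writing $S_{\varphi,\psi}=P/Q$ in lowest terms with $P,Q\in\C[z]$ normalised so that $P(0)=Q(0)=1$ (possible since $S_{\varphi,\psi}(0)=1$), factoring $Q(z)=\prod_i(1-\lambda_i z)^{d_i}$ and $P(z)=\prod_j(1-\mu_j z)^{e_j}$, and expanding the logarithmic derivative termwise via geometric series, I obtain
\begin{equation*}
  \#S(\varphi^{k},\psi^{k})=\sum_i d_i\lambda_i^{k}-\sum_j e_j\mu_j^{k}\qquad(k\ge1).
\end{equation*}
Collecting terms with equal base yields $\#S(\varphi^{k},\psi^{k})=\sum_{\ell}\chi_\ell\nu_\ell^{k}$ with the $\nu_\ell$ pairwise distinct, nonzero, and $\chi_\ell\in\Z\setminus\{0\}$; this is the required shape apart from the assertion that the $\nu_\ell$ are algebraic \emph{integers}.

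Establishing integrality is the main obstacle, and I would settle it with the classical theorem of Fatou on integral power series: a power series with integer coefficients representing a rational function can be written as $A(z)/B(z)$ with $A,B\in\Z[z]$ and $B(0)=1$. Applied to $F(z)$, whose singularities are exactly the simple poles at the points $\nu_\ell^{-1}$ (simple because the corresponding $\chi_\ell$ are nonzero and the $\nu_\ell$ are distinct, so no residues cancel), this forces $B(z)=\prod_\ell(1-\nu_\ell z)\in\Z[z]$; hence its reciprocal polynomial $z^{\deg B}B(1/z)=\prod_\ell(z-\nu_\ell)$ is monic with integer coefficients, so every $\nu_\ell$ is an algebraic integer. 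Relabelling the pairs $(\chi_\ell,\nu_\ell)$ as $(\chi_i,\lambda_i)$ completes the proof. The only delicate bookkeeping is to verify that the cancellations among the $\lambda_i$ and $\mu_j$ leave genuinely distinct bases carrying nonzero integer coefficients, so that no spurious pole survives to falsify the identification of $B$.
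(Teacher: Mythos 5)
Your proposal is correct and follows essentially the same route as the paper: the easy direction by exponentiating the series term by term into $\prod_i(1-\lambda_i z)^{-\chi_i}$, and the converse by factoring the rational zeta function into linear factors, reading off $\#S(\varphi^k,\psi^k)$ from the logarithmic derivative, and invoking Fatou's lemma so that the $\lambda_i$ are roots of a monic integer (reversed denominator) polynomial, hence algebraic integers. The only differences are cosmetic (you work with $zS'/S$ and keep numerator and denominator roots separate before collecting, while the paper merges them into one signed list first).
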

\begin{proof}
  $\Leftarrow$)
  Let us split all the $\chi_i$ into two sets according to their signs.
  We have $r^+$ positive $\chi_i^+ = \chi_i$ and $r^-$ positive $\chi_i^- = -\chi_i$ where $r^+ + r^- = r$.
  Denote $\lambda_i$ as $\lambda_i^+$ or $\lambda_i^-$ with respect to the sign of corresponding $\chi_i$.
  With this notation we have
  \[
    \sum_{i=1}^{r} \chi_i \lambda_i^k \ = \ \sum_{i=1}^{r^+} \chi_i^+ {\lambda_i^+}^k \ - \ \sum_{i=1}^{r^-} \chi_i^- {\lambda_i^-}^k.
  \]
  Using this equality in (\ref{eq:coincidence-sum}), properties of $\exp$
  and the identity $\ln(1/(1-t)) = \sum_{n=1}^\infty t^n/n$ we get
  \[
    S_{\varphi, \psi}(z) =
    \frac{\prod_{i=1}^{r^+} \left( \exp \left( \sum_{k=1}^\infty \frac{(\lambda_i^+ z)^k}{k} \right) \right)^{\chi_i^+}} {\prod_{i=1}^{r^-} \left( \exp \left( \sum_{k=1}^\infty \frac{(\lambda_i^- z)^k}{k} \right) \right)^{\chi_i^-}} =
    \frac{\prod_{i=1}^{r^-} \left( 1-\lambda_i^- z \right)^{\chi_i^-}} {\prod_{i=1}^{r^+} \left( 1-\lambda_i^+ z \right)^{\chi_i^+}}
  \]
  and the zeta function is rational.

  $\Rightarrow$)
  Assume $S_{\varphi, \psi}(z)$ is rational.
  By definition $S_{\varphi, \psi}(0) = 1 \neq 0$ therefore we can write
  \[
    S_{\varphi, \psi}(z) = \frac{\prod_i (1 - \beta_i z)}{\prod_j (1 - \gamma_j z)}
  \]
  where $\beta_i, \gamma_j$ are non-zero roots of polynomials and the products are finite.
  Combine the roots into one sequence $\{\lambda_i\}$ where for $i \neq j$ there is $\lambda_i \neq \lambda_j$.
  Taking multiplication into account we can write
  \begin{equation}\label{eq:zeta-product}
    S_{\varphi, \psi}(z) = \prod_i \left( 1 - \lambda_i z \right)^{\chi_i}, \quad \chi_i \in \Z.
  \end{equation}

  Let us consider the logarithmic derivative
  \begin{equation}\label{eq:S-def}
    S(z) := \frac{d}{dz} \ln S_{\varphi, \psi}(z) = \sum_{k=1}^\infty \# S(\varphi^k, \psi^k) \cdot z^{k-1}.
  \end{equation}
  From (\ref{eq:zeta-product}) we have
  \begin{align}\label{eq:S=rational}
  \begin{split}
    S(z) &= \frac{d}{dz} \left( \sum_i \chi_i \ln(1-\lambda_i z) \right)
    \ = \ \sum_i \chi_i \frac{-\lambda_i}{1-\lambda_i z} \\
    &= \sum_i (-\chi_i \lambda_i) \sum_{k=0}^\infty \lambda_i^k z^k
    \ = \ \sum_{k=1}^\infty \left( \sum_i -\chi_i \lambda_i^k \right) z^{k-1}.
  \end{split}
  \end{align}
  Comparing coefficients at $z^{k-1}$ we get $\# S(\varphi^k, \psi^k) = \sum_i \chi_i \lambda_i^k$
  up to the sign of $\chi_i$,
  the~sum is finite, $\chi_i \in \Z$ and $\lambda_i$ are algebraic numbers.

  We are going to prove that $\lambda_i$ are actually algebraic integers.
  From (\ref{eq:S-def}) we know that $S(z)$ is given by a power series with integer coefficients
  and from (\ref{eq:S=rational}) that it is a rational function.
  Therefore $S(z) = u(z) / v(z)$, where $u, v \in \Q[z]$.
  Fatou lemma (see \cite[Lemma~3.1.31]{JezMar}) says that $u(z), v(z)$ in this setting have the forms
  \[
    u(z) = \sum_{i=0}^s a_i z^i, \quad v(z) = 1 + \sum_{j=1}^q b_j z^j
  \]
  where the coefficients $a_i, b_j \in \Z$.
  Comparing this with (\ref{eq:S=rational}) we get
  \[
    \frac{\sum_{i=0}^s a_i z^i}{1 + \sum_{j=1}^q b_j z^j} \ = \ S(z) \
    = \ \sum_i \frac{-\chi_i\lambda_i}{1-\lambda_i z}
  \]
  and it is easy to check that all $\lambda_i$ are roots of the polynomial
  $\tilde{v}(z) = z^q + \sum_{j=1}^q b_j z^{q-j}$.
  Hence they are algebraic integers.
\end{proof}

\begin{lem}\label{thm:coefficients-sum-map-bouquet}
  There exists a map $f: X \to X$ of a compact euclidean neighborhood retract such that
  $L(f^k) = \# S(\varphi^k, \psi^k)$ for every $k \in \N$ iff
  the equality \eqref{eq:coincidence-sum} holds for every $k \in \N$
\end{lem}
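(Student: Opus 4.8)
The plan is to prove the two implications of the equivalence separately: the direction ``such an $f$ exists $\Rightarrow$ \eqref{eq:coincidence-sum}'' is the Hopf trace formula, while ``\eqref{eq:coincidence-sum} $\Rightarrow$ such an $f$ exists'' is a construction of a self-map of a finite wedge of spheres whose only genuinely arithmetic input is that $\# S(\varphi^k,\psi^k)$ is an \emph{integer}. For necessity, I would start from the fact that a compact ENR is dominated by a finite CW complex, so its rational homology is finite-dimensional and vanishes above some degree and the Hopf trace formula applies: $L(f^k)=\sum_{j\ge 0}(-1)^j\tr\big((f^k)_*\colon H_j(X;\Q)\to H_j(X;\Q)\big)$. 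Representing $f_*$ on the free part $H_j(X;\Z)/\mathrm{tors}$ by an integer matrix $A_j$, the $j$-th rational trace equals $\tr(A_j^k)=\sum_\mu\mu^k$ over the eigenvalues $\mu$ of $A_j$ counted with multiplicity, and these eigenvalues are algebraic integers since they are roots of the monic polynomial $\det(XI-A_j)\in\Z[X]$. Collecting all the eigenvalues occurring for all $j$ according to their value, and letting $\chi_i\in\Z$ be the alternating count of the degrees in which a value $\lambda_i$ occurs, I obtain $\# S(\varphi^k,\psi^k)=L(f^k)=\sum_i\chi_i\lambda_i^k$ with the $\lambda_i$ distinct algebraic integers; after discarding terms with $\chi_i=0$ (or keeping one term $\chi_1=0$, $\lambda_1=1$ if all vanish) this is \eqref{eq:coincidence-sum}.

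For sufficiency, write $a_k=\# S(\varphi^k,\psi^k)=\sum_{i=1}^r\chi_i\lambda_i^k$ and discard terms with $\chi_i=0$ or $\lambda_i=0$. The first step is to exploit integrality of the $a_k$. The generating function $\sum_{k\ge 1}a_kz^k=\sum_i\chi_i\lambda_iz/(1-\lambda_iz)$ is a rational function whose poles are the distinct simple poles $\lambda_i^{-1}$, with residue $-\chi_i/\lambda_i$ at $\lambda_i^{-1}$, so the expansion $a_k=\sum_i\chi_i\lambda_i^k$ is the unique one of its form; applying any $\sigma\in\operatorname{Gal}(\overline{\Q}/\Q)$ (which fixes the $a_k$ and the $\chi_i\in\Z$) and invoking uniqueness shows that $\sigma$ merely permutes the pairs $(\lambda_i,\chi_i)$. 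Hence these pairs split into complete Galois orbits $O_1,\dots,O_s$, each orbit $O_t$ carrying a single coefficient $c_t\in\Z\setminus\{0\}$, and $a_k=\sum_{t=1}^s c_t\sum_{\lambda\in O_t}\lambda^k$. Since $O_t$ is a full Galois orbit, $p_t(X):=\prod_{\lambda\in O_t}(X-\lambda)$ is monic with integer coefficients, so its companion matrix $C_t$ is an integer matrix with $\tr(C_t^k)=\sum_{\lambda\in O_t}\lambda^k$, and $a_k=\sum_t c_t\tr(C_t^k)$.

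The second step realizes this integer sequence topologically. Let $A_2$ be the block-diagonal integer matrix built from $c_t$ copies of $C_t$ over those $t$ with $c_t>0$, and $A_3$ the block-diagonal integer matrix built from $|c_t|$ copies of $C_t$ over those $t$ with $c_t<0$, together with one extra $1\times1$ block $[1]$; then $\tr(A_2^k)-\tr(A_3^k)=a_k-1$. Put $n_2=\dim A_2$, $n_3=\dim A_3$ and take $X=\big(\bigvee^{n_2}S^2\big)\vee\big(\bigvee^{n_3}S^3\big)$, a finite wedge of spheres and hence a compact ENR. Using the universal property of the wedge together with the Hurewicz isomorphisms $\pi_2\big(\bigvee^{n_2}S^2\big)\cong\Z^{n_2}$ and $\pi_3\big(\bigvee^{n_3}S^3\big)\cong\Z^{n_3}$, I would define $f\colon X\to X$ by sending the $i$-th $2$-sphere into the subwedge $\bigvee^{n_2}S^2$ by a map whose class in $H_2$ is the $i$-th column of $A_2$, and the $i$-th $3$-sphere into $\bigvee^{n_3}S^3$ by a map whose class in $H_3$ is the $i$-th column of $A_3$. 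Then $f_*$ is the identity on $H_0(X)=\Z$, equals $A_2$ on $H_2(X)$, equals $A_3$ on $H_3(X)$, and the remaining homology vanishes; by functoriality $(f^k)_*$ is the corresponding power, so $L(f^k)=1+\tr(A_2^k)-\tr(A_3^k)=a_k=\# S(\varphi^k,\psi^k)$, as required.

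The step I expect to be the real obstacle is the passage, inside the sufficiency direction, from the arithmetic identity \eqref{eq:coincidence-sum} to honest integer matrices: an irrational algebraic integer cannot be an eigenvalue of an integer matrix in isolation, so one must first use the integrality of $\# S(\varphi^k,\psi^k)$ to force the $\lambda_i$ to occur in full Galois orbits with constant coefficients, after which companion matrices finish the job. The only other point requiring attention is the unavoidable contribution $+1$ to $L(f^k)$ from $H_0$ of a connected polyhedron, which is absorbed by placing the extra $[1]$-block into an odd-degree homology group. This argument runs parallel to \cite[Theorem~2.1]{BaBo} and \cite[Theorem~3.1.23, Proposition~3.1.12]{JezMar}.
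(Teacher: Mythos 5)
Your argument is correct, and its arithmetic core coincides with the paper's: in the necessity direction both use the Hopf trace formula and integrality of homology to exhibit \eqref{eq:coincidence-sum}, and in the sufficiency direction both force the $\lambda_i$ with nonzero coefficients into full Galois orbits with constant weights and then realize each orbit by a companion matrix (you argue via uniqueness of the exponential-sum representation of the generating function, the paper via a Vandermonde-matrix comparison; these are equivalent, and your phrasing has the small merit of making explicit that the set $\{\lambda_i\}$ is Galois-stable, which the paper leaves implicit when it factors $\prod_i(\lambda_i-z)$ into irreducibles over $\Q$). Where you genuinely diverge is the topological realization step. The paper delegates it to Lemma~\ref{thm:integer-matrix-map-bouquet} (Babenko--Bogatyi, Jezierski--Marzantowicz), which realizes $\tr A_e^k-\tr A_o^k$ by a self-map of a bouquet of $n_1$ circles and $n_2$ two-spheres, with the $H_0$ contribution absorbed into the sizing $n_1-1$; that result is nontrivial precisely because prescribing the action on $H_1$ of a wedge of circles requires working with a free nonabelian $\pi_1$. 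You instead build the map by hand on a wedge of $2$- and $3$-spheres, where Hurewicz gives $\pi_n\bigl(\bigvee S^n\bigr)\cong\Z^{n}$-fold sums for $n=2,3$, so any integer matrix is realized summand by summand, and you absorb the inevitable $+1$ from $H_0$ by an extra $[1]$ block in odd degree. This makes the realization step elementary and self-contained (a finite wedge of spheres is a compact ENR, so the statement is unaffected), at the cost of losing the direct tie to the cited literature; both routes are valid, and yours arguably removes the only external ingredient in the paper's sufficiency argument.
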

\begin{proof}
  $\Rightarrow$)
  Denote by $H_i(f): H_i(X;\Q) \to H_i(X;\Q)$ the map induced on $i$-th homology space and put
  \[
    A_e := \bigoplus_{i-even} H_i(f), \quad A_o := \bigoplus_{i-odd} H_i(f).
  \]
  With this notation we can write
  \[
    \# S(\varphi^k, \psi^k) = L(f^k) = \tr A_e^k - \tr A_o^k.
  \]
  Let $\lambda_i^+, \lambda_j^-$ be all distinct eigenvalues of $A_e, A_o$ respectively,
  each of multiplicity $\chi_i^+, \chi_j^-$ respectively.
  Then
  \[
    \# S(\varphi^k, \psi^k) \ = \ \sum_{i} \chi_i^+ {\lambda_i^+}^k \ - \ \sum_j \chi_j^- {\lambda_j^-}^k,
  \]
  where $\chi_i^+, \chi_j^- \in \Z$ and all $\lambda_i^+, \lambda_j^-$ are distinct algebraic integers
  (as roots of characteristic polynomials of integer matrices).

  $\Leftarrow$)
  Assume that for every $k \in \N$ the equality $$\# S(\varphi^k, \psi^k) = \sum_{i=1}^{r} \chi_i \lambda_i^k$$
  where $r \in \N, \ \chi_i \in \Z$ and $\lambda_i \in \C$ are distinct algebraic integers, holds.
  It turns out that if $\lambda_i, \lambda_j$ are algebraically conjugate then $\chi_i = \chi_j$.
  Indeed, denote by $v(z)$ an irreducible polynomial roots of which are $\lambda_i, \lambda_j$.
  Denote by $\Sigma$ the field of $v(z)$ and let $\sigma$ be an element of the Galois group of $\Sigma$.
  By definition $\sigma$ is the identity function on $\Q$.
  It is well-known that $\sigma$ acts as a~permutation of the set of roots of $v(z)$,
  denote the induced permutation of indices by $\sigma$ as well,
  i.e. $\sigma(\lambda_i) = \lambda_{\sigma(i)}$.
  We get
  \[
    \sigma \left( \# S(\varphi^k, \psi^k) \right)
    = \sigma \left( \sum_{i=1}^r \chi_i \lambda_i^k \right)
    = \sum_{i=1}^r \chi_i \sigma(\lambda_i)^k
    = \sum_{i=1}^r \chi_i \lambda_{\sigma(i)}^k
    = \sum_{i=1}^r \chi_{\sigma^{-1}(i)} \lambda_i^k.
  \]
  On the other hand a number of synchronization points is an integer
  therefore $\sigma \left( \# S(\varphi^k, \psi^k) \right) = \# S(\varphi^k, \psi^k)$
  and for all $k \in \N$ we get
  \[
    \sum_{i=1}^r \chi_i \lambda_i^k \ = \ \sum_{i=1}^r \chi_{\sigma^{-1}(i)} \lambda_i^k.
  \]
  This equality holds especially for $k = 1, \dots r$, let us write it in a matrix form:
  \[
    \begin{pmatrix}
        \lambda_1   & \dots  & \lambda_r \\
        \vdots      & \ddots & \vdots    \\
        \lambda_1^r & \dots  & \lambda_r^r
    \end{pmatrix}
    \begin{pmatrix}
        \chi_1 \\ \vdots \\ \chi_r
    \end{pmatrix}
    =
    \begin{pmatrix}
        \lambda_1   & \dots  & \lambda_r \\
        \vdots      & \ddots & \vdots    \\
        \lambda_1^r & \dots  & \lambda_r^r
    \end{pmatrix}
    \begin{pmatrix}
        \chi_{\sigma^{-1}(1)} \\ \vdots \\ \chi_{\sigma^{-1}(r)}
    \end{pmatrix}.
  \]
  Since all $\lambda_i$ are distinct, the matrix is invertible (as the Vandermonde matrix)
  and $\chi_i = \chi_{\sigma^{-1}(i)}$ for all $i$ and for arbitrary $\sigma$.
  We know that for every pair $\lambda_i, \lambda_j$ there is an automorphism $\sigma$
  such that  $\sigma(\lambda_i) = \lambda_j$.
  It means that $\chi_i = \chi_j$ whenever $\lambda_i, \lambda_j$ are algebraically conjugate.

  Now let $v(z) = \prod_{i=1}^r (\lambda_i - z)$ be a polynomial roots of which are all $\lambda_i$.
  Let
  \begin{align*}
    v(z) = \prod_{\alpha} v_{\alpha}(z)
    = \prod_{\alpha} \left( z^{r_\alpha} + b_1^{(\alpha)} z^{r_\alpha-1}  + \dots +
      b_{r_\alpha - 1}^{(\alpha)} z + b_{r_\alpha}^{(\alpha)} \right),
      \quad b_i^{(\alpha)} \in \Z
  \end{align*}
  be its decomposition into irreducible polynomials.
  We split the set $A = A^+ \cup A^-$ of indices $\alpha$ into two subsets according to the sign of $\chi_\alpha$.
  We can write
  \begin{align}\label{eq:S=sum-of-eigenvalues}
  \begin{split}
    \# S(\varphi^k, \psi^k)
    &= \sum_{\alpha \in A} \chi_\alpha \left( \sum_{i=1}^{r_\alpha} \left(\lambda^{(\alpha)}_i\right)^k \right) \\
    &= \sum_{\alpha \in A^+} \chi_\alpha \left( \sum_{i=1}^{r_\alpha} \left(\lambda^{(\alpha)}_i\right)^k \right)
      - \sum_{\alpha \in A^-} |\chi_\alpha| \left( \sum_{i=1}^{r_\alpha} \left(\lambda^{(\alpha)}_i\right)^k \right).
  \end{split}
  \end{align}
  Consider the integer matrix
  \begin{equation*}
    M_\alpha := \begin{pmatrix}
        0      & 0      & \dotsb & 0      & - b_{r_\alpha}^{(\alpha)} \\
        1      & 0      & \dotsb & 0      & - b_{r_{\alpha-1}}^{(\alpha)} \\
        \vdots & \vdots &        & \vdots & \vdots \\
        0      & 0      & \dotsb & 0      & - b_{2}^{(\alpha)} \\
        0      & 0      & \dotsb & 1      & - b_{1}^{(\alpha)} \\
    \end{pmatrix}
  \end{equation*}
  Straightforward computation using the Laplace expansion shows that $\det(zI-M_\alpha) = v_\alpha(z)$
  therefore $\{\lambda_i^{(\alpha)}\}$ are the eigenvalues of $M_\alpha$.
  Continuing (\ref{eq:S=sum-of-eigenvalues}) we write
  \begin{equation*}
     \# S(\varphi^k, \psi^k)
     = \sum_{\alpha \in A^+} \chi_\alpha \tr M_\alpha^k - \sum_{\alpha \in A^-} |\chi_\alpha| \tr M_\alpha^k
  \end{equation*}
  Denote
  \begin{equation*}
    A_e := \bigoplus_{\alpha \in A^+} \bigoplus_{i=1}^{\chi_\alpha} M_\alpha, \quad
    A_o := \bigoplus_{\alpha \in A^-} \bigoplus_{i=1}^{|\chi_\alpha|} M_\alpha.
  \end{equation*}
  Using this notation we finally have
  \[
    \# S(\varphi^k, \psi^k) = \tr A_e^k - \tr A_o^k.
  \]
  The thesis follows from the next Lemma \ref{thm:integer-matrix-map-bouquet}.
\end{proof}

The next lemma is a result of \cite[Theorem 2.1]{BaBo}.
We present a formulation more convenient for our purposes.

\begin{lem}[\protect{\cite[Proposition 3.1.12]{JezMar}}]\label{thm:integer-matrix-map-bouquet}
  Let X be the bouquet of $n_1$ circles and $n_2$ $2$-spheres, $n_1 \geq 2$.
  Then for each pair of matrices $A_e \in M_{n_2}(\Z), A_o \in M_{n_1-1}(\Z)$
  there exists a self-map $f: X \to X$ satisfying
  \[
    L(f^k) = \tr A_e^k - \tr A_o^k
  \]
  for every $k \in \N$.
\end{lem}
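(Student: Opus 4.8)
The plan is to reduce the statement to prescribing the induced maps on homology of a self-map of $X$, and then to realize those maps summand by summand. Write $X = \bigvee_{i=1}^{n_1} S^1 \vee \bigvee_{j=1}^{n_2} S^2$, so that $H_0(X;\Q) = \Q$, $H_1(X;\Q) = \Q^{n_1}$, $H_2(X;\Q) = \Q^{n_2}$ and $H_q(X;\Q) = 0$ for $q \geq 3$; since all these groups are free, the matrix of $h_{*q}$ on $H_q(X;\Z)$ agrees with the one on $H_q(X;\Q)$ for any self-map $h$. As $X$ is path-connected and every self-map here is based, $h_{*0} = \Id$, whence
\[
  L(h^{k}) \;=\; 1 \;-\; \tr\big((h_{*1})^{k}\big) \;+\; \tr\big((h_{*2})^{k}\big).
\]
Thus it suffices to construct $f : X \to X$ with $f_{*1} = B$ and $f_{*2} = A_e$, where $B := [1] \oplus A_o \in M_{n_1}(\Z)$ is the block-diagonal matrix with a $1 \times 1$ block $[1]$ followed by $A_o$ (well-defined of size $n_1$ because $n_1 \geq 2$). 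Then $\tr(B^k) = 1 + \tr(A_o^k)$, so $L(f^k) = 1 - (1 + \tr A_o^k) + \tr A_e^k = \tr A_e^k - \tr A_o^k$; the extra block $[1]$ is inserted precisely to cancel the contribution of $H_0$.

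To build such an $f$, I would use that a based self-map of the wedge $X$ is the same datum as a based map of each wedge summand into $X$, and I would send every circle summand into $Y_1 := \bigvee_{i=1}^{n_1} S^1 \subseteq X$ and every sphere summand into $Y_2 := \bigvee_{j=1}^{n_2} S^2 \subseteq X$. On the $r$-th circle take the based loop that concatenates, for $i = 1, \dots, n_1$, the generating loop of the $i$-th circle traversed $b_{ir}$ times (reversing orientation when $b_{ir} < 0$); this map $S^1 \to Y_1$ sends the generator of $H_1(S^1)$ to the $r$-th column $(b_{1r}, \dots, b_{n_1 r})$ of $B$ and kills $H_2$, since $H_2(Y_1) = 0$. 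On the $s$-th sphere take a based map $S^2 \to Y_2$ representing the element of $\pi_2(Y_2)$ corresponding to the $s$-th column of $A_e$; this is possible because $Y_2$ is simply connected, so the Hurewicz map $\pi_2(Y_2) \to H_2(Y_2;\Z) = \Z^{n_2}$ is an isomorphism (concretely one may take the pinch map $S^2 \to \bigvee^{n_2} S^2$ post-composed with maps of degree $a_{js}$ on the factors, where $A_e = (a_{js})$), and it realizes the $s$-th column of $A_e$ on $H_2$ and kills $H_1$, since $H_1(S^2) = 0$. All these maps are based, hence assemble into a single continuous map $f : X \to X$.

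It then remains to read off $f_*$. Since each circle summand maps into $Y_1$ and each sphere summand into $Y_2$, the map $f$ restricts to self-maps $f|_{Y_1} : Y_1 \to Y_1$ and $f|_{Y_2} : Y_2 \to Y_2$, and the wedge decomposition $H_q(X) = H_q(Y_1) \oplus H_q(Y_2)$ for $q \geq 1$ is preserved by $f_*$. By construction $(f|_{Y_1})_{*1} = B$ while $H_2(Y_1) = 0$, and $(f|_{Y_2})_{*2} = A_e$ while $H_1(Y_2) = 0$; together with $f_{*0} = \Id$ this gives $f_{*1} = B$ on $H_1(X) \cong \Z^{n_1}$ and $f_{*2} = A_e$ on $H_2(X) \cong \Z^{n_2}$. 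Since $(f^{k})_{*q} = (f_{*q})^{k}$, plugging into the Lefschetz formula of the first paragraph yields $L(f^{k}) = \tr A_e^{k} - \tr A_o^{k}$ for every $k \in \N$.

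The argument is elementary and I do not expect a genuine obstacle; the only points that need a little care are the index bookkeeping that confines circle images to $Y_1$ and sphere images to $Y_2$ — this is exactly what makes the off-diagonal contributions $H_1 \to H_2$ and $H_2 \to H_1$ vanish and decouples the two prescribed matrices — and the shift by the block $[1]$ that converts the given $(n_1-1)$-dimensional datum $A_o$ into the $n_1$-dimensional matrix $B$ absorbing $H_0$. The two realizability facts used (an arbitrary integer matrix on $H_1$ of a wedge of circles, via loop concatenation or asphericity of $\bigvee S^1$; and on $H_2$ of a wedge of $2$-spheres, via Hurewicz or pinch-and-degree) are standard.
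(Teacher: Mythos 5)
Your proof is correct. Note that the paper itself gives no argument for this lemma: it is quoted verbatim from the literature (Babenko--Bogatyi, and Proposition 3.1.12 of Jezierski--Marzantowicz), so there is no in-paper proof to compare against; your realization argument --- prescribing $f_{*1}=[1]\oplus A_o$ on the wedge of circles via loop concatenation (equivalently an endomorphism of the free group) and $f_{*2}=A_e$ on the wedge of $2$-spheres via Hurewicz/pinch-and-degree maps, with the extra $[1]$ block cancelling the $H_0$ contribution in $L(f^k)=1-\tr(f_{*1}^k)+\tr(f_{*2}^k)$ --- is exactly the standard proof given in the cited source, and all the steps you use (wedge = coproduct of based spaces, $\pi_2(\bigvee S^2)\cong H_2$, functoriality $(f^k)_{*q}=(f_{*q})^k$) are sound. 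The only inessential flourish is confining circle images to $Y_1$ and sphere images to $Y_2$: since $H_2(S^1)=0$ and $H_1(S^2)=0$, the two degrees decouple automatically, so this bookkeeping is harmless but not needed.
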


\begin{thm}\label{thm:rational-congruences}
  Let $\varphi, \psi$ be a synchronously tame pair of
  maps of a topological space.
  If the synchronization zeta function $S_{\varphi, \psi}(z)$ is rational,
  then for every $n \in \N$ we have the Gauss congruences
    \begin{equation*} %\label{eq:congruence-synchronization}
      \sum_{k|n} \mu(n/k) \cdot \# S(\varphi^k, \psi^k) \equiv 0 \pmod n.
    \end{equation*}
\end{thm}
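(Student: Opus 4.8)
The plan is to bootstrap from the three lemmas just established and then invoke the Dold congruences~\eqref{Dold}. First I would assume $S_{\varphi,\psi}(z)$ is rational and apply Lemma~\ref{thm:rational-zeta-coefficients-sum}: this produces $r\in\N$, integers $\chi_i$, and distinct algebraic integers $\lambda_i$ with $\# S(\varphi^k,\psi^k)=\sum_{i=1}^r\chi_i\lambda_i^k$ for all $k\in\N$, which is exactly the hypothesis~\eqref{eq:coincidence-sum} of Lemma~\ref{thm:coefficients-sum-map-bouquet}.

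Next, Lemma~\ref{thm:coefficients-sum-map-bouquet} (together with Lemma~\ref{thm:integer-matrix-map-bouquet}, used inside its proof) yields a self-map $f\colon X\to X$ of a bouquet $X$ of circles and $2$-spheres --- a compact euclidean neighborhood retract --- such that $L(f^k)=\# S(\varphi^k,\psi^k)$ for every $k\in\N$. Concretely, the proof of that lemma builds integer matrices $A_e,A_o$ with $\# S(\varphi^k,\psi^k)=\tr A_e^k-\tr A_o^k$ and realizes them on $X$.

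Finally, since $X$ is a compact ANR, the Dold congruences~\eqref{Dold} apply to $f$ and give $\sum_{k\mid n}\mu(n/k)\,L(f^k)\equiv 0\pmod n$ for every $n\in\N$; substituting $L(f^k)=\# S(\varphi^k,\psi^k)$ gives the claimed congruence. Equivalently, one can skip the passage through $f$ and apply the matrix Gauss congruences~\eqref{Gauss} directly to $A_e$ and $A_o$: by linearity of the operator $\sum_{k\mid n}\mu(n/k)(\,\cdot\,)$ and the identity $\# S(\varphi^k,\psi^k)=\tr A_e^k-\tr A_o^k$, the congruence modulo $n$ follows at once.

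I expect no genuine obstacle here: this theorem is a corollary of the structural results already in hand, all of whose substance --- the Fatou-lemma argument showing the $\lambda_i$ are algebraic integers, and the realization of a prescribed trace sequence as a Lefschetz sequence on a bouquet --- lies in Lemmas~\ref{thm:rational-zeta-coefficients-sum}--\ref{thm:integer-matrix-map-bouquet}. The only minor point worth a sentence is that the two routes (Dold congruences for $f$ versus matrix Gauss congruences for $A_e,A_o$) are consistent, which is precisely the equivalence of~\eqref{Gauss}, \eqref{Euler}, and~\eqref{Dold} recalled at the beginning of Section~\ref{sec:congruences}.
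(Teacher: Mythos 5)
Your proposal is correct and follows essentially the same route as the paper: rationality plus Lemma~\ref{thm:rational-zeta-coefficients-sum} and Lemma~\ref{thm:coefficients-sum-map-bouquet} produce a self-map $f$ of a compact euclidean neighborhood retract with $L(f^k)=\# S(\varphi^k,\psi^k)$, and the Dold congruences \eqref{Dold} then give the result by direct substitution. Your observed shortcut via the matrix congruences \eqref{Gauss} applied to $A_e, A_o$ is a valid and slightly more economical variant, consistent with the equivalence of \eqref{Gauss} and \eqref{Dold} noted in Section~\ref{sec:congruences}.
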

\begin{proof}
  Lemma~\ref{thm:rational-zeta-coefficients-sum} and Lemma~\ref{thm:coefficients-sum-map-bouquet}
  show that $S_{\varphi, \psi}(z)$ is rational iff
  there exists a map $f: X \to X$ of a compact euclidean neighborhood retract $X$ to itself such that
  for every $k \in \N$ we have
  \begin{equation} \label{eq:synchornization=lefschetz}
    L(f^k) = \# S(\varphi^k, \psi^k).
  \end{equation}
  Direct substitution to the Dold congruences \eqref{Dold} finishes the proof.
\end{proof}

%===================================================================================================
%===========================  Section  =============================================================
%===================================================================================================
\vspace{1cm}

\section{The growth rate of number of synchronization points and topological entropy} \label{sec:entropy}

Let $\alpha, \beta: X \to X$ be homeomorphisms of a compact metric space to itself.
In this section we establish connection between the growth rate $S^\infty(\alpha, \beta)$
and topological entropy for a class of pairs of homeomorphisms.

A homeomorphism $f$ of a compact metric space $(X,d)$ is \emph{expansive} if there exists a constant $\epsilon > 0$
such that given any two distinct points $x, y \in X$, there exists $ n \in \Z$ such that $d(f^n(x), f^n(y)) >\epsilon$.
We call such an $\epsilon$ an~\emph{expansive constant}.

We will need also a notion of the Bowen's \emph{specification} \cite{Bow}.
%We cite the definition from \cite{Buzzi}.
The formal definition can seem vague at first so it is good to have the following intuition in mind.
We say that a self-map $f: X \to X$ of a compact metric space satisfies \emph{specification}
if, given a precision $\epsilon>0$ and a~number of segments of orbits, one is able to find a~periodic orbit
which, with the precision $\epsilon$, follows each one of them and is moving from one segment to another
in a fixed amount of time which depends only on $\epsilon$.

Formally, we say that a homeomorphism $f$ satisfies \emph{specification} if for each $\epsilon > 0$
there is an integer $p(\epsilon)$ for which the following is true.
Given pairs $(x_1, I_1), \dots, (x_n, I_n)$, where $x_i \in X$ and $I_i$ are intervals of integers
contained in $[a,b]$ such that $d(I_i, I_j) \geq p(\epsilon)$ for $i \neq j$,
there exists an $x \in X$ for which $f^{b-a+p(\epsilon)}(x) = x$
and $d(f^kx, f^kx_i) < \delta$ for all $k \in I_i$ for all $i=1,\dots,n$.

Our result is based on \cite[Lemma~4]{Bow} which is far more general than we need here.
We follow logic of the proof but we tailor it to our purpose.
By $S(f,n,\epsilon)$ we denote the order of the maximal $(n,\epsilon)$-separeted set.
The topological entropy of a homeomorphism $f$ is denoted by $h(f)$.

\begin{lem}[\protect{cf. \cite[Lemma 1]{Bow}}] \label{thm:bowen-lemma1}
  Let $f: X \to X$ be an expansive homeomoprhism of a compact metric space.
  For any sufficiently small positive $\epsilon, \delta$ there is a constant $C_{\delta, \epsilon}$
  such that
  \begin{equation*}
    S(f,n,\delta) \leq C_{\delta, \epsilon} \cdot S(f,n,\epsilon), \qquad \forall n \in \N.
  \end{equation*}
\end{lem}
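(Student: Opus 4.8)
The plan is to exploit expansiveness to upgrade separation at the fine scale $\delta$ into separation at the coarse scale $\epsilon$ over a slightly longer time window, and then to absorb the extra window length into a constant by means of a sub-multiplicativity estimate in the time parameter. Fix once and for all an expansive constant $c$ for $f$, and read \emph{sufficiently small} as $\epsilon\le c/2$ (so that $2\epsilon$ is again an expansive constant) with $\delta>0$ arbitrary.

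The first ingredient is a uniform form of expansiveness: for every $\delta>0$ there is an integer $N=N(\delta,\epsilon)$ such that, for all $x,y\in X$, $d(f^j x,f^j y)\le 2\epsilon$ for every $|j|\le N$ forces $d(x,y)\le\delta$. I would prove this by the standard compactness-and-contradiction argument: if it failed, there would be sequences $x_m,y_m\in X$ with $d(x_m,y_m)\ge\delta$ but $d(f^j x_m,f^j y_m)\le 2\epsilon$ whenever $|j|\le m$; passing to convergent subsequences $x_m\to x$, $y_m\to y$ and using continuity of each iterate $f^j$ yields $x\ne y$ with $d(f^j x,f^j y)\le 2\epsilon\le c$ for every $j\in\Z$, contradicting expansiveness.

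Next I would lengthen the time window. Let $E$ be a maximal $(n,\delta)$-separated set, so $|E|=S(f,n,\delta)$. Given distinct $x,y\in E$ there is $k$ with $0\le k<n$ and $d(f^k x,f^k y)>\delta$; applying the contrapositive of the first ingredient to the pair $f^k x,f^k y$ produces an index $j$ with $|j|\le N$ and $d(f^{k+j}x,f^{k+j}y)>2\epsilon$, where $k+j\in[-N,n+N-1]$. Translating by $f^{-N}$ then shows that $f^{-N}(E)$ is $(n+2N,2\epsilon)$-separated, whence $S(f,n,\delta)\le S(f,n+2N,2\epsilon)$. Finally I would eliminate the surplus $2N$ by a sub-multiplicativity bound: for all $a,b\in\N$ and $\eta>0$, $S(f,a+b,\eta)\le S(f,a,\eta/2)\cdot S(f,b,\eta/2)$. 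Indeed, a maximal $(a,\eta/2)$-separated set $D$ is automatically $(a,\eta/2)$-spanning, and likewise a maximal $(b,\eta/2)$-separated set $D'$; sending a point $x$ of an $(a+b,\eta)$-separated set to the pair $(d_x,d'_x)$, where $d_x\in D$ shadows the orbit of $x$ on $[0,a)$ and $d'_x\in D'$ shadows the orbit of $f^a x$ on $[0,b)$, each within $\eta/2$, is injective, since two points with the same image stay within $\eta$ on all of $[0,a+b)$ and hence coincide. Taking $a=n$, $b=2N$, $\eta=2\epsilon$ gives $S(f,n+2N,2\epsilon)\le S(f,n,\epsilon)\cdot S(f,2N,\epsilon)$, so $C_{\delta,\epsilon}:=S(f,2N,\epsilon)$ is the required constant: it is finite by compactness of $X$ and depends only on $\delta$, $\epsilon$ and $f$.

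The one point needing care — the main obstacle — is the bookkeeping of scales: the sub-multiplicativity step unavoidably loses a factor $2$ in the radius, so one must enter the whole argument at radius $2\epsilon$, which is precisely why $\epsilon\le c/2$ is imposed and why expansiveness has to be invoked with the coarser constant $2\epsilon$; once the scales are lined up this way, every other estimate is routine.
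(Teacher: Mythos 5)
Your argument is correct, but it is not the route taken in the paper. The paper (following Bowen) maps the $(n,\delta)$-separated set $E$ into a maximal $(n,\epsilon)$-separated set $F$ by orbit-shadowing and then bounds each fibre $E_a=\{x\in E: a(x)=a\}$: two points of the same fibre stay $2\epsilon$-close along the whole time window, hence (by the uniform expansiveness constant $N$) $\delta$-close in the middle of the window, so $(n,\delta)$-separation forces them to differ by more than a fixed $\alpha$ at time $0$ or time $n$; counting via the map $x\mapsto(x,f^nx)$ into $X\times X$ bounds the fibre size by the cardinality $M$ of a maximal $\alpha$-separated set of $X\times X$, giving $C_{\delta,\epsilon}=M$. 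You instead use the same uniform expansiveness constant $N$ to upgrade $\delta$-separation to $2\epsilon$-separation over a window lengthened by $2N$ (after shifting by $f^{-N}$, which is where the homeomorphism hypothesis enters), obtaining $S(f,n,\delta)\le S(f,n+2N,2\epsilon)$, and then peel off the extra $2N$ steps with the time-submultiplicativity estimate $S(f,a+b,\eta)\le S(f,a,\eta/2)\cdot S(f,b,\eta/2)$ (maximal separated sets are spanning, and the shadowing map is injective on an $(a+b,\eta)$-separated set), ending with the constant $C_{\delta,\epsilon}=S(f,2N,\epsilon)$, finite by compactness and independent of $n$. All steps check out, including the bookkeeping of scales ($2\epsilon\le c$ so that expansiveness applies at the coarse scale, and the halving in the submultiplicativity lands exactly on $\epsilon$). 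What each approach buys: the paper's fibre-counting gives a constant tied to an $\alpha$-net of $X\times X$ and transfers verbatim to Bowen's more general setting; yours yields the intermediate inequality $S(f,n,\delta)\le S(f,n+2N,2\epsilon)$ and a constant with a transparent dynamical meaning, at the price of invoking the separated/spanning comparison and the factor-of-two loss in radius that you correctly flag.
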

\begin{proof}
  Let $2\epsilon$ be an expansive constant.
  There is an $N \in \N$ such that if $d(f^kx, f^ky) \leq 2\epsilon$ for $k = -N, \dots, N$,
  then $d(x,y) \leq \delta$.
  We choose $\alpha>0$ so small that if $d(x,y) \leq \alpha$,
  then $d(f^kx, f^ky) \leq \delta$ for $k = -N, \dots, N$.
  Denote by $M$ the order of the maximal $\alpha$-net of the compact metric space $X \times X$.
  The constant $C_{\delta, \epsilon}$ will depend on $M$, which is independent of $n$.

  Let $F$ be a maximal $(n, \epsilon)$-separated set and $E$ an $(n, \delta)$-separated set.
  Because $F$ is maximal, for each $x \in E$ we can choose an $a(x) \in F$ which is indistinguishable
  from $x$ during $n$ iterations with the precision $\epsilon$, i.e.
  \begin{equation*}
    d(f^kx, f^ka(x)) \leq \epsilon, \qquad k=0, \dots n-1.
  \end{equation*}
  For each $a \in F$ we denote the subset of $E$ approximated in that manner by
  \begin{equation*}
    E_a := \left\{ x \in E \mid a(x) = x \right\}.
  \end{equation*}
  If $x, y \in E_a$, then $d(f^kx, f^ky) \leq 2\epsilon$ for $k=0,\dots,n-1$ by the triangle inequality.
  By the choice of $N$ we have $d(f^kx, f^ky) \leq \delta$ for $k=N,\dots,n-N$.

  Now because $\{x,y\} \subset E$ is $(n,\delta)$-separated there must be either $d(x,y) > \alpha$
  or $d(f^nx, f^ny) > \alpha$.
  The set $\{(x,f^nx) \subset X \times X \ \mid \ x \in E_a\}$ is therefore a subset
  of an $\alpha$-net of $X \times X$, so $\#E_a \leq M$.
  Hence
  \begin{equation*}
    \sum_{x \in E} 1 \ \leq \ \sum_{a \in F} \#E_a \cdot 1 \ \leq \ M \cdot S(f,n,\epsilon).
  \end{equation*}
  We get our result with $C_{\delta, \epsilon} = M$.
\end{proof}

\begin{lem}[\protect{cf. \cite[Lemma 2]{Bow}}] \label{thm:bowen-lemma2}
  Let $f: X \to X$ be an expansive homeomoprhism of a compact metric space satisfying specification.
  For sufficiently small positive $\epsilon$ and $n_1,\dots,n_k \in \N$
  there are positive constants $E_\epsilon, D_\epsilon$ such that
  \begin{equation*}
    \prod_{j=1}^k E_\epsilon \cdot S(f,n_j,\epsilon)
      \ \leq \ S(f,n_1 + \dots + n_k,\epsilon)
      \ \leq \ \prod_{j=1}^k D_\epsilon \cdot S(f,n_j,\epsilon)
  \end{equation*}
\end{lem}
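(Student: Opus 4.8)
The plan is to establish the two inequalities separately. Expansiveness enters only through Lemma~\ref{thm:bowen-lemma1}, which I use to pass between the scales $\epsilon$, $2\epsilon$, $\epsilon/4$, $\epsilon/8$ at the cost of constants independent of $n$; the specification hypothesis is needed only for the left-hand bound. Throughout, $\epsilon$ is taken small enough that Lemma~\ref{thm:bowen-lemma1} applies at all these scales. The workhorse is the elementary subadditivity of separated sets: for any $m_1,\dots,m_l\in\N$ and $\eta>0$,
\begin{equation*}
  S\bigl(f,\, m_1+\dots+m_l,\, 2\eta\bigr)\ \leq\ \prod_{j=1}^{l} S(f,m_j,\eta).
\end{equation*}
I would prove this by choosing for each $j$ a maximal $(m_j,\eta)$-separated set $F_j\subseteq X$ (which is therefore $(m_j,\eta)$-spanning) and, given any $(m_1+\dots+m_l,2\eta)$-separated set $E$, assigning to $z\in E$ the tuple $\bigl(\phi_1(z),\dots,\phi_l(z)\bigr)$, where $\phi_j(z)\in F_j$ $(m_j,\eta)$-shadows the orbit segment of $f^{t_j}(z)$ with $t_j:=m_1+\dots+m_{j-1}$. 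The triangle inequality shows that a coincidence of all the $\phi_j$ forces $d(f^sz,f^sz')\le 2\eta$ for every $s<m_1+\dots+m_l$, so the assignment is injective on $E$ and $\#E\le\prod_j\#F_j$. Taking $l=k$, $m_j=n_j$, $\eta=\epsilon$ and then applying Lemma~\ref{thm:bowen-lemma1} to replace $S(f,\sum n_j,2\epsilon)$ by $C_{\epsilon,2\epsilon}\cdot S(f,\sum n_j,\epsilon)$ yields the right-hand inequality with $D_\epsilon:=\max\{1,C_{\epsilon,2\epsilon}\}$, since then $D_\epsilon^{\,k}\ge D_\epsilon\ge C_{\epsilon,2\epsilon}$.

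For the left-hand inequality I would invoke specification. Fix $\delta:=\epsilon/4$, let $p:=p(\delta)$ be the corresponding specification constant, and take maximal $(n_j,\epsilon)$-separated sets $E_j$, so $\#E_j=S(f,n_j,\epsilon)$. For each tuple $(x_1,\dots,x_k)\in E_1\times\dots\times E_k$, apply specification to the $k$ orbit segments $\{f^ix_j\}_{0\le i<n_j}$ placed on intervals $I_1,\dots,I_k$ of lengths $n_1,\dots,n_k$ with consecutive gaps equal to $p$ (using that $f$ is a homeomorphism to realize ``follow $\{f^ix_j\}$ starting at time $t_j:=n_1+\dots+n_{j-1}+(j-1)p$'' as an instance of the definition), obtaining a point $z=z(x_1,\dots,x_k)$ that $\delta$-shadows each segment on its interval. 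If two tuples differ in the $j$-th coordinate, the $(n_j,\epsilon)$-separation of $E_j$ together with $\delta$-shadowing yields, at the witnessing time $s$ (which satisfies $s<N:=(n_1+\dots+n_k)+(k-1)p$), the estimate $d(f^sz,f^sz')>\epsilon-2\delta=\epsilon/2$. Hence the points $z(x_1,\dots,x_k)$ form an $(N,\epsilon/2)$-separated set, and $S(f,N,\epsilon/2)\ge\prod_j S(f,n_j,\epsilon)$.

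It remains to trade the inflated length $N=M+(k-1)p$, $M:=n_1+\dots+n_k$, back for $M$. The subadditivity estimate gives $S(f,N,\epsilon/2)\le S(f,M,\epsilon/4)\cdot S(f,(k-1)p,\epsilon/4)$ and, applied again to the $k-1$ blocks of length $p$, $S(f,(k-1)p,\epsilon/4)\le Q^{\,k-1}$ with $Q:=S(f,p,\epsilon/8)$ a finite constant depending only on $f$ and $\epsilon$. Combining this with the previous paragraph and one more application of Lemma~\ref{thm:bowen-lemma1} (namely $S(f,M,\epsilon/4)\le C_{\epsilon/4,\epsilon}\cdot S(f,M,\epsilon)$) gives $S(f,M,\epsilon)\ge\bigl(C_{\epsilon/4,\epsilon}\,Q^{\,k-1}\bigr)^{-1}\prod_j S(f,n_j,\epsilon)$, and one checks that $E_\epsilon:=\bigl(\max\{1,C_{\epsilon/4,\epsilon}\}\cdot Q\bigr)^{-1}$ works, since $E_\epsilon^{\,k}\le\bigl(C_{\epsilon/4,\epsilon}\,Q^{\,k-1}\bigr)^{-1}$ for every $k\ge1$.

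The triangle-inequality bookkeeping in the subadditivity estimate and the constant-chasing at the end are routine. I expect the main obstacle to be the middle of the specification argument: arranging the $k$ orbit segments on intervals with the prescribed mutual gap $p=p(\epsilon/4)$, correctly identifying the time index $s<N$ that witnesses the separation of two distinct glued points, and then trading the length $N$ back for $M=\sum n_j$ --- this is the only place where both hypotheses, specification directly and expansiveness through Lemma~\ref{thm:bowen-lemma1}, are genuinely used.
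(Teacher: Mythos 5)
Your proposal is correct and follows essentially the same route as the paper's proof: subadditivity of separated-set counts for the upper bound, and specification to glue orbit segments with gaps $p$ into a separated set of inflated length, which is then traded back using the subadditivity estimate and Lemma~\ref{thm:bowen-lemma1}. The only differences are in the bookkeeping of scales (you glue at precision $\epsilon/4$ against $(n_j,\epsilon)$-separated sets and pay constants $C_{\epsilon,2\epsilon}$, $C_{\epsilon/4,\epsilon}$, whereas the paper glues at precision $\epsilon$ against $3\epsilon$-separated sets, takes $D_\epsilon=1$, and applies Lemma~\ref{thm:bowen-lemma1} on the product side); if anything your version tracks the $\epsilon$ versus $\epsilon/2$ loss in the spanning step more carefully than the paper does.
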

\begin{proof}
  Let $E$ be $(n_1 + \dots + n_k, \epsilon)$-separated set and, for each $j=1,\dots,k$,
  let $F_j$ be a maximal $(n_j, \epsilon/2)$-separated set.
  We construct a map $g: E \to F_1 \times \dots \times F_k$ by sending
  $x \mapsto g(x) = \left(g_1(x), \dots, g_k(x) \right)$ so that
  \begin{equation*}
    d \big( f^{n_1 + \dots + n_{j-1} + i}(x), f^ig_j(x) \big) \ \leq \ \epsilon / 2, \quad i=0,\dots,n_j.
  \end{equation*}
  It is possible by specification and by maximality of all $F_j$.
  By triangle inequality $g$ is an injection, hence $\#E \leq \#F_1 \cdot \ldots \cdot \#F_k$.
  Because $E$ was arbitrary, we have the right-hand side inequality for $D_\epsilon = 1$
  \begin{equation} \label{eq:bowen-lemma2-first}
    S(f,n_1 + \dots + n_k, \epsilon) \leq \prod_{j=1}^k S(f,n_j,\epsilon).
  \end{equation}

  Now, for $j=1,\dots,k$, let $E_j$ be a maximal $(n_j, 3\epsilon)$-separated set.
  Let $a_j := n_1 + \dots + n_{j-1} + (j-1)p(\epsilon)$ where $p(\epsilon)$ is as in the definition of specification.
  Similarly as before, for each $z = (z_1, \dots, z_k) \in E_1 \times \dots \times E_k$
  we can find an $x = x(z) \in X$ such that
  \begin{eqnarray*}
    d \left( f^{a_j+i}(x), f^i(z_j) \right) \ < \ \epsilon, \quad i=0,\dots,n_j.
  \end{eqnarray*}
  It is again possible by specification.
  By the triangle inequality the map $x(z)$ is an injection and the image
  $E := \{x(z) : z \in E_1 \times \dots \times E_k\} \subset X$ is an $(m, \epsilon)$-separated set,
  where $m := n_1 + \dots + n_k + (k-1)p(\epsilon)$.
  Hence, for the order of a maximal $(m,\epsilon)$-separated set we can write
  \begin{equation*}
    S(f,m,\epsilon) \ \geq \ \prod_{j=1}^k S(f,n_j,3\epsilon).
  \end{equation*}
  By already proved \eqref{eq:bowen-lemma2-first} we have
  \begin{equation*}
    S(f,m,\epsilon) \ \leq \ S(f,n_1 + \ldots + n_k, \epsilon) \cdot S(f,p(\epsilon))^{k-1}.
  \end{equation*}
  By the two last inequalities
  \begin{equation*}
    S(f,n_1 + \ldots + n_k,\epsilon) \ \geq \ \frac{1}{S(f,p(\epsilon))^{k-1}} \prod_{j=1}^k S(f,n_j,3\epsilon).
  \end{equation*}
  Finally, Lemma \ref{thm:bowen-lemma1} gives us
  $S(f,n_j,\epsilon) \leq C_{\epsilon, 3\epsilon} \cdot S(f,n_j,3\epsilon)$, therefore
  \begin{equation*}
    S(f,n_1 + \ldots + n_k,\epsilon) \ \geq \
      \frac{1}{S(f,p(\epsilon))^{k-1}C_{\epsilon,3\epsilon}^k} \prod_{j=1}^k S(f,n_j,\epsilon)
  \end{equation*}
  which is actually the left-hand side inequality.
\end{proof}

\begin{lem}[\protect{cf. \cite[Lemma 3]{Bow}}] \label{thm:bowen-lemma3}
  Let $f: X \to X$ be an expansive homeomoprhism of a compact metric space satisfying specification.
  For sufficiently small positive $\epsilon$
  \begin{equation*}
    \frac{1}{D_\epsilon} \exp \big( h(f) \cdot n \big)
      \ \leq \ S(f,n,\epsilon)
      \ \leq \ \frac{1}{E_\epsilon} \exp \big( h(f) \cdot n \big), \qquad \forall n \in \N,
  \end{equation*}
  where $D_\epsilon, E_\epsilon$ are as in {\rm Lemma \ref{thm:bowen-lemma2}}.
\end{lem}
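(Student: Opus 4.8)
The plan is to show that the normalized logarithm of $S(f,n,\epsilon)$ has a limit $h_\epsilon$, that this limit coincides with the topological entropy $h(f)$ once $\epsilon$ lies below the expansive constant, and that $h_\epsilon$ controls $S(f,n,\epsilon)$ from both sides up to the multiplicative constants of Lemma~\ref{thm:bowen-lemma2}; the two displayed inequalities then fall out.

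First I would note that, by Lemma~\ref{thm:bowen-lemma2} applied with $k=2$ (where, as \eqref{eq:bowen-lemma2-first} shows, one may take $D_\epsilon=1$), the sequence $b_n:=\log S(f,n,\epsilon)$ satisfies $b_{m+n}\le b_m+b_n$; each $b_n$ is finite because $X$ is compact. Fekete's subadditive lemma then gives that $h_\epsilon:=\lim_{n\to\infty} b_n/n$ exists and equals $\inf_{n} b_n/n$. In particular $b_n/n\ge h_\epsilon$ for every $n$, that is, $S(f,n,\epsilon)\ge e^{h_\epsilon n}$, which will be the left-hand inequality once $h_\epsilon$ is identified with $h(f)$.

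Next I would identify $h_\epsilon$ with $h(f)$. By Lemma~\ref{thm:bowen-lemma1}, for all sufficiently small positive $\delta,\epsilon$ one has $S(f,n,\delta)\le C_{\delta,\epsilon}\,S(f,n,\epsilon)$ for every $n$; dividing by $n$, taking logarithms and letting $n\to\infty$ yields $h_\delta\le h_\epsilon$, and by symmetry (any positive number below an expansive constant is again an expansive constant) $h_\epsilon\le h_\delta$, so $h_\epsilon$ does not depend on $\epsilon$ in this range. Since topological entropy is, by definition, $h(f)=\lim_{\epsilon\to0}\big(\limsup_{n}\tfrac1n\log S(f,n,\epsilon)\big)=\lim_{\epsilon\to0}h_\epsilon$, this common value is exactly $h(f)$, so $h_\epsilon=h(f)$ for all small $\epsilon$. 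This identification, resting on expansiveness via Lemma~\ref{thm:bowen-lemma1} together with the standard fact that below an expansive constant a single scale already detects the full entropy, is the only genuinely non-formal step; everything else is Fekete's lemma and the estimates of Lemma~\ref{thm:bowen-lemma2}.

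For the upper bound I would use the other half of Lemma~\ref{thm:bowen-lemma2}: taking all $n_j$ equal to $n$ with $k$ factors gives $E_\epsilon^{\,k}\,S(f,n,\epsilon)^{k}\le S(f,kn,\epsilon)$. Dividing by $kn$, taking logarithms and letting $k\to\infty$ (evaluating the right-hand limit as $h_\epsilon=h(f)$ by the first step) yields $\tfrac1n\log\big(E_\epsilon\,S(f,n,\epsilon)\big)\le h(f)$, i.e. $S(f,n,\epsilon)\le E_\epsilon^{-1}e^{h(f)n}$. Combining this with the lower bound $S(f,n,\epsilon)\ge e^{h(f)n}=D_\epsilon^{-1}e^{h(f)n}$ from the first two steps gives the claimed two-sided estimate.
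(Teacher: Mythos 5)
Your proof is correct and takes essentially the same route as the paper: the upper bound comes, exactly as in the text, from the supermultiplicative half of Lemma~\ref{thm:bowen-lemma2} applied to $k$ equal blocks of length $n$ and letting $k\to\infty$, while the lower bound uses the submultiplicative half (with $D_\epsilon=1$ from \eqref{eq:bowen-lemma2-first}). The only difference is one of care rather than method: you invoke Fekete's lemma together with Lemma~\ref{thm:bowen-lemma1} to justify that $\lim_{n\to\infty}\frac{1}{n}\log S(f,n,\epsilon)$ exists and equals $h(f)$ once $\epsilon$ is below an expansive constant, a point the paper's proof uses implicitly when it writes $h(f)=\lim_{k\to\infty}\frac{1}{kn}\log S(f,kn,\epsilon)$, so your version fills in that step explicitly without changing the argument.
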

\begin{proof}
  By Lemma \ref{thm:bowen-lemma2} we have $S(f,kn,\epsilon) \geq \big(E_\epsilon \cdot S(f,n,\epsilon)\big)^k$.
  Therefore, from the definition of topological entropy
  \begin{equation*}
    h(f) \ =  \ \lim_{k \to \infty} \frac{1}{kn} \log S(f,kn,\epsilon) \ \geq \
      \frac{1}{n} \log \big( E_\epsilon \cdot S(f,n,\epsilon) \big).
  \end{equation*}
  Now assume that $S(f,n,\epsilon) > \exp\big(h(f)\cdot n\big) / E_\epsilon$.
  Then
  \begin{equation*}
    \frac{1}{n} \log \big( E_\epsilon \cdot S(f,n,\epsilon) \big) \ > \
      \frac{1}{n} \log \exp \big(h(f) \cdot n \big) \ = \ h(f).
  \end{equation*}
  The contradiction $h(f) > h(f)$ finishes the proof of the right-hand side inequality.

  The left-hand side inequality is provided similarly.
\end{proof}

\begin{lem}[\protect{cf. \cite[Lemma 4]{Bow}}] \label{thm:bowen-lemma4}
  Let $f: X \to X$ be an expansive homeomoprhism of a compact metric space satisfying specification.
  There are positive constants $A, B$ such that
  \begin{equation} \label{eq:bowen-lemma4}
    A \cdot \exp \big( h(f) \cdot n \big)
      \ \leq \ \#\Fix(f^n)
      \ \leq \ B \cdot \exp \big( h(f) \cdot n \big)
  \end{equation}
  for sufficiently large $n \in \N$.
\end{lem}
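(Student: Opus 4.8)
The plan is to establish the two inequalities in \eqref{eq:bowen-lemma4} separately, in each case comparing $\#\Fix(f^n)$ with the separated-set count $S(f,n,\epsilon)$ and then applying Lemma~\ref{thm:bowen-lemma3}. Throughout, fix $\epsilon>0$ small enough to be simultaneously smaller than an expansive constant for $f$ and small enough for Lemmas~\ref{thm:bowen-lemma1}--\ref{thm:bowen-lemma3} to apply.

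For the upper bound, observe that $\Fix(f^n)$ is an $(n,\epsilon)$-separated set: if $x\neq y$ were distinct fixed points of $f^n$ with $d(f^kx,f^ky)\le\epsilon$ for $k=0,\dots,n-1$, then periodicity would force $d(f^kx,f^ky)\le\epsilon$ for every $k\in\Z$, contradicting expansiveness. Hence $\#\Fix(f^n)\le S(f,n,\epsilon)$ (in particular $\Fix(f^n)$ is finite), and Lemma~\ref{thm:bowen-lemma3} gives $\#\Fix(f^n)\le \tfrac{1}{E_\epsilon}\exp\big(h(f)\,n\big)$, so $B:=1/E_\epsilon$ works.

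The lower bound is the substantive step and is where specification is used to turn separated orbit segments into genuine periodic points. Set $m:=n-p(\epsilon)$, where $p(\epsilon)$ is the transition time from the definition of specification, and assume $n$ is large enough that $m\ge 1$. Let $E$ be a maximal $(m+1,3\epsilon)$-separated set. For each $x\in E$ apply specification to the single pair $\big(x,[0,m]\big)$ to obtain a point $y=y(x)$ with $f^{m+p(\epsilon)}(y)=f^{n}(y)=y$ and $d(f^ky,f^kx)<\epsilon$ for $k=0,\dots,m$. If $x\neq x'$ in $E$, there is $k\le m$ with $d(f^kx,f^kx')>3\epsilon$, so by the triangle inequality $d(f^ky(x),f^ky(x'))>3\epsilon-2\epsilon=\epsilon>0$ and $y(x)\neq y(x')$; thus $x\mapsto y(x)$ is injective and $\#\Fix(f^n)\ge\#E=S(f,m+1,3\epsilon)$. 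Now Lemma~\ref{thm:bowen-lemma1} (taken with the scales $\epsilon$ and $3\epsilon$) gives $S(f,m+1,3\epsilon)\ge C_{\epsilon,3\epsilon}^{-1}S(f,m+1,\epsilon)$, and Lemma~\ref{thm:bowen-lemma3} gives $S(f,m+1,\epsilon)\ge D_\epsilon^{-1}\exp\big(h(f)(m+1)\big)$. Since $m+1=n-p(\epsilon)+1$ with $p(\epsilon)$ independent of $n$, we have $\exp\big(h(f)(m+1)\big)=\exp\big(h(f)(1-p(\epsilon))\big)\cdot\exp\big(h(f)\,n\big)$, and collecting the $n$-independent factors yields $\#\Fix(f^n)\ge A\exp\big(h(f)\,n\big)$ with $A:=\exp\big(h(f)(1-p(\epsilon))\big)/(C_{\epsilon,3\epsilon}D_\epsilon)$.

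The only genuine obstacle is the lower bound: one must choose the separation scales ($3\epsilon$ against $\epsilon$) carefully so that the specification-shadowed orbits remain pairwise distinct, and one must absorb the fixed transition time $p(\epsilon)$ — which merely shifts the index $n$ by a constant — into the multiplicative constants rather than into the exponential rate. The restriction to sufficiently large $n$ is precisely what guarantees $n-p(\epsilon)\ge 1$; everything else is a direct assembly of Lemmas~\ref{thm:bowen-lemma1} and~\ref{thm:bowen-lemma3}.
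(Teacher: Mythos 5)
Your proof is correct and follows essentially the same route as the paper: the upper bound via the observation that $\Fix(f^n)$ is $(n,\epsilon)$-separated (expansiveness plus periodicity) together with Lemma~\ref{thm:bowen-lemma3}, and the lower bound by shadowing a $3\epsilon$-separated set with genuine $f^n$-periodic points via specification and injectivity from the triangle inequality. The only (immaterial) difference is bookkeeping: the paper applies Lemma~\ref{thm:bowen-lemma3} directly at scale $3\epsilon$, obtaining the constant $D_{3\epsilon}$, whereas you pass through Lemma~\ref{thm:bowen-lemma1} to return to scale $\epsilon$ before applying Lemma~\ref{thm:bowen-lemma3}; both yield valid $n$-independent constants.
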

\begin{proof}
  Let $\epsilon$ be no greater than an expansive constant of $f$.
  Let $x,y \in \Fix(f^n)$.
  If $d(f^kx, f^ky) \leq \epsilon$ holds for $k=0,\dots,n-1$, then it holds for all $k \in \Z$
  and, by expansiveness, $x=y$.
  The set $\Fix(f^n)$ is therefore $(n,\epsilon)$-separated.
  Hence $\#\Fix(f^n) \leq S(f,n,\epsilon)$ and Lemma~\ref{thm:bowen-lemma3} gives us $B$.

  Consider now $n \geq p(\epsilon)$, where $p(\epsilon)$ is as in the definition of specification.
  Let $E$ be an $(n-p(\epsilon),3\epsilon)$-separated set.
  By specification, for each $z \in E$ there exists an $x(z) \in \Fix(f^n)$ such that
  $d(f^kz, f^kx(z)) \leq \epsilon$ for $k=0,\dots,n-p(\epsilon)$.
  For $z \neq z'$ there is $x(z) \neq x(z')$, therefore $\#\Fix(f^n) \geq S(f,n-p(\epsilon),3\epsilon)$.
  Using Lemma~\ref{thm:bowen-lemma3} we get
  \begin{equation*}
    S(f,n-p(\epsilon),3\epsilon) \ \geq \frac{1}{D_{3\epsilon} \cdot \exp \big( h(f) \cdot p(\epsilon) \big)}
      \exp \big( h(f) \cdot n \big),
  \end{equation*}
  which gives us $A$.
\end{proof}

\begin{thm} \label{thm:growth=entropy}
  Let $\alpha, \beta: X \to X$ be a synchronously tame pair of commuting homeomorphisms of a compact metric space $X$
  such that $\beta^{-1}\alpha$ is expansive and satisfies specification.
  Then
  \begin{equation*}
    S^\infty(\alpha, \beta) \ = \ \exp \big( h(\beta^{-1}\alpha) \big).
  \end{equation*}
\end{thm}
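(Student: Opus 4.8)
The plan is to reduce the statement directly to Lemma~\ref{thm:bowen-lemma4} by means of the elementary observation, already used in Section~\ref{sec:axiom-A}, that synchronization points of commuting maps are fixed points of their ``quotient''. First I would set $f := \beta^{-1}\alpha$. Since $\alpha$ and $\beta$ commute, for every $n \in \N$ we have $(\beta^{-1}\alpha)^n = \beta^{-n}\alpha^n$, and therefore
\[
  x \in S(\alpha^n, \beta^n) \iff \alpha^n(x) = \beta^n(x) \iff (\beta^{-1}\alpha)^n(x) = x \iff x \in \Fix(f^n).
\]
Consequently $\# S(\alpha^n, \beta^n) = \# \Fix(f^n)$ for all $n \in \N$; in particular the synchronous tameness hypothesis for the pair $(\alpha,\beta)$ is equivalent to finiteness of all $\Fix(f^n)$, which is the standing assumption needed below.

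Next I would apply Lemma~\ref{thm:bowen-lemma4} to $f = \beta^{-1}\alpha$, which is admissible because $f$ is by hypothesis an expansive homeomorphism of a compact metric space satisfying specification. This furnishes positive constants $A, B$ with
\[
  A \cdot \exp\big( h(f)\cdot n \big) \ \leq \ \# \Fix(f^n) \ \leq \ B \cdot \exp\big( h(f)\cdot n \big)
\]
for all sufficiently large $n$. Raising to the power $1/n$ and using $A^{1/n} \to 1$ and $B^{1/n} \to 1$ as $n \to \infty$, a squeeze argument yields
\[
  \lim_{n\to\infty} \big( \# \Fix(f^n) \big)^{1/n} \ = \ \exp\big( h(f) \big).
\]

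Combining the two computations gives
\[
  S^\infty(\alpha, \beta) \ = \ \lim_{n\to\infty} \big( \# S(\alpha^n,\beta^n) \big)^{1/n}
  \ = \ \lim_{n\to\infty} \big( \# \Fix(f^n) \big)^{1/n}
  \ = \ \exp\big( h(\beta^{-1}\alpha) \big),
\]
and in particular the limit defining the growth rate exists. The argument is essentially immediate given the preparatory lemmas; the only points requiring a little care are the identity $(\beta^{-1}\alpha)^n = \beta^{-n}\alpha^n$, which is precisely where commutativity enters, and the remark that the two-sided exponential estimate of Lemma~\ref{thm:bowen-lemma4}, although valid only for large $n$, suffices to determine the $n$-th root limit. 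No genuine obstacle arises.
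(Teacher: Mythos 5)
Your proposal is correct and follows essentially the same route as the paper: identify $\# S(\alpha^n,\beta^n)$ with $\#\Fix\big((\beta^{-1}\alpha)^n\big)$ using commutativity, then apply Lemma~\ref{thm:bowen-lemma4} and take $n$-th roots, letting $A^{1/n}, B^{1/n} \to 1$. The only difference is cosmetic: you make the remarks about tameness and the sufficiency of the estimate for large $n$ explicit, which the paper leaves implicit.
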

\begin{proof}
  Directly from the definition of $S(\alpha, \beta)$ and commutativity
  \begin{align*}
    x \in S(\alpha^n, \beta^n) \ &\Leftrightarrow \ \alpha^n(x) = \beta^n(x)
    \ \Leftrightarrow \ (\beta^{-1}\alpha)^n(x) = x
    \ \Leftrightarrow \ x \in \Fix \left( (\beta^{-1}\alpha)^n \right).
  \end{align*}
  We substitute $\# S(\alpha^n, \beta^n)$ into \eqref{eq:bowen-lemma4} and take $n$-th root
  \begin{equation*}
    \sqrt[n]{A} \cdot \exp \big( h(\beta^{-1}\alpha) \big)
      \ \leq \ \sqrt[n]{\# S(\alpha^n, \beta^n)}
      \ \leq \ \sqrt[n]{B} \cdot \exp \big( h(\beta^{-1}\alpha) \big).
  \end{equation*}
  Taking the limit $n \to \infty$ provides us with the result.
\end{proof}

\begin{col}
  Let $\alpha, \beta$ be as in {\rm Theorem \ref{thm:growth=entropy}}
  and consider the synchronization zeta function $S_{\alpha,\beta}(z)$.
  The radius of its convergence $R$ is connected with topological entropy
  in the following way
  \begin{equation*}
    R \ = \ \frac{1}{\limsup_{n \to \infty} \sqrt{\frac{S(\alpha^n, \beta^n)}{n}}}
      \ = \ \frac{1}{S^\infty(\alpha, \beta)}
      \ = \ \exp \big(-h(\beta^{-1}\alpha) \big).
  \end{equation*}
\end{col}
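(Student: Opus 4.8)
The plan is to obtain this corollary as a direct assembly of the Cauchy--Hadamard description of the radius of convergence, already recalled in the introduction, together with the identity of Theorem~\ref{thm:growth=entropy}. No new analytic input is needed.

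First I would recall that every Taylor coefficient of
$S_{\alpha,\beta}(z)=\exp\!\big(\sum_{k\geq 1}\tfrac{\#S(\alpha^k,\beta^k)}{k}z^k\big)$
is nonnegative, being obtained by exponentiating a power series with nonnegative coefficients; hence the radius of convergence of $S_{\alpha,\beta}$ coincides with that of the logarithm series $\sum_{k\geq 1}\tfrac{\#S(\alpha^k,\beta^k)}{k}z^k$. By the Cauchy--Hadamard formula the latter radius equals $1/\limsup_{n\to\infty}\big(\tfrac{\#S(\alpha^n,\beta^n)}{n}\big)^{1/n}$, which is exactly the remark made in the introduction. Since $n^{1/n}\to 1$, dropping the factor $1/n$ under the root does not change the $\limsup$, so $R=1/S^\infty_+(\alpha,\beta)$; this gives the first two members of the asserted chain of equalities.

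Next I would invoke Theorem~\ref{thm:growth=entropy}: under the standing hypotheses ($\alpha,\beta$ commuting homeomorphisms of a compact metric space with $\beta^{-1}\alpha$ expansive and satisfying specification), the growth rate $S^\infty(\alpha,\beta)$ exists and equals $\exp\!\big(h(\beta^{-1}\alpha)\big)$; in particular $S^\infty_+(\alpha,\beta)=S^\infty(\alpha,\beta)$. Substituting into the previous display yields $R=1/S^\infty(\alpha,\beta)=\exp\!\big(-h(\beta^{-1}\alpha)\big)$, completing the chain.

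I expect no genuine obstacle here, since the statement is purely a bookkeeping consequence of results already available. The one step deserving a word of care is the first: one must note that it is the radius of convergence of the zeta function \emph{itself} — not merely of its logarithmic derivative or of the bare counting series $\sum\#S(\alpha^n,\beta^n)z^n$ — that is controlled by $S^\infty_+$. This is immediate from the nonnegativity of the Taylor coefficients (the coefficient of $z^n$ in $\exp(g(z))$ dominates the $n$-th coefficient of $g$), and in fact it is exactly what is taken for granted in the introduction's appeal to Cauchy--Hadamard, so the point can be dispatched in one sentence.
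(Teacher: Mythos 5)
Your proposal is correct and follows essentially the same route the paper intends: the corollary is stated without a separate proof, resting precisely on the introduction's Cauchy--Hadamard remark that the radius of convergence equals $1/S^\infty_+(\alpha,\beta)$ together with Theorem~\ref{thm:growth=entropy}. Your added observation that the radius of convergence of $S_{\alpha,\beta}(z)=\exp(g(z))$ coincides with that of the inner series $g$ (since the nonnegative coefficients of $\exp(g)$ dominate those of $g$, while $\exp(g)$ is holomorphic on the disc where $g$ converges) is a correct justification of the step the paper takes for granted.
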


\begin{rmk}
  The result of {\rm Theorem \ref{thm:growth=entropy}} is an analog of one of Miles \cite[Theorem~2.2]{Miles13}
  but we want to highlight the importance of the assumption of specification.
\end{rmk}

%===================================================================================================
%===========================  Section  =============================================================
%===================================================================================================
\vspace{1cm}

\section{Asymptotic behavior of the sequence $\{\# S(\varphi^k, \psi^k)\}$}  \label{sec:asymptotic}

Assume that Synchronization zeta function is rational.
Then the equality \eqref{eq:coincidence-sum}  $\# S(f^k, g^k) = \sum_{i=1}^{r} \chi_i \lambda_i^k$ holds
and from the proof of Lemma~\ref{thm:rational-zeta-coefficients-sum} we know
that $\lambda_i^{-1}$ are zeros and poles of the zeta function.
Define
\begin{equation*}
  \lambda(f,g) := \max_i \{|\lambda_i|\}, \qquad n(f,g) := \#\{ i \mid |\lambda_i| = \lambda(f,g) \}.
\end{equation*}
In Theorem~\ref{thm:synchronization-dichotomy} we proved a formula for growth
of synchronization points.
Providing $S_{f,g}(z)$ is rational we can say more about the structure of the set
of limit points of the sequence $\{S(f^k, g^k) / \lambda(f,g)^k\}_{k=1}^\infty$.
Babenko and Bogatyi stated the following theorem.

\begin{thm}[\protect{\cite[Theorem 2.6]{BaBo}, see also \cite[Theorem 3.1.53]{JezMar}}]\label{thm:BaBo-trichotomy}
  If $f$ is a map of a compact ANR $X$ to itself and $r(f)$ is the reduced (or essential) spectral radius
  of the map $f_*$ induced on the homology group, then one of the following possibilities holds:
  \begin{enumerate}
      \item[\rm (i)] $L(f^k) = 0, \ k=1,2,\dots$ .
      \item[\rm (ii)] The set of limit points of the sequence $\left\{ L(f^k) \ / \ r(f)^k \right\}_{k=1}^\infty$
        contains an interval.
      \item[\rm (iii)] The sequence $\left\{ L(f^k) \ / \ r(f)^k \right\}_{k=1}^\infty$ has the same limit points
        as the periodic sequence $\left\{ \sum_{\epsilon_i^m=1} A_i \epsilon_i^k \right\}_{k=1}^\infty$,
        where the $A_i$ are integers which do not vanish simultaneously.
  \end{enumerate}
\end{thm}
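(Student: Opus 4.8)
The plan is to reduce the trichotomy, via the Lefschetz fixed point formula and the spectral decomposition of the maps induced on homology, to a statement about the limit set of an almost periodic real‑valued sequence built from powers of algebraic integers, and then to settle the three alternatives by equidistribution together with the Galois structure of those integers.

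First I would record the algebraic normal form of the sequence $\{L(f^k)\}$. Since $X$ is a compact ANR, each $H_i(X;\Q)$ is finite dimensional and each induced map $f_{*,i}:H_i(X;\Q)\to H_i(X;\Q)$ preserves the lattice $H_i(X;\Z)/\mathrm{torsion}$, so its characteristic polynomial lies in $\Z[z]$ and its eigenvalues are algebraic integers. Collecting the eigenvalues of all the $f_{*,i}$ by value, with multiplicities weighted by the signs $(-1)^i$, the Lefschetz formula yields
\begin{equation*}
  L(f^k)\;=\;\sum_{\mu}c_\mu\,\mu^{k},\qquad c_\mu\in\Z,
\end{equation*}
a finite sum over distinct nonzero algebraic integers $\mu$ (zero eigenvalues contribute nothing for $k\ge1$), and, when the set $\{\mu:c_\mu\neq0\}$ is nonempty, $r(f)=\max\{\,|\mu|:c_\mu\neq0\,\}$. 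As in the proof of Lemma~\ref{thm:coefficients-sum-map-bouquet}, a Vandermonde/Galois argument shows $c_\mu=c_{\mu'}$ whenever $\mu,\mu'$ are conjugate over $\Q$. If $\{\mu:c_\mu\neq0\}$ is empty then $L(f^k)=0$ for all $k$, which is alternative (i); so I may assume $r(f)>0$.

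Next I would isolate the dominant part. Put $\omega_\mu:=\mu/r(f)$, so $|\omega_\mu|=1$ exactly when $|\mu|=r(f)$, and write
\begin{equation*}
  \frac{L(f^k)}{r(f)^k}\;=\;\sum_{|\mu|=r(f)}c_\mu\,\omega_\mu^{\,k}\;+\;\sum_{|\mu|<r(f)}c_\mu\Bigl(\tfrac{\mu}{r(f)}\Bigr)^{k};
\end{equation*}
the second sum tends to $0$, so the limit points of $\{L(f^k)/r(f)^k\}$ coincide with those of $g(k):=\sum_{|\mu|=r(f)}c_\mu\,\omega_\mu^{\,k}$, which is real valued because non‑real eigenvalues occur in complex‑conjugate pairs with equal integer coefficients $c_\mu=c_{\overline\mu}$. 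If every $\omega_\mu$ occurring here is a root of unity, then $g$ is periodic with period the least common multiple of their orders, its limit set is the finite set of its values, and writing $\epsilon_i:=\omega_{\mu_i}$, $A_i:=c_{\mu_i}$ this is alternative (iii), with the $A_i\in\Z$ not vanishing simultaneously since $r(f)$ is attained by some $\mu$ with $c_\mu\neq0$. Otherwise some $\omega_\mu$ with $c_\mu\neq0$ is not a root of unity, and I would argue the limit set contains an interval: enumerating the eigenvalues of modulus $r(f)$ as $\mu_1,\dots,\mu_s$ and letting $H\subseteq\T^s$ be the closure of the cyclic subgroup generated by $(\omega_{\mu_1},\dots,\omega_{\mu_s})$, Weyl equidistribution identifies the limit set of $g$ with $\Psi(H)$, where $\Psi(w_1,\dots,w_s)=\sum_j c_{\mu_j}w_j$; the identity component $H_0$ has positive dimension, so $\Psi(H_0)$ is a compact connected subset of $\R$, hence a nondegenerate interval unless $\Psi$ is constant on $H_0$ — which is precisely alternative (ii).

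I expect the main obstacle to be exactly that last dichotomy: showing $\Psi$ is nonconstant on $H_0$ once some dominant $\omega_\mu$ with $c_\mu\neq0$ is not a root of unity. The delicacy is that distinct eigenvalues $\mu,\mu'$ of modulus $r(f)$ with $\mu/\mu'$ a root of unity restrict to the same character of $H_0$, so their coefficients are lumped together and could a priori cancel; ruling this out uses that the $\mu$ form complete Galois orbits on which $c_\mu$ is constant, together with the integrality of the $\mu$ (via a Kronecker‑type rigidity for algebraic integers all of whose conjugates have a prescribed modulus), so that the relevant coefficient sums cannot all vanish. Feeding this into Weyl's theorem on $H_0$ then produces the interval. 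The remaining ingredients — integrality of the homology eigenvalues, the decay of the subdominant tail, and the periodicity in the root‑of‑unity case — are routine, and the complete argument along these lines is carried out in \cite[Theorem~2.6]{BaBo} (see also \cite[Theorem~3.1.53]{JezMar}).
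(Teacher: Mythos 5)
Two remarks before the verdict. The paper itself contains no proof of this statement: Theorem~\ref{thm:BaBo-trichotomy} is imported verbatim from \cite[Theorem 2.6]{BaBo} (see also \cite[Theorem 3.1.53]{JezMar}) and serves only as motivation for Theorem~\ref{thm:synchronization-asymptotic-sequence}, whose proof is the closest internal analogue of your argument (the same decomposition of the sequence into $\sum_\mu c_\mu \mu^k$, discarding the subdominant tail, then a density argument on a torus, there via Kronecker's theorem). Your reduction --- integrality of the $c_\mu$ and the eigenvalues, Galois-invariance of the coefficients as in Lemma~\ref{thm:coefficients-sum-map-bouquet}, identification of the limit set of the normalized sequence with $\Psi(H)$ for $H$ the closure of the cyclic group generated by $(\omega_{\mu_1},\dots,\omega_{\mu_s})$, and the split into the root-of-unity case (iii) versus the positive-dimensional case --- is sound and is the standard route.

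The genuine gap is exactly the step you flag and then defer: you never prove that $\Psi$ is nonconstant on some coset of the identity component $H_0$ when a dominant ratio $\omega_\mu$ is not a root of unity, nor do you dispose of the degenerate alternative (if $\Psi$ were constant on every coset, the limit set would be finite, but it is not shown to be of the form demanded in (iii), so the trichotomy would not yet follow). Appealing to \cite{BaBo} for precisely this point makes the argument circular as a proof of the cited theorem, and the mechanism you gesture at (``Kronecker-type rigidity for algebraic integers all of whose conjugates have a prescribed modulus'') is not the right tool, since the Galois conjugates of a dominant eigenvalue need not all have modulus $r(f)$. The cancellation worry can in fact be settled elementarily: group the dominant eigenvalues into classes whose mutual ratios are roots of unity. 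Distinct classes induce distinct characters on $H_0$ (the ratio character of two classes has infinite image on $H$, hence surjects onto the circle when restricted to $H_0$), so there is no interference between classes; within the class containing a non-root-of-unity ratio, the coefficient of its character on the coset $h^jH_0$ is, up to a nonzero unimodular factor, $\sum_i c_i\zeta_i^{\,j}$, and these cannot vanish for every residue $j$ modulo the least common multiple $m$ of the orders of the $\zeta_i$ (a Vandermonde/orthogonality argument, using that the $c_i$ are not all zero); finally $m$ divides the order of $H/H_0$, so all such residues are realized by actual cosets. Supplying this argument (or the corresponding one from \cite{BaBo} or \cite{JezMar}) is what is needed to turn your outline into a proof.
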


Fel'shtyn and Lee proved an analogous result for Nielsen numbers of a map
of an infra-solvmanifold of~type~$(R)$ \cite[Theorem 4.1]{FelsLee}.
Inspired by these, by \eqref{eq:synchornization=lefschetz}
and by the duality expressed in \eqref{eq:synchronization=reidemeister-of-dual}
we state the following

\begin{thm}\label{thm:synchronization-asymptotic-sequence}
  Let $f,g: X \to X$ be a synchronously tame pair of maps of a compact metric space $X$
  and let the synchronization zeta function $S_{f, g}(z)$ be rational.

  One of the three possibilities holds:
  \begin{enumerate}
    \item[\rm (i)] $\#S(f^k,g^k) = 0, k=1, 2, \dots$.
    \item[\rm (ii)] The sequence $\{ \# S(f^k, g^k) \ / \ \lambda(f, g)^k \}_{k=1}^\infty$ has the same
      limit points set as a periodic sequence $\{ \sum_{j=1}^{n(\varphi, \psi)} \alpha_j \epsilon_j^k \}_{k=1}^\infty$
      where $\alpha_j \in \Z, \ \epsilon_j \in \C$ and $\epsilon_j^q = 1$ for some integer $q>0$.
    \item[\rm (iii)] The set of limit points of the sequence $\{ \# S(f^k, g^k) \ / \ \lambda(f, g)^k \}_{k=1}^\infty$
      contains an interval.
  \end{enumerate}
\end{thm}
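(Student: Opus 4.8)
The plan is to deduce this trichotomy from the corresponding one for Lefschetz numbers of a self-map of a compact euclidean neighbourhood retract, i.e. from Theorem~\ref{thm:BaBo-trichotomy} of Babenko and Bogatyi. First I would apply Lemma~\ref{thm:rational-zeta-coefficients-sum}: rationality of $S_{f,g}(z)$ is equivalent to a representation $\#S(f^k,g^k)=\sum_{i=1}^{r}\chi_i\lambda_i^k$ with $\chi_i\in\Z$ and distinct algebraic integers $\lambda_i$, which we may take minimal (all $\chi_i\neq 0$, all $\lambda_i\neq 0$). By Lemma~\ref{thm:coefficients-sum-map-bouquet} this is in turn equivalent to the existence of a self-map $h\colon Y\to Y$ of a bouquet $Y$ of circles and $2$-spheres with $L(h^k)=\#S(f^k,g^k)$ for every $k\in\N$. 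The explicit construction behind Lemma~\ref{thm:coefficients-sum-map-bouquet} (via the companion matrices $M_\alpha$ and Lemma~\ref{thm:integer-matrix-map-bouquet}) shows that the nonzero eigenvalues of the maps induced by $h$ on reduced homology are precisely the $\lambda_i$, each with multiplicity $|\chi_i|$, sitting in even or odd degree according to the sign of $\chi_i$; the only extra eigenvalues are the zeros coming from the padding needed to reach $n_1\geq 2$ circles.

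Next I would match the invariants. Since the $\lambda_i$ are exactly the nonzero reduced-homology eigenvalues of $h$, the reduced (essential) spectral radius of $h$ equals $r(h)=\max_i|\lambda_i|=\lambda(f,g)$, and the number of distinct eigenvalues on the circle $|z|=r(h)$ equals $n(f,g)$ — here one first disposes of the degenerate case $\lambda(f,g)=0$, in which $\#S(f^k,g^k)=\sum_i\chi_i\lambda_i^k=0$ for all $k\geq 1$, alternative~(i). The zero eigenvalues from the padding are irrelevant to these quantities because $\lambda(f,g)>0$. With $h$ at hand, apply Theorem~\ref{thm:BaBo-trichotomy} to $h$: exactly one of its three alternatives holds, and since $L(h^k)=\#S(f^k,g^k)$ and $r(h)=\lambda(f,g)$, each transfers to the sequence $\{\#S(f^k,g^k)/\lambda(f,g)^k\}_{k\geq 1}$. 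Alternative~(i) of Theorem~\ref{thm:BaBo-trichotomy} yields alternative~(i) here; its alternative~(ii) (limit set containing an interval) yields alternative~(iii) here; its alternative~(iii) (same limit set as a periodic sequence $\sum_{\epsilon_i^m=1}A_i\epsilon_i^k$, $A_i\in\Z$) yields alternative~(ii) here, where the relevant $\epsilon_j$ are the unit-modulus ratios $\lambda_i/\lambda(f,g)$ — which are roots of unity in this case — the coefficients $\alpha_j$ are the associated integers $\chi_i$, and the number of such nonzero terms is $n(f,g)$. Thus the three cases are just Babenko--Bogatyi's trichotomy with their cases (ii) and (iii) interchanged to match the present ordering.

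The main obstacle is the bookkeeping in the second paragraph: one must check carefully that the map $h$ produced through Lemmas~\ref{thm:coefficients-sum-map-bouquet} and~\ref{thm:integer-matrix-map-bouquet} really has reduced-homology spectrum equal to the multiset $\{\lambda_i\text{ with multiplicity }|\chi_i|\}$ together with some zeros, so that $r(h)$ and the multiplicity on the spectral circle genuinely coincide with $\lambda(f,g)$ and $n(f,g)$ as defined here, and that the splitting into even/odd degree matches the signs of the $\chi_i$ (this uses that algebraically conjugate $\lambda_i$ share the same $\chi_i$, which is exactly what is established inside the proof of Lemma~\ref{thm:coefficients-sum-map-bouquet}). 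Once this identification is secured, the remainder is a direct translation of Theorem~\ref{thm:BaBo-trichotomy}.
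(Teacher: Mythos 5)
Your proposal is correct in outline but takes a genuinely different route from the paper. The paper does not invoke Theorem~\ref{thm:BaBo-trichotomy} as a black box: starting from $\# S(f^k,g^k)=\sum_i\chi_i\lambda_i^k$ (Lemma~\ref{thm:rational-zeta-coefficients-sum}), it first disposes of the case $\lambda(f,g)=0$, then writes the dominant eigenvalues as $\lambda_i=\lambda(f,g)\exp(2\pi\iota\theta_i)$, shows the normalized sequence has the same limit points as $\sum_{|\lambda_i|=\lambda(f,g)}\chi_i\exp(2\pi\iota k\theta_i)$, and splits according to whether all $\theta_i$ are rational (the periodic case (ii)) or some are irrational, in which case Kronecker's density theorem produces an interval of limit points (case (iii)). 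Your reduction through Lemma~\ref{thm:coefficients-sum-map-bouquet} and Lemma~\ref{thm:integer-matrix-map-bouquet} is legitimate --- the bouquet is a compact polyhedron, hence a compact ANR, and this is exactly the realization the paper itself exploits in Section~\ref{sec:congruences} --- but two details in your bookkeeping need care. First, the realized map's homology spectrum is not merely $\{\lambda_i\}$ with multiplicities $|\chi_i|$ plus zeros from padding: the construction behind Lemma~\ref{thm:integer-matrix-map-bouquet} necessarily carries an extra eigenvalue $1$ in odd degree cancelling the $H_0$ contribution, so the identification $r(h)=\lambda(f,g)$ relies on the reduced (essential) spectral radius being computed after even/odd cancellation; this is harmless (note also that $\lambda(f,g)\geq 1$ whenever it is nonzero, since conjugate $\lambda_i$ share the same $\chi_i$ and hence occur in full Galois orbits), but it should be stated. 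Second, the bare statement of Theorem~\ref{thm:BaBo-trichotomy} only supplies some periodic sequence $\sum_{\epsilon_i^m=1}A_i\epsilon_i^k$; to land on the specific form in (ii), with $n(f,g)$ terms, coefficients $\chi_i$ and $\epsilon_j=\lambda_i/\lambda(f,g)$, you need the dominant-term identification inside Babenko--Bogatyi's argument, which is precisely the computation the paper carries out directly. So the two proofs do essentially the same work: yours outsources the dichotomy between rational and irrational arguments of the dominant eigenvalues to the cited theorem, while the paper's version is self-contained and makes that dichotomy, and the use of Kronecker's theorem, explicit.
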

\begin{proof}
  The equality \eqref{eq:coincidence-sum}  $\# S(f^k, g^k) = \sum_{i=1}^{r} \chi_i \lambda_i^k$
  is a result of Lemma~\ref{thm:rational-zeta-coefficients-sum}.
  Observe that if $\lambda(f,g) = 0$ then obviously $\#S(f^k, g^k) = 0$ for all $k \in \N$.
  On the other hand, if $\#S(f^k, g^k) = 0$ for all $k \in \N$, then the synchrozniation zeta function
  $S_{f,g}(z) \equiv 1$. From the proof of Lemma~\ref{thm:rational-zeta-coefficients-sum} we know
  that every $\lambda_i$ is either a zero or a pole of the zeta function.
  The constant function has neither zeros nor poles, therefore $\lambda(f,g) = 0$.
  This is the case (i).

  Now assume that $\lambda(f,g) \neq 0$.
  We factor every $\lambda_i$ for which $|\lambda_i| = \lambda(\varphi, \psi)$
  in \eqref{eq:coincidence-sum} as
  $$\lambda_i = \lambda(\varphi, \psi) \exp(2\pi\iota\theta_i),$$
  where $\iota^2 = -1$ is the imaginary unit.
  Then
  \[
    \# S(f^k, g^k) = \lambda(f, g)^k
      \left( \sum_{|\lambda_i| = \lambda(f, g)} \chi_i \exp(2\pi\iota k \theta_i) \right)
      + \sum_{|\lambda_i| < \lambda(f, g)} \chi_i \lambda_i^k
  \]
  and
  \[
    \left| \frac{\# S(f^k, g^k)}{\lambda(f, g)^k}
      - \sum_{|\lambda_i| = \lambda(f, g)} \chi_i \exp(2\pi\iota k \theta_i) \right|
    \leq \sum_{|\lambda_i| < \lambda(f, g)} |\chi_i|
      \cdot \left| \frac{\lambda_i}{\lambda(f, g)} \right|^k.
  \]
  Right hand side goes to $0$ when $k \to \infty$ therefore the sequences
  \[
    \left\{ \# S(f^k, g^k) \ / \ \lambda(f, g)^k \right\}_{k=1}^\infty
    \mbox{\quad and \quad }
    \left\{ \sum_{|\lambda_i| = \lambda(f, g)} \chi_i \exp(2\pi\iota k \theta_i) \right\}_{k=1}^\infty
  \]
  have the same asymptotic behavior.
  Now we claim that the structure of the limit points set of the latter sequence
  depends on rationality of all $\theta_i$.

  Assume that for all $i = 1, \dots, n(f, g)$ we can write $\theta_i = p_i / q_i$ where $p_i, q_i \in \Z$.
  Put $q := \operatorname{lcm}(q_1, \dots, q_{n(f, g)})$.
  Whenever $k$ is a multiplicity of $q$, all $\exp(2\pi\iota k \theta_i) = 1$.
  This is the case (ii).

  Now assume that some of $\theta_i$ are irrational and denote the subset of their indices
  by $\mathcal{S} := \left\{ i_1, \dots, i_s \right\} \subset \{1, \dots, n(f, g)\}$.
  We can express the assumption as linear independence
  \[
    \left( \sum_{i \in \mathcal{S}} \alpha_i \theta_i = \alpha, \quad \alpha_i, \alpha \in \Z \right) \qquad
    \Longrightarrow \qquad \forall_i \ \alpha_i = \alpha = 0.
  \]
  Then the Kronecker theorem (see \cite[Theorem VIII.6]{Cha}) states that the sequence
  $\left\{ (k\theta_{i_1}, \dots, k\theta_{i_s}) \right\}_{k=1}^\infty$ is dense
  in the $s$-dimensional torus $\mathbb{T}^s$.

  Consider the continuous function
  \[
    h: \mathbb{T}^{n(f, g)} \to [0, \infty), \qquad
    h(\xi_1, \dots, \xi_{n(f, g)}) = \left| \sum_{i=1}^{n(f, g)} \chi_i \exp(2\pi\iota \xi_i) \right|.
  \]
  We can consider the subtorus
  \[
    \mathbb{T}_{\mathcal{S}} := \left\{ (\xi_1, \dots, \xi_{n(f, g)})
      \in \mathbb{T}^{n(f, g)} \mid \xi_j = 0, \forall j \notin \mathcal{S} \right\}
  \]
  For $\mathbb{T}_{\mathcal{S}}$ is compact and the restriction
  $h_\mathcal{S} := \left. h \right|_{\mathbb{T}_\mathcal{S}}$
  is continuous, there exists the maximum $m_\mathcal{S} := \max_{\mathbb{T}_{\mathcal{S}}} h_\mathcal{S}(\xi)$.
  Observe that the closure of the image of the dense sequence contains the interval
  \[
    \overline{h_\mathcal{S}\big( \left\{ k\theta_{i_1}, \dots, k\theta_{i_s} \right\}_k \big)}
    \ \supset \ [0, m_\mathcal{S}].
  \]
  This is the case (iii).
\end{proof}

%===================================================================================================
%===========================  Section  =============================================================
%===================================================================================================
\vspace{1cm}
\section{Connection with the Reidemeister Torsion} \label{sec:torsion}

\begin{lem}\label{red}
Let  $\phi, \psi : \Gamma\rightarrow \Gamma$ are two automorphisms.
Two elements $x,y$ of $\Gamma$ are $\psi^{-1}\phi$-conjugate if and only if
elements $\psi(x)$ and $\psi(y)$ are $(\psi,\phi)$-conjugate. Therefore the Reidemeister number
$R(\psi^{-1}\phi)$ is equal to $R(\phi,\psi)$.
For commuting automorphisms $\phi, \psi : \Gamma\rightarrow \Gamma$
 the coicidence Reidemeister zeta function $R_{\phi,\psi}(z)$ is equal to the Reidemeister zeta  function $ R_{\psi^{-1}\phi}(z)$.
\end{lem}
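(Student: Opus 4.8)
The plan is to prove the element-level correspondence first and then bootstrap it to the Reidemeister numbers and to the zeta function. For two automorphisms $\phi,\psi:\Gamma\to\Gamma$, recall that $x$ and $y$ are $\psi^{-1}\phi$-conjugate iff $y=g\,x\,(\psi^{-1}\phi)(g^{-1})$ for some $g\in\Gamma$. Applying the homomorphism $\psi$ to this identity and using $\psi\circ(\psi^{-1}\phi)=\phi$ yields $\psi(y)=\psi(g)\,\psi(x)\,\phi(g^{-1})$, which is precisely the statement that $\psi(x)$ and $\psi(y)$ are $(\phi,\psi)$-conjugate (in the notation of the definition, the map in the first slot sits on the twisted side and the one in the second slot on the left). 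For the converse, starting from $\psi(y)=\psi(h)\,\psi(x)\,\phi(h^{-1})$ I would rewrite $\phi(h^{-1})=\psi\big((\psi^{-1}\phi)(h^{-1})\big)$, so the right-hand side equals $\psi\big(h\,x\,(\psi^{-1}\phi)(h^{-1})\big)$; since $\psi$ is injective, stripping it off gives $y=h\,x\,(\psi^{-1}\phi)(h^{-1})$. Thus $x$ is $\psi^{-1}\phi$-conjugate to $y$ if and only if $\psi(x)$ is $(\phi,\psi)$-conjugate to $\psi(y)$.

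Second, I would promote this equivalence to a bijection of conjugacy classes. Since $\psi$ is an automorphism, $x\mapsto\psi(x)$ is a bijection of $\Gamma$; combined with the equivalence just established, it descends to a map from the set of $\psi^{-1}\phi$-conjugacy classes to the set of $(\phi,\psi)$-conjugacy classes which is well defined and injective (the two directions of the equivalence) and surjective (every element of $\Gamma$ is of the form $\psi(x)$). Hence $R(\psi^{-1}\phi)=R(\phi,\psi)$. If one wants the formulation with $(\psi,\phi)$-conjugacy exactly as in the statement, one notes in addition that $\alpha\mapsto\alpha^{-1}$ carries $(\phi,\psi)$-conjugacy to $(\psi,\phi)$-conjugacy, so $R(\phi,\psi)=R(\psi,\phi)$ anyway and the two formulations agree.

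Third, for commuting $\phi,\psi$ I would pass to iterates. Commutativity $\phi\psi=\psi\phi$ gives $\psi^{-1}\phi=\phi\psi^{-1}$, hence $(\psi^{-1}\phi)^n=\psi^{-n}\phi^n=(\psi^n)^{-1}\phi^n$ for every $n\in\N$, and $\phi^n,\psi^n$ are again automorphisms. Applying the first two steps to the pair $(\phi^n,\psi^n)$ gives $R\big((\psi^{-1}\phi)^n\big)=R(\phi^n,\psi^n)$ for all $n$. Since $R_{\psi^{-1}\phi}(z)=\exp\big(\sum_{n\geq1}\tfrac{1}{n}R((\psi^{-1}\phi)^n)\,z^n\big)$ and $R_{\phi,\psi}(z)=\exp\big(\sum_{n\geq1}\tfrac{1}{n}R(\phi^n,\psi^n)\,z^n\big)$ have identical coefficients term by term, the two zeta functions coincide.

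The computation itself is short; the genuine points requiring care are that the element-level equivalence must be verified in both directions — this is exactly where invertibility of $\psi$ is used, to peel off the outer $\psi$ and recover an identity in $\Gamma$ — and that the reduction to iterates truly needs $\phi\psi=\psi\phi$: without commutativity $(\psi^{-1}\phi)^n$ is not $\psi^{-n}\phi^n$ and the zeta-function identity breaks down. So the main obstacle is bookkeeping around these two observations rather than any serious difficulty.
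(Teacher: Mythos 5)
Your argument is correct and is essentially the paper's own proof: you apply $\psi$ to the twisted-conjugacy relation $y = g\,x\,(\psi^{-1}\phi)(g^{-1})$ to get the $(\phi,\psi)$-coincidence relation, and recover the converse by peeling off $\psi$ using its injectivity, exactly as the paper does. The extra bookkeeping you supply (the induced bijection of classes, the remark that inversion interchanges $(\phi,\psi)$- and $(\psi,\phi)$-classes, and the passage to iterates via $(\psi^{-1}\phi)^n=(\psi^n)^{-1}\phi^n$ under commutativity) is precisely what the paper leaves implicit, and it is handled correctly.
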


\begin{proof} If $x$ and $y$ are $\psi^{-1}\phi$-conjugate, then there is
a  $\gamma \in \Gamma$ such that $x=\gamma y \psi^{-1}\phi(\gamma^{-1})$.
This implies $\psi(x)=\psi(\gamma)\psi(y)\phi(\gamma^{-1})$. So
 $\psi(x)$ and  $\psi(y)$ are  $(\phi,\psi)$-conjugate.  The converse statement
follows if we move in opposite direction in previous implications.
\end{proof}

Let $X$ be a connected, compact polyhedron and let $h:X\rightarrow X$ be a continuous map.
The Lefschetz zeta function of a discrete dynamical system $h^n$ equals:
$$
L_h(z) := \exp\left(\sum_{n=1}^\infty \frac{L(h^n)}{n} z^n \right),
$$
where
\begin{equation*} %\label{Lef}
 L(h^n) := \sum_{k=0}^{\dim X} (-1)^k \tr\Big[h_{*k}^n:H_k(X;Q)\to H_k(X;Q)\Big]
\end{equation*}
is the Lefschetz number of the iteration $h^n$ of $h$. The Lefschetz zeta function is a rational function of $z$ and is given by the formula:
$$
L_h(z) = \prod_{k=0}^{\dim X}
          \det\big({I}-h_{*k}.z\big)^{(-1)^{k+1}}.
$$

In this section we consider finitely generated torsion free nilpotent group $\Gamma$. It is well known \cite{mal} that such group $\Gamma$ is a uniform discrete subgroup of a simply connected nilpotent Lie group $G$ (uniform means that the coset space $G/ \Gamma$ is compact). The coset space $M=G/ \Gamma$ is called a nilmanifold.
 Since $\Gamma=\pi_1(M)$ and $M$  is a $K(\Gamma,
1)$, every endomorphism $\phi:\Gamma \to \Gamma $ can be realized by a selfmap
$f:M\to M$ such that $f_*=\phi$ and thus $R(f)=R(\phi)$. Any endomorphism  $\phi:\Gamma \to \Gamma $ can be uniquely extended to an endomorphism  $F: G\to G$. Let $\tilde F:\tilde G\to \tilde G $ be the corresponding Lie algebra endomorphism induced from $F$.

\begin{lem}(\protect{cf. \cite[Theorem 23]{FelshB}}) \label{nilpot}
If $\Gamma$ is a finitely generated torsion free nilpotent group and $\phi$ an
endomorphism of $\Gamma$,
then the Reidemeister zeta function $R_f(z)=R_\phi(z)$
is rational function and is equal to
\begin{equation*}
 R_\phi(z)=R_f(z)=L_f((-1)^pz)^{(-1)^r}  ,
\end{equation*}
where  $p$ the number of $\mu\in Spectr(\tilde F)$ such that
$\mu <-1$, and $r$ the number of real eigenvalues of $\tilde F$ whose
absolute value is $>1$.
\end{lem}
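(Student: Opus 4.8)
The plan is to pass to the nilmanifold $M = G/\Gamma$, where $\phi$ is realized by a self-map $f\colon M \to M$ with $f_* = \phi$ as recalled above, and to express everything through the eigenvalues $\mu_1,\dots,\mu_d$ (with $d = \dim\tilde G$, listed with multiplicity) of the induced Lie algebra endomorphism $\tilde F\colon \tilde G \to \tilde G$. The first ingredient I would invoke is the Anosov-type theorem for nilmanifolds together with the index computation of \cite{FelshB}: for every $n$ with $R(\phi^n) < \infty$ one has
\[
  R(\phi^n) = R(f^n) = N(f^n) = |L(f^n)|, \qquad L(f^n) = \det(\Id - \tilde F^n) = \prod_{i=1}^{d}(1 - \mu_i^n),
\]
the key facts being that on a nilmanifold every fixed point class of $f^n$ carries index $\pm 1$ with one common sign (equal to $\operatorname{sign} L(f^n)$), and that $L(f^n) \neq 0$ for all $n$ precisely because the Reidemeister numbers are finite, i.e.\ no $\mu_i$ is a root of unity.

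Next I would carry out the sign bookkeeping relating $|L(f^n)|$ to $L(f^n)$. As $\tilde F$ is real, the non-real $\mu_i$ occur in conjugate pairs, and such a pair contributes $(1 - \mu_i^n)(1 - \overline{\mu_i}^{\,n}) = |1 - \mu_i^n|^2 > 0$, hence the same positive factor to $L(f^n)$ and to $|L(f^n)|$. A real $\mu_i$ with $|\mu_i| < 1$ gives $1 - \mu_i^n > 0$; a real $\mu_i > 1$ gives $1 - \mu_i^n < 0$ for all $n \geq 1$, and the $r - p$ such eigenvalues contribute a fixed sign $(-1)^{r-p}$; a real $\mu_i < -1$ gives $\operatorname{sign}(1 - \mu_i^n) = (-1)^{n+1}$, and the $p$ such eigenvalues contribute $\bigl((-1)^{n+1}\bigr)^p = \bigl((-1)^p\bigr)^n(-1)^p$. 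Multiplying the contributions gives $\operatorname{sign} L(f^n) = (-1)^r\bigl((-1)^p\bigr)^n$, so that
\[
  R(\phi^n) = |L(f^n)| = (-1)^r\bigl((-1)^p\bigr)^n L(f^n), \qquad n \in \N.
\]

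Finally I would substitute this into the zeta function, pulling the constant sign $(-1)^r$ out of $\exp$ as an exponent and absorbing $\bigl((-1)^p\bigr)^n$ into the variable:
\[
  R_\phi(z) = \exp\!\left(\sum_{n=1}^\infty \frac{(-1)^r\bigl((-1)^p\bigr)^n L(f^n)}{n}\, z^n\right) = \left[\exp\!\left(\sum_{n=1}^\infty \frac{L(f^n)}{n}\bigl((-1)^p z\bigr)^n\right)\right]^{(-1)^r} = L_f\bigl((-1)^p z\bigr)^{(-1)^r}.
\]
Rationality of $R_\phi(z)$ is then immediate from the explicit formula $L_h(z) = \prod_k \det(\Id - h_{*k}\,z)^{(-1)^{k+1}}$ recalled above, since a rational function rescaled by $z \mapsto (-1)^p z$ and raised to the power $\pm 1$ is again rational.

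The main obstacle I anticipate is not the sign algebra, which is routine, but the justification of the identity $R(\phi^n) = |L(f^n)|$: this rests on the fact that on a nilmanifold all fixed point classes share a single index $\pm 1$ — which is exactly where the nilpotent (Anosov) structure and the hypothesis $R(\phi^n) < \infty$ enter, and where one must make sure the degenerate case $L(f^n) = 0$ is ruled out.
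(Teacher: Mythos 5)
Your proposal is correct and follows essentially the same route as the paper: realize $\phi$ by a self-map $f$ of the nilmanifold $M=G/\Gamma$, use finiteness of $R(\phi^n)$ to ensure $L(f^n)\neq 0$, invoke the strengthened Anosov theorem to get $R(\phi^n)=N(f^n)=|L(f^n)|=|\det(\Id-\tilde F^n)|$, and determine the sign $(-1)^{r+pn}$ before substituting into the exponential. The only difference is that you write out explicitly the sign bookkeeping and the final ``direct calculation'' which the paper leaves to the reader.
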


\begin{proof}
Let $f:M\to M$ be a map realizing $\phi$ on a compact nilmanifold $M$.
We suppose  that the Reidemeister number  $R(f)=R(\phi)$ is finite.
The finiteness of $R(f)$ implies the nonvanishing of the Lefschetz number $L(f)$ \cite{fhw}.
A strengthened version of Anosov's theorem \cite{a}  states, in particular, that if
$L(f)\ne 0$ than $N(f)=|L(f)|=R(f)$.
It is well known that $L(f)=\det (\tilde F -1)$ \cite{a}.
Hence, for every $n\geq 1$
\begin{align*}
  R(\phi^n) &= R(f^n) = |L(f^n)| = |\det (1- \tilde F^n)| = \\
  &= (-1)^{r+pn}\det (1- \tilde F^n) = (-1)^{r+pn}L(f^n).
\end{align*}
Now the result follows  by direct calculation.
\end{proof}

Every pair  of automorphisms $\phi, \psi : \Gamma\rightarrow \Gamma$ of
finitely generated torsion free nilpotent group $\Gamma$ can be realized by a pair of homeomorphisms
$f,g :M\to M$ on a compact nilmanifold $M$ such that $f_*=\phi$ , $g_*=\psi$ and thus $R(g^{-1}f)=R(\psi^{-1}\phi)=R(\phi,\psi)$.
An automorphism  $\psi^{-1}\phi:\Gamma \to \Gamma $ can be uniquely extended to an automorphism
$L: G\to G$. Let $\tilde L:\tilde G\to \tilde G $ be the corresponding Lie algebra automorphism induced from $L$.

Using the proof of Theorem \ref{thm:synchronization-on-dual-of-virtually-polycyclic=reidemeister}
with Lemma \ref{red} and Lemma \ref{nilpot} we get the following

\begin{thm}\label{nil2}
Let  $\phi, \psi : \Gamma\rightarrow \Gamma$ be a pair of commuting automorphisms
of a finitely generated torsion free nilpotent group $\Gamma$.
Denote the space of unitary irreducible representations of $\Gamma$ by $\widehat{\Gamma}$,
its subspace of finite dimensional representations by $\widehat{\Gamma}_{fin}$
and dual maps induced by $\phi, \psi$ on the subspace $\widehat{\Gamma}_{fin}$ by $\widehat{\phi}_{fin}, \widehat{\psi}_{fin}$.
Then the synchronization zeta function and Reidemeister coincidence zeta function
$S_{\widehat{\phi}_{fin},\widehat{\psi}_{fin}}(z) = R_{\phi,\psi}(z)$ are rational and equal to
\begin{equation*}
  S_{\widehat{\phi}_{fin},\widehat{\psi}_{fin}}(z) = R_{\phi,\psi}(z) = R_{\psi^{-1}\phi}(z)
    = R_{g^{-1}f}(z) = L_{g^{-1}f}((-1)^pz)^{(-1)^r},
\end{equation*}
where  $p$ is the number of $\mu\in Spectr(\tilde L)$ such that
$\mu <-1$, and $r$ the number of real eigenvalues of $\tilde L$ whose
absolute value is $>1$.
\end{thm}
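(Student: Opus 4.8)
The plan is to concatenate the three results assembled just before the statement: Theorem~\ref{thm:synchronization-on-dual-of-virtually-polycyclic=reidemeister}, Lemma~\ref{red}, and Lemma~\ref{nilpot}. First I would record that a finitely generated torsion free nilpotent group $\Gamma$ is polycyclic, hence in particular torsion free virtually polycyclic, and that $\phi,\psi$ form a commuting pair of automorphisms; granting synchronous tameness of the pair (discussed at the end), Theorem~\ref{thm:synchronization-on-dual-of-virtually-polycyclic=reidemeister} applies directly and yields $\#S(\widehat{\phi}_{fin}^{\,n},\widehat{\psi}_{fin}^{\,n})=R(\phi^n,\psi^n)$ for every $n$, the identity $S_{\widehat{\phi}_{fin},\widehat{\psi}_{fin}}(z)=R_{\phi,\psi}(z)$, and the rationality of this common zeta function.

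Next I would convert the Reidemeister coincidence zeta function into an ordinary Reidemeister zeta function. Commutativity gives $(\psi^{-1}\phi)^n=\psi^{-n}\phi^n$, and Lemma~\ref{red} applied to each pair $(\phi^n,\psi^n)$ gives $R((\psi^{-1}\phi)^n)=R(\phi^n,\psi^n)$, whence $R_{\psi^{-1}\phi}(z)=R_{\phi,\psi}(z)$. Then I would realize $\phi,\psi$ by homeomorphisms $f,g\colon M\to M$ of the nilmanifold $M=G/\Gamma$ with $f_*=\phi$ and $g_*=\psi$, as recalled in the paragraphs preceding the theorem; since $(g^{-1}f)_*=\psi^{-1}\phi$, the map $g^{-1}f$ realizes $\psi^{-1}\phi$ on $M$, so $R_{g^{-1}f}(z)=R_{\psi^{-1}\phi}(z)$. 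Finally I would feed the automorphism $\psi^{-1}\phi$, realized by $g^{-1}f$, into Lemma~\ref{nilpot}: with $L\colon G\to G$ the unique extension of $\psi^{-1}\phi$ and $\tilde L$ the induced Lie algebra automorphism, this gives $R_{g^{-1}f}(z)=L_{g^{-1}f}((-1)^p z)^{(-1)^r}$ with $p$ the number of $\mu$ in the spectrum of $\tilde L$ with $\mu<-1$ and $r$ the number of real eigenvalues of $\tilde L$ of absolute value greater than $1$. Chaining the four equalities produces the claimed formula, and rationality is immediate from that of the Lefschetz zeta function.

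The main point requiring care --- essentially the only obstacle, since the rest is bookkeeping --- is to make sure that all the zeta functions in the chain are genuinely defined, i.e. that the sequences $\#S(\widehat{\phi}_{fin}^{\,n},\widehat{\psi}_{fin}^{\,n})$, $R(\phi^n,\psi^n)$, $R((\psi^{-1}\phi)^n)$ are finite and the Lefschetz numbers $L((g^{-1}f)^n)$ are nonzero for every $n$. By the identifications above this reduces to $R((\psi^{-1}\phi)^n)<\infty$ for all $n$, i.e. to synchronous tameness of $(\phi,\psi)$; via the twisted Burnside--Frobenius theorem (Theorem~\ref{thm:FelTro-7.8}) together with the Anosov-type equality $R((g^{-1}f)^n)=|\det(1-\tilde L^{\,n})|$ used inside the proof of Lemma~\ref{nilpot}, this is equivalent to no eigenvalue of $\tilde L$ being a root of unity, which I would either include among the hypotheses on $(\phi,\psi)$ or check is forced by synchronous tameness. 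Once this is granted the proof is a direct concatenation of the three cited results with no additional computation.
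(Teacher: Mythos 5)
Your proposal matches the paper's own argument, which is exactly the concatenation of Theorem~\ref{thm:synchronization-on-dual-of-virtually-polycyclic=reidemeister} (noting that a finitely generated torsion free nilpotent group is torsion free polycyclic, hence virtually polycyclic), Lemma~\ref{red}, the realization of $\phi,\psi$ by homeomorphisms $f,g$ of the nilmanifold $M=G/\Gamma$, and Lemma~\ref{nilpot} applied to $\psi^{-1}\phi$. Your closing remark about finiteness (synchronous tameness, equivalently no eigenvalue of $\tilde L$ being a root of unity) is a legitimate point of care that the paper leaves implicit, but it does not change the route.
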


\subsection{The Reidemeister Torsion as a special value of the coincidence Reidemeister zeta function and the synchronization zeta function}

Like the Euler characteristic, the Reidemeister torsion is algebraically defined.
 Roughly speaking, the Euler characteristic is a
graded version of the dimension, extending
the dimension from a single vector space to a complex
of vector spaces.
In a similar way, the Reidemeister torsion
is a graded version of the absolute value of the determinant
of an isomorphism of vector spaces.
Let $d^i:C^i\rightarrow C^{i+1}$ be a cochain complex $C^*$
of finite dimensional vector spaces over $\C$ with
$C^i=0$ for $i<0$ and large $i$.
If the cohomology $H^i=0$ for all $i$ we say that
$C^*$ is {\it acyclic}.
If one is given positive densities $\Delta_i$ on $C^i$
then the Reidemeister torsion $\tau(C^*,\Delta_i)\in(0,\infty)$
for acyclic $C^*$ is defined as follows:

\begin{dfn}
 Consider a chain contraction $\delta^i:C^i\rightarrow C^{i-1}$,
 ie. a linear map such that $d\circ\delta + \delta\circ d = id$.
 Then $d+\delta$ determines a map
 $ (d+\delta)_+ : C^+:=\oplus C^{2i}
                      \rightarrow C^- :=\oplus C^{2i+1}$
and a map
$ (d+\delta)_- : C^- \rightarrow C^+ $.
Since the map
$(d+\delta)^2 = id + \delta^2$ is unipotent,
$(d+\delta)_+$ must be an isomorphism.
One defines $\tau(C^*,\Delta_i):= \mid \det(d+\delta)_+\mid$
(see \cite{fri2}).
\end{dfn}

Reidemeister torsion is defined in the following geometric setting.
Suppose $K$ is a finite complex and $E$ is a flat, finite dimensional,
complex vector bundle with base $K$.
We recall that a flat vector bundle over $K$ is essentially the
same thing as a representation of $\pi_1(K)$ when $K$ is
connected.
If $p\in K$ is a basepoint then one may move the fibre at $p$
in a locally constant way around a loop in $K$. This
defines an action of $\pi_1(K)$ on the fibre $E_p$ of $E$
above $p$. We call this action the holonomy representation
$\rho:\pi\to GL(E_p)$.

Conversely, given a representation $\rho:\pi\to GL(V)$
of $\pi$ on a finite dimensional complex vector space $V$,
one may define a bundle $E=E_\rho=(\tilde{K}\times V) / \pi$.
Here $\tilde{K}$ is the universal cover of $K$, and
$\pi$ acts on $\tilde{K}$ by covering tranformations and on $V$
by $\rho$.
The holonomy of $E_\rho$ is $\rho$, so the two constructions
give an equivalence of flat bundles and representations of $\pi$.

If $K$ is not connected then it is simpler to work with
flat bundles. One then defines the holonomy as a
representation of the direct sum of $\pi_1$ of the
components of $K$. In this way, the equivalence of
flat bundles and representations is recovered.

Suppose now that one has on each fibre of $E$ a positive density
which is locally constant on $K$.
In terms of $\rho_E$ this assumption just means
$\mid\det\rho_E\mid=1$.
Let $V$ denote the fibre of $E$.
Then the cochain complex $C^i(K;E)$ with coefficients in $E$
can be identified with the direct sum of copies
of $V$ associated to each $i$-cell $\sigma$ of $K$.
 The identification is achieved by choosing a basepoint in each
component of $K$ and a basepoint from each $i$-cell.
By choosing a flat density on $E$ we obtain a
preferred density $\Delta_
i$ on $C^i(K,E)$. One defines the
R-torsion of $(K,E)$ to be
$\tau(K;E)=\tau(C^*(K;E),\Delta_i)\in(0,\infty)$.

\subsubsection[The synchronization zeta function and Reidemeister torsion]{The synchronization zeta function and the Reidemeister torsion of the mapping Torus.}

Let $h:X\rightarrow X$ be a homeomorphism of
a compact polyhedron $X$.
Let $T_h := (X\times I)/(x,0)\sim(h(x),1)$ be the
mapping torus of $h$.

We shall consider the bundle $p:T_h\rightarrow S^1$
over the circle $S^1$.
We assume here that $E$ is a flat, complex vector bundle with
finite dimensional fibre and base $S^1$. We form its pullback $p^*E$
over $T_h$.
 Note that the vector spaces $H^i(p^{-1}(b),c)$ with
$b\in S^1$ form a flat vector bundle over $S^1$,
which we denote $H^i F$. The integral lattice in
$H^i(p^{-1}(b),\R)$ determines a flat density by
the condition
that the covolume of the lattice is $1$.
We suppose that the bundle $E\otimes H^i F$ is acyclic for all
$i$. Under these conditions D. Fried \cite{fri2} has shown that the bundle
$p^* E$ is acyclic, and we have
\begin{equation*}
 \tau(T_h;p^* E) = \prod_i
 \tau(S^1;E\otimes H^i F)^{(-1)^i}.
\end{equation*}
Let $g$ be the preferred generator of the group
$\pi_1 (S^1)$ and let $A=\rho(g)$ where
$\rho:\pi_1 (S^1)\rightarrow GL(V)$.
Then the holonomy around $g$ of the bundle $E\otimes H^i F$
is $A\otimes h^*_i$.
Since $\tau(E)=\mid\det(I-A)\mid$ it follows from (16)
that
\begin{equation*}
 \tau(T_h;p^* E) = \prod_i \mid\det(I-A\otimes h^*_i)\mid^{(-1)^i}.
\end{equation*}
We now consider the special case in which $E$ is one-dimensional,
so $A$ is just a complex scalar $\lambda$ of modulus one.
 Then in terms of the rational Lefschetz zeta function $L_h(z)$ we have \cite{fri2}:
\begin{equation}\label{eq:tor}
 \tau(T_h;p^* E) = \prod_i \mid\det(I-\lambda .h^*_i)\mid^{(-1)^i}
             = \mid L_h(\lambda)\mid^{-1}.
\end{equation}

Every pair of automorphisms $\phi, \psi : \Gamma\rightarrow \Gamma$ of
finitely generated torsion free nilpotent group $\Gamma$ can be realized by a pair of homeomorphisms
$f,g :M\to M$ on a compact nilmanifold $M$ with the fundamental group $ \Gamma $ such that $f_*=\phi$ , $g_*=\psi$.
From  formula \ref{eq:tor} and Theorem \ref{nil2}
we have  the following

\begin{thm}\label{torsion}
Let  $\phi, \psi : \Gamma\rightarrow \Gamma$ be a pair  of commuting automorphisms
of a finitely generated torsion free nilpotent group $\Gamma$.
Denote the space of unitary irreducible representations of $\Gamma$ by $\widehat{\Gamma}$,
its subspace of finite dimensional representations by $\widehat{\Gamma}_{fin}$
and dual maps induced by $\phi, \psi$ on the subspace $\widehat{\Gamma}_{fin}$ by $\widehat{\phi}_{fin}, \widehat{\psi}_{fin}$.
Then one has
\begin{align*}
 \tau\left(T_{g^{-1}f};p^*E\right)
   &= |L_{g^{-1}f}(\lambda)|^{-1}
    = |R_{g^{-1}f}((-1)^p\lambda)|^{(-1)^{r+1}} = \\
   &= |R_{\psi^{-1}\phi}((-1)^p\lambda)|^{(-1)^{r+1}}
    = |R_{\phi,\psi}((-1)^p\lambda)|^{(-1)^{r+1}} = \\
   &= |S_{\widehat{\phi}_{fin},\widehat{\psi}_{fin}}((-1)^p\lambda)|^{(-1)^{r+1}},
\end{align*}
where $\lambda$ is the holonomy of the one-dimensional
flat complex bundle $E$ over $S^1$, $r$ and $p$ are the constants described in {\rm Theorem \ref{nil2}}.
\end{thm}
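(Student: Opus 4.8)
The plan is to assemble the asserted chain of equalities from results already established, the single new ingredient being Fried's torsion formula \eqref{eq:tor}. First I would realize the commuting pair $\phi,\psi$ by a pair of commuting homeomorphisms $f,g\colon M\to M$ of a compact nilmanifold $M$ with $\pi_1(M)=\Gamma$, $f_*=\phi$, $g_*=\psi$, and set $h:=g^{-1}f$, a homeomorphism of $M$ with $h_*=\psi^{-1}\phi$. Since $M$ is a $K(\Gamma,1)$, each iterate $h^n$ realizes $(\psi^{-1}\phi)^n$, so $R(h^n)=R((\psi^{-1}\phi)^n)$ for all $n$ and hence $R_{g^{-1}f}(z)=R_{\psi^{-1}\phi}(z)$. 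Applying the one-dimensional case of Fried's formula \eqref{eq:tor} to $h=g^{-1}f$ and the flat complex line bundle $E$ over $S^1$ with holonomy $\lambda$, $|\lambda|=1$ (so that $A$ is the scalar $\lambda$), yields, under the acyclicity hypothesis, the first equality $\tau(T_{g^{-1}f};p^*E)=|L_{g^{-1}f}(\lambda)|^{-1}$.

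Next I would invoke Lemma \ref{nilpot} applied to the automorphism $\psi^{-1}\phi$ as realized by $g^{-1}f$: with $p$ the number of $\mu\in Spectr(\tilde L)$ satisfying $\mu<-1$ and $r$ the number of real eigenvalues of $\tilde L$ of absolute value $>1$, one has $R_{g^{-1}f}(z)=L_{g^{-1}f}((-1)^pz)^{(-1)^r}$. Evaluating this identity at $z=(-1)^p\lambda$ and using $(-1)^p(-1)^p=1$ gives $R_{g^{-1}f}((-1)^p\lambda)=L_{g^{-1}f}(\lambda)^{(-1)^r}$; taking absolute values and raising to the power $(-1)^{r+1}$ produces $|R_{g^{-1}f}((-1)^p\lambda)|^{(-1)^{r+1}}=|L_{g^{-1}f}(\lambda)|^{(-1)^r(-1)^{r+1}}=|L_{g^{-1}f}(\lambda)|^{-1}$, which matches the first equality.

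It remains to extend the chain to the synchronization zeta function. From the $K(\Gamma,1)$ observation above, $R_{g^{-1}f}(z)=R_{\psi^{-1}\phi}(z)$; by Lemma \ref{red}, valid precisely because $\phi$ and $\psi$ commute, $R_{\psi^{-1}\phi}(z)=R_{\phi,\psi}(z)$; and by Theorem \ref{nil2} (a finitely generated torsion free nilpotent group is virtually polycyclic, so Theorem \ref{thm:synchronization-on-dual-of-virtually-polycyclic=reidemeister} applies and gives the rationality together with this identity), $R_{\phi,\psi}(z)=S_{\widehat{\phi}_{fin},\widehat{\psi}_{fin}}(z)$. Substituting the argument $(-1)^p\lambda$, taking absolute values, and raising to the power $(-1)^{r+1}$ throughout transports all of these equalities to the corresponding special values, which completes the chain.

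The work here is not a real obstacle but rather careful bookkeeping: one must keep straight the argument shift $z\mapsto(-1)^pz$ coming from Lemma \ref{nilpot} and the flip of the exponent $(-1)^r$ into $(-1)^{r+1}$ that occurs when passing from $L_{g^{-1}f}$ to $\tau=|L_{g^{-1}f}|^{-1}$, and one must ensure $\lambda$ is chosen so that the bundles $E\otimes H^iF$ appearing in \eqref{eq:tor} are acyclic --- equivalently $L_{g^{-1}f}(\lambda)\neq0$ and finite --- so that every special value in the statement is a well-defined element of $(0,\infty)$. Once these points are settled the proof is a direct concatenation of \eqref{eq:tor}, Lemma \ref{red}, Lemma \ref{nilpot}, and Theorem \ref{nil2}.
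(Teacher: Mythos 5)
Your proposal is correct and follows essentially the same route as the paper: the paper's own (very brief) argument is exactly the concatenation of Fried's formula \eqref{eq:tor} applied to $h=g^{-1}f$ with the chain of zeta-function identities in Theorem \ref{nil2} (itself built from Lemma \ref{red}, Lemma \ref{nilpot}, and the realization of $\phi,\psi$ on a nilmanifold), and your exponent bookkeeping $|R_{g^{-1}f}((-1)^p\lambda)|^{(-1)^{r+1}}=|L_{g^{-1}f}(\lambda)|^{-1}$ matches. Your added remark on the acyclicity of $E\otimes H^iF$ (equivalently $L_{g^{-1}f}(\lambda)\neq 0$) is a reasonable point of care that the paper leaves implicit.
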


%===================================================================================================
%===========================  Bibliography  ========================================================
%===================================================================================================
\vspace{1cm}

\end{document}